\newtheorem{theorem}{Theorem}[section]
\newtheorem{lemma}[theorem]{Lemma}
\newtheorem{proposition}[theorem]{Proposition}
\theoremstyle{definition}
\newtheorem{example}[theorem]{Example}
\newtheorem{conjecture}[theorem]{Conjecture}
\theoremstyle{remark}
\numberwithin{equation}{section}
\begin{document}

\title[Algebraic Montgomery-Yang Problem]{Algebraic Montgomery-Yang Problem: the non-cyclic case}

\author{DongSeon Hwang}
\address{Department of Mathematical Sciences, KAIST, Daejon, Korea}
\curraddr{School of Mathematics, Korea Institute For Advanced
Study, Seoul 130-722, Korea} \email{dshwang@kias.re.kr}
\thanks{Research supported by Basic Science Research Program through the National Research Foundation(NRF)
of Korea funded by the Ministry of education, Science and
Technology (2010-0001652).}

\author{JongHae Keum}
\address{School of Mathematics, Korea Institute For Advanced Study, Seoul 130-722, Korea}
\email{jhkeum@kias.re.kr}

\subjclass[2000]{Primary 14J17}



\keywords{Algebraic Montgomery-Yang Problem, $\mathbb{Q}$-homology
projective plane, quotient singularity, orbifold
Bogomolov-Miyaoka-Yau inequality}

\begin{abstract}
Montgomery-Yang problem predicts that every pseudofree circle
action on the 5-dimensional sphere has at most $3$ non-free
orbits. Using a certain one-to-one correspondence, Koll\'ar
formulated the algebraic version of the Montgomery-Yang problem:
every projective surface $S$ with quotient singularities such that
the second Betti number $b_2(S) = 1$ has at most $3$ singular
points if its smooth locus $S^0$ is simply connected.

We prove the conjecture under the assumption that $S$ has at least
one non-cyclic singularity. In the course of the proof, we
classify projective surfaces $S$ with quotient singularities such
that (i) $b_2(S) = 1$, (ii) $H_1(S^0, \mathbb{Z}) = 0$, and (iii)
$S$ has 4 or more singular points, not all cyclic, and prove that
all such surfaces have $\pi_1(S^0)\cong \mathfrak{A}_5$, the
icosahedral group.
\end{abstract}

\maketitle



\section{Introduction}

A pseudofree $\mathbb{S}^1$-action on a sphere $\mathbb{S}^{2k-1}$
is a smooth $\mathbb{S}^1$-action which is free except for
finitely many non-free orbits (whose isotropy types
$\mathbb{Z}_{m_1}, \ldots, \mathbb{Z}_{m_n}$ have pairwise
relatively prime orders).

For $k=2$ Seifert \cite{Sei} showed that such an action must be
linear and hence has at most two non-free orbits. In the contrast
to this, for $k=4$ Montgomery and Yang \cite{MY} showed that given
any pairwise relatively prime collection of positive integers
$m_1,\ldots, m_n$, there is a pseudofree $\mathbb{S}^1$-action on
homotopy 7-sphere whose non-free orbits have exactly those orders.
Petrie \cite{Pet} proved similar results in all higher odd
dimensions. This led Fintushel and Stern to formulate the
following problem:

\begin{conjecture} [\cite{FS87}](Montgomery-Yang Problem) \\
{\it Let $$\mathbb{S}^1 \times \mathbb{S}^5 \rightarrow
\mathbb{S}^5$$ be a pseudofree $\mathbb{S}^1$-action.
 Then it has at most $3$ non-free orbits.}
\end{conjecture}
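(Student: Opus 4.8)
The plan is to route the conjecture through Koll\'ar's dictionary between pseudofree circle actions and complex surfaces and then to resolve the resulting surface problem. First I would make the reduction precise. A pseudofree $\mathbb{S}^1$-action on $\mathbb{S}^5$ realizes $\mathbb{S}^5$ as the total space of a Seifert $\mathbb{S}^1$-bundle over the quotient $S = \mathbb{S}^5/\mathbb{S}^1$, a projective surface with quotient singularities, and the non-free orbits correspond bijectively to the singular points of $S$. Near a non-free orbit with isotropy $\mathbb{Z}_m$ the slice representation makes $S$ locally $\mathbb{C}^2/\mathbb{Z}_m$, so every such singularity is cyclic. Since $\mathbb{S}^5$ is a homology sphere, $S$ is a rational homology projective plane ($b_2(S)=1$, hence $e(S)=3$), and $\pi_1(\mathbb{S}^5)=1$ forces $\pi_1(S^0)=1$ on the smooth locus. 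Thus the conjecture is equivalent to the statement that every rational homology projective plane with only cyclic quotient singularities and simply connected smooth locus has at most $3$ singular points.

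Since this is exactly the cyclic part of Koll\'ar's general algebraic Montgomery-Yang problem, it suffices in principle to settle the all-cyclic case. To fix the strategy I first record that the complementary branch is already controlled: if $S$ has at least one noncyclic singularity and four or more singular points, the classification theorem of the present paper gives $\pi_1(S^0)\cong\mathfrak{A}_5$, and since $\mathfrak{A}_5\neq 1$ the hypothesis $\pi_1(S^0)=1$ excludes this branch outright. (For the differentiable conjecture this branch never arises, because the slice analysis above produces only cyclic singularities; it is needed only for Koll\'ar's wider formulation.) Everything therefore reduces to bounding the number of cyclic singular points of $S$.

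The main obstacle is this all-cyclic case. Here I would run the same type of machinery that underlies the noncyclic classification, but with cyclic local groups, which are far less rigid. Concretely, I would pass to the minimal resolution $\tilde{S}\to S$, so that the exceptional locus over each point is a Hirzebruch-Jung string of rational curves, and then exploit three kinds of constraints at once: the numerical constraints from $b_2(S)=1$ together with the Enriques-Kodaira type of $\tilde{S}$; the orbifold Bogomolov-Miyaoka-Yau inequality $K_S^2 \le 3\,e_{\mathrm{orb}}(S)$ with $e_{\mathrm{orb}}(S)=3-\sum_i(1-1/|G_i|)$, which bounds both the number of singular points and the orders $|G_i|$; and a presentation of $\pi_1(S^0)$ via the orbifold fundamental-group sequence, in which $\pi_1(S^0)=1$ becomes a rigid system of relations among the cyclic local groups, with the pairwise coprimality of the isotropy orders as a further strong input. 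I expect the crux to be that, unlike the noncyclic case where the binary icosahedral structure pins down $\pi_1(S^0)\cong\mathfrak{A}_5$ almost immediately, cyclic configurations leave many numerically admissible possibilities, so the BMY inequality alone does not close the argument; one must combine it with a delicate divisibility analysis of the Seifert invariants and with the simple-connectivity relations to eliminate, configuration by configuration, every case with four or more cyclic singular points. Proving that no admissible all-cyclic configuration survives all three constraints simultaneously is, I expect, the principal difficulty and the part not settled by the results quoted above.
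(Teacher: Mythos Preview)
The statement you are attempting to prove is a \emph{conjecture} in the paper, not a theorem; the paper does not contain a proof of it. What the paper actually proves is the \emph{algebraic} Montgomery--Yang problem in the noncyclic case (Theorem~\ref{main} and the classification Theorem~\ref{class}), and it explicitly records that the differentiable Montgomery--Yang problem ``has remained unsolved since its formulation.'' So there is no paper proof to compare against; I can only assess whether your proposal stands on its own.

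It does not, for two independent reasons. First, your opening reduction is invalid as stated. You assert that the quotient $S=\mathbb{S}^5/\mathbb{S}^1$ is ``a projective surface with quotient singularities,'' and then proceed to apply the algebraic machinery (orbifold BMY, minimal resolutions, Hirzebruch--Jung strings). But for a smooth pseudofree $\mathbb{S}^1$-action the quotient is only a smooth $4$-orbifold; nothing in the hypotheses furnishes a complex, let alone projective, structure on it. Koll\'ar's dictionary (the paper's Theorem~1.2) matches such actions with smooth compact $4$-manifolds with lens-space boundary, not with algebraic surfaces, and his Conjecture~1.3 is explicitly an algebraic \emph{analogue}, not an equivalent reformulation. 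So solving the algebraic problem would not, by itself, settle Conjecture~1.1.

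Second, even within the algebraic framework your proposal does not close the all-cyclic case; you yourself flag it as ``the principal difficulty and the part not settled by the results quoted above.'' The sketch you give (BMY plus coprimality plus an orbifold-$\pi_1$ presentation) is the natural toolkit, but it is well known that these constraints leave infinitely many numerically admissible cyclic configurations with four singular points, and no argument is offered that eliminates them. In short, the noncyclic branch is indeed handled by the paper's classification (your observation that $\pi_1(S^0)\cong\mathfrak{A}_5\neq 1$ is exactly how Theorem~\ref{main} follows from Theorem~\ref{class}), but both the passage from the differentiable to the algebraic problem and the all-cyclic algebraic case remain genuine gaps.
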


The problem has remained unsolved since its formulation.

Pseudofree $\mathbb{S}^1$-actions on 5-manifolds $L$ have been
studied in terms of the 4-dimensional quotient orbifold
$L/\mathbb{S}^1$ (see e.g., \cite{FS85}, \cite{FS87}). A manifold
is called a \emph{ rational homology sphere} if it has the same
$\mathbb{Q}$-homology groups with a sphere, i.e., it has the same
Betti numbers with a sphere. The following one-to-one
correspondence was known to Montgomery, Yang, Fintushel and Stern,
and recently observed by Koll\'ar (\cite{Kol05}, \cite{Kol08}):

\begin{theorem}[cf. \cite{Kol05}, \cite{Kol08}]\label{1-1}
There is a one-to-one correspondence between:
\begin{enumerate}
\item Pseudofree $\mathbb{S}^1$-actions on $5$ dimensional
rational homology spheres $L$ with $H_1(L, \mathbb{Z}) = 0$.
\item Smooth, compact $4$-manifolds $M$ with boundary such that
\begin{enumerate}
\item $\partial{M} = \cup_i L_i$ is a disjoint union of lens spaces $L_i = \mathbb{S}^3/\mathbb{Z}_{m_i}$,
\item the $m_i$ are relatively prime to each other,
\item $H_1(M, \mathbb{Z}) = 0$ and $H_2(M, \mathbb{Z}) \cong \mathbb{Z}$.
\end{enumerate}
\end{enumerate}
Furthermore, $L$ is diffeomorphic to $\mathbb{S}^5$ iff $\pi_1(M)
= 1$.
\end{theorem}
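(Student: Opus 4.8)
The plan is to realize the correspondence geometrically, by passing from an $\mathbb{S}^1$-space to the quotient of its free part and back, and then reading off the homological conditions from a combined Mayer--Vietoris and Gysin analysis.

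\emph{From an action to a $4$-manifold.} Suppose $\mathbb{S}^1$ acts pseudofreely on $L$, with non-free orbits $C_1,\dots,C_n$ of isotropy orders $m_1,\dots,m_n$. By the equivariant slice theorem, each $C_i$ has an invariant tubular neighborhood equivariantly diffeomorphic to $V_i=\mathbb{S}^1\times_{\mathbb{Z}_{m_i}}D^4$, where $\mathbb{Z}_{m_i}\subset\mathbb{S}^1$ acts linearly on $D^4=\mathbb{C}^2$ with a pair of weights prime to $m_i$; pseudofreeness enters here, since a nearby orbit must be free, forcing both weights to be units. Set $W:=L\setminus\bigcup_i\mathrm{int}\,V_i$ and $M:=W/\mathbb{S}^1$. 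As $\mathbb{S}^1$ acts freely on $W$, the quotient $W\to M$ is a principal $\mathbb{S}^1$-bundle over a smooth compact oriented $4$-manifold, and $\partial W=\bigsqcup_i\partial V_i$ with $\partial V_i=\mathbb{S}^1\times_{\mathbb{Z}_{m_i}}\mathbb{S}^3$; since $\mathbb{S}^1$ acts freely through the first factor, $\partial V_i/\mathbb{S}^1=\mathbb{S}^3/\mathbb{Z}_{m_i}$ is a lens space $L_i$, so $\partial M=\bigsqcup_iL_i$. This is condition (a).

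\emph{The homological conditions.} The radial contraction of $D^4$ makes each inclusion $\partial V_i\hookrightarrow V_i$ a $\pi_1$-isomorphism: both spaces have infinite cyclic fundamental group, mapping isomorphically onto $\pi_1$ of the core circle $\mathbb{S}^1/\mathbb{Z}_{m_i}$. Hence in the Mayer--Vietoris sequence of the decomposition $L=W\cup\bigcup_iV_i$ the contributions of the $V_i$ are absorbed, and one gets $H_1(L;\mathbb{Z})\cong H_1(W;\mathbb{Z})$ and, over $\mathbb{Q}$, $H_k(L;\mathbb{Q})\cong H_k(W;\mathbb{Q})$ for $k=1,2$. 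Combining this with the Gysin sequence of $W\to M$ and the hypothesis that $L$ is a rational homology $5$-sphere gives $b_1(M)=0$ and $b_2(M)=1$ (equivalently, the quotient orbifold $L/\mathbb{S}^1$ is a rational homology $\mathbb{CP}^2$), while $H_1(L;\mathbb{Z})=0$ forces $H_1(M;\mathbb{Z})=0$ and makes the Euler class of $W\to M$ primitive. Now Poincar\'e--Lefschetz duality identifies $H^2(M,\partial M)$ with $H_2(M)$, and $H^2(M,\partial M)$ is free because $H_1(M,\partial M)$ is free abelian; so $H_2(M;\mathbb{Z})$ is free of rank $b_2(M)=1$, i.e.\ $H_2(M;\mathbb{Z})\cong\mathbb{Z}$, which is condition (c). Since $H_1(M;\mathbb{Z})=0$, universal coefficients give $H^2(M;\mathbb{Z})\cong\mathbb{Z}$, so the cohomology long exact sequence of $(M,\partial M)$ — using $H^1(\partial M)=0=H^3(M,\partial M)$, valid as $\partial M$ is a union of lens spaces — collapses to $0\to\mathbb{Z}\to\mathbb{Z}\to\bigoplus_i\mathbb{Z}_{m_i}\to0$. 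As $\bigoplus_i\mathbb{Z}_{m_i}$ is then a quotient of $\mathbb{Z}$, it is cyclic, which forces the $m_i$ to be pairwise coprime — condition (b). The same sequence shows that $H^2(M;\mathbb{Z})\to\bigoplus_iH^2(L_i;\mathbb{Z})$ is surjective, which is used below.

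\emph{From a $4$-manifold to an action.} Conversely, let $M$ satisfy (a)--(c). Each boundary lens space $L_i$ carries the canonical principal $\mathbb{S}^1$-bundle $\mathbb{S}^1\times_{\mathbb{Z}_{m_i}}\mathbb{S}^3\to\mathbb{S}^3/\mathbb{Z}_{m_i}$, whose Euler class generates $H^2(L_i;\mathbb{Z})=\mathbb{Z}_{m_i}$. Since $H_1(M;\mathbb{Z})=0$ and $H_2(M;\mathbb{Z})\cong\mathbb{Z}$ we have $H^2(M;\mathbb{Z})\cong\mathbb{Z}$; let $W\to M$ be the principal $\mathbb{S}^1$-bundle whose Euler class is a generator. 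By the surjectivity just noted together with the coprimality of the $m_i$, the restriction $W|_{L_i}$ has Euler class a generator of $H^2(L_i;\mathbb{Z})$, hence is the canonical bundle above. Now form $L:=W\cup_{\bigsqcup_i\partial V_i}\bigsqcup_iV_i$, a closed oriented $5$-manifold carrying an $\mathbb{S}^1$-action — free on $W$, the standard action on each $V_i$, compatible on the overlaps — whose only non-free orbits are the cores of the $V_i$, of orders $m_i$; coprimality makes the action pseudofree. Running the Mayer--Vietoris and Gysin computations in reverse, using $b_1(M)=0$, $b_2(M)=1$, coprimality and the primitivity of the Euler class, shows that $L$ is a rational homology $5$-sphere with $H_1(L;\mathbb{Z})=0$. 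Finally, one verifies that the two assignments are mutually inverse up to the natural notions of isomorphism; this is formal, given the uniqueness clause of the slice theorem and the classification of principal $\mathbb{S}^1$-bundles over $M$ with prescribed boundary behavior.

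\emph{The last assertion, and the main obstacle.} Because each $\partial V_i\hookrightarrow V_i$ is a $\pi_1$-isomorphism, van Kampen gives $\pi_1(L)\cong\pi_1(W)$, and the homotopy exact sequence of the circle bundle $\mathbb{S}^1\to W\to M$ presents $\pi_1(W)$ as a central extension of $\pi_1(M)$ by a cyclic group. If $\pi_1(M)=1$, then $\pi_1(L)\cong\pi_1(W)$ is cyclic, hence abelian, hence equal to $H_1(L;\mathbb{Z})=0$; the same Mayer--Vietoris/Gysin analysis also forces $H_2(L;\mathbb{Z})=0$, so $L$ is a simply connected homology $5$-sphere, i.e.\ a homotopy $5$-sphere. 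It is therefore homeomorphic to $\mathbb{S}^5$ by the generalized Poincar\'e conjecture, and in fact diffeomorphic since $\Theta_5=0$. Conversely, if $L\cong\mathbb{S}^5$ then $\pi_1(M)$, being a quotient of $\pi_1(L)=1$, is trivial. I expect the substantive work to be concentrated in the homological paragraph: establishing $H_1(L;\mathbb{Z})\cong H_1(W;\mathbb{Z})$ and then threading the orders $m_i$ and the Euler class through the Mayer--Vietoris, Gysin, Lefschetz-duality and universal-coefficient sequences so as to extract conditions (b) and (c) precisely. The only genuinely geometric input is the $\pi_1$-triviality of the pairs $(V_i,\partial V_i)$, and this is exactly where pseudofreeness — the slice weights being prime to $m_i$ — is indispensable; the remainder is bookkeeping.
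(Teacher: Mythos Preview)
The paper does not prove this theorem. It is stated in the introduction as background, attributed to Montgomery, Yang, Fintushel, Stern, and Koll\'ar (with references to \cite{Kol05}, \cite{Kol08}), and used only to motivate the algebraic Montgomery--Yang problem. There is therefore no ``paper's own proof'' to compare your proposal against.

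That said, your outline is essentially the standard argument and is broadly sound. A few remarks. First, your assertion that $H_1(M,\partial M)$ is free abelian is correct but deserves a word of justification: it follows from the exact sequence of the pair, since $H_1(M)=0$ forces $H_1(M,\partial M)\hookrightarrow H_0(\partial M)\cong\mathbb{Z}^n$. Without this, the step ``$H^2(M,\partial M)$ is free, hence $H_2(M)$ is free'' would be circular. Second, in the converse construction you should be slightly more careful about matching the restricted bundle $W|_{L_i}$ with the boundary of the equivariant $4$-disk $V_i$: the lens space $L_i=L(m_i,q_i)$ records a specific $\mathbb{Z}_{m_i}$-action on $S^3$, and one needs that for any generator of $H^2(L_i;\mathbb{Z})$ there is a choice of slice weights realizing that Euler class with the given quotient lens space. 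This is true and elementary (it amounts to the classification of Seifert invariants over a disk), but it is the point where the Seifert data of the action is reconstructed and is worth making explicit. Third, the paper's definition of \emph{pseudofree} already includes pairwise coprimality of the isotropy orders, so in the forward direction condition (b) is immediate; your derivation of coprimality from the cohomology sequence is nonetheless valid and is what is needed in the other direction.
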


We recall that a normal projective surface with the same Betti
numbers with $\mathbb{C}\mathbb{P}^2$ is called a \emph{rational
homology projective plane} or a \emph{$\mathbb{Q}$-homology
projective plane} or a \emph{$\mathbb{Q}$-homology
$\mathbb{C}\mathbb{P}^2$}. When a normal projective surface $S$
has quotient singularities only, $S$ is a $\mathbb{Q}$-homology
$\mathbb{C}\mathbb{P}^2$ if the second Betti number $b_2(S)=1$.

It is known that a $\mathbb{Q}$-homology projective plane with
quotient singularities has at most 5 singular points (cf.
\cite{HK} Corollary 3.4). Recently, the authors have classified
$\mathbb{Q}$-homology projective planes with 5 quotient
singularities (\cite{HK}, also see \cite{K10}).

Using the one-to-one correspondence of Theorem \ref{1-1}, Koll\'ar
formulated the algebraic version of the Montgomery-Yang problem as
follows:

\begin{conjecture}[\cite{Kol08}]\label{amy} (Algebraic Montgomery-Yang Problem)\\
{\it Let $S$ be a $\mathbb{Q}$-homology projective plane with
quotient singularities. Assume that $S^0:=S \backslash Sing(S)$ is
simply connected. Then $S$ has at most $3$ singular points.}
\end{conjecture}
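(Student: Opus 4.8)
The plan is to prove the conjecture under the extra hypothesis actually made in this paper, namely that $S$ carries at least one \emph{noncyclic} quotient singularity; equivalently, to classify all rational homology projective planes $S$ with quotient singularities satisfying $H_1(S^0,\mathbb Z)=0$ and $\#\mathrm{Sing}(S)\ge 4$, not all singular points cyclic, and to show that every such $S$ has $\pi_1(S^0)\cong\mathfrak A_5$. Since $\mathfrak A_5\ne 1$, no such $S$ has $S^0$ simply connected, and the conjecture follows in the noncyclic case.

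First I would set up the standard machinery. Let $f\colon\widetilde S\to S$ be the minimal resolution, with reduced exceptional divisor $E=\sum_i E_i$ the disjoint union of the dual graphs of the singular points — chains for the cyclic ones, star-shaped $D$- or $E$-type graphs for the noncyclic ones. A short argument disposes of the case $K_S\equiv 0$ (log Enriques) and of any non-rational possibility when $\#\mathrm{Sing}(S)\ge 4$, so $\widetilde S$ is a smooth rational surface; hence $\pi_1(\widetilde S)=1$ and $\pi_1(S^0)=\pi_1(\widetilde S\setminus E)$. By van Kampen this group is generated by the local fundamental groups $G_p=\pi_1(\mathrm{link}_p)$ modulo global relations, and its abelianization $H_1(S^0,\mathbb Z)$ is the cokernel of the intersection matrix of $E$ acting on the unimodular lattice $H_2(\widetilde S,\mathbb Z)$. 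Since $\rho(S)=1$, $K_S$ is anti-ample or ample; when $K_S$ is ample the orbifold Bogomolov--Miyaoka--Yau inequality $K_S^2\le 3\,e_{\mathrm{orb}}(S)$ applies directly, and in the log del Pezzo case one invokes the corresponding bounds for log del Pezzo surfaces of Picard number one (in particular the authors' earlier bound $\#\mathrm{Sing}(S)\le 5$). Together with positivity of the orbifold Euler number, $\sum_p\bigl(1-\tfrac1{|G_p|}\bigr)<3$, and the finer Dedekind-sum constraints, this reduces the possible configurations of dual graphs to a finite, explicitly enumerable list.

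Next I would impose the two remaining hypotheses on that finite list. For a fixed dual graph, the vanishing $H_1(S^0,\mathbb Z)=0$ is a lattice-theoretic condition: one must determine how $\langle E\rangle$ sits as a (non-primitive) sublattice of $H_2(\widetilde S,\mathbb Z)$, track the glue vectors, and check that the cokernel is trivial; this is delicate because the embedding is not free and the discriminant forms of the $D$- and $E$-type summands interact, but it is a bounded computation and it eliminates the overwhelming majority of configurations. Requiring in addition that some $G_p$ be noncyclic — a binary dihedral, tetrahedral, octahedral, or icosahedral group, with the corresponding star-shaped graph — then leaves only a short list of candidates.

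For each surviving candidate one must do more than compute $H_1$. I would produce an explicit birational model by running the minimal model program on the pair $(\widetilde S,E)$ — contracting $(-1)$-curves meeting $E$ appropriately down to $\mathbb P^2$ or a Hirzebruch surface, or exhibiting $S$ as a finite quotient — and then read off a presentation of $\pi_1(\widetilde S\setminus E)$ by van Kampen along a conic pencil or a ruling. The outcome should be that in every case $\pi_1(S^0)$ is the binary icosahedral group modulo its center, i.e.\ $\mathfrak A_5$, which is perfect (consistent with $H_1=0$) and nontrivial (the point). I expect the \emph{main obstacle} to be exactly this last step: pinning down the actual fundamental group rather than its abelianization forces one to realize each surface concretely and cannot be reduced to lattice arithmetic, and it is here that both the appearance of $\mathfrak A_5$ and the non-existence of the hypothetical simply connected examples with $\ge 4$ singular points must be verified configuration by configuration. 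A secondary difficulty is keeping the orbifold BMY and Dedekind-sum bookkeeping sharp enough that the list of graphs stays genuinely finite and short; here the noncyclic hypothesis helps, since a noncyclic $G_p$ is already large, so $H_1(S^0)=0$ becomes a severe extra restriction — which is presumably why this case is accessible while the all-cyclic case is left open.
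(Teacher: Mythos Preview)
Your overall architecture matches the paper's, but there is a genuine gap in the enumeration step. You assert that orbifold BMY together with positivity of $e_{\mathrm{orb}}$ and ``Dedekind-sum constraints'' reduces the dual graphs to a \emph{finite} list. It does not. The coprimality of the $|G_p^{\mathrm{ab}}|$ forces the noncyclic point to be of type $I_m$ (dihedral, tetrahedral, octahedral types have abelianization divisible by $2$ or $3$), and the three cyclic points to have orders $2,3,5$; but the icosahedral types $\langle b;2,1;3,*;5,*\rangle$ form an \emph{infinite} family indexed by the central weight $b\ge 2$, and $e_{\mathrm{orb}}(S)>0$ holds for all of them. What actually cuts this down is a lattice fact you do not isolate: since $R\subset H^2(\widetilde S,\mathbb Z)$ is primitive of corank $1$ and $K_S\not\equiv 0$, the quantity $D:=|\det R|\cdot K_S^2$ must be a nonzero perfect square. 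Here $D$ is an explicit quadratic polynomial in $b$ in each of the $48$ type combinations, and requiring it to be a square kills $38$ infinite families outright and leaves $8$ families where $D$ is identically a square plus $2$ sporadic $b$-values; the two sporadic cases are then excluded by a contradiction in the $(-1)$-curve arithmetic. Your vaguer ``track the glue vectors and check the cokernel is trivial'' does not capture this mechanism and would not terminate.

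Second, your proposed endgame for $\pi_1$ differs from the paper's and is harder than what is actually needed. The paper does \emph{not} compute $\pi_1(S^0)$ by van Kampen. Instead, for each of the eight surviving families it carries out a precise analysis of $(-1)$-curves on $\widetilde S$ using the decomposition $C=\tfrac{k}{\sqrt D}f^*K_S+\sum_i C(i)$ with $C(i)$ supported on $f^{-1}(p_i)$; this pins down exactly which exceptional components a $(-1)$-curve can meet and with what multiplicity, and shows there are three disjoint $(-1)$-curves joining the three branches of the $I_m$-graph to the three cyclic graphs. Contracting along these realizes $\widetilde S$ as a blow-up of a Hirzebruch surface $F_b$ inside three fibres, with a $2$-dimensional moduli of blow-up centres. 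The Brieskorn quotient $\mathbb{CP}^2/I_m$ sits in this same family (its centres lie on a section), so $S^0$ is \emph{deformation equivalent} to $(\mathbb{CP}^2/I_m)^0$, and $\pi_1(S^0)\cong\mathfrak A_5$ follows by deformation invariance rather than by any direct presentation. This is both cleaner and more informative than a case-by-case van Kampen computation, and it is where the real work of the paper lies.
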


In this paper, we verify the conjecture when $S$ has at least one
non-cyclic singularity. More precisely, we prove the following:

\begin{theorem}\label{main}
Let $S$ be a $\mathbb{Q}$-homology projective plane with quotient
singularities such that $\pi_1(S^0)= \{1\}$. Assume that $S$ has
at least one non-cyclic singularity. Then $|Sing(S)|\le 3$.
\end{theorem}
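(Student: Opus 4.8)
The plan is to deduce Theorem~\ref{main} from the classification announced in the abstract, in the form of the following sharper assertion: \emph{if $S$ is a rational homology projective plane with quotient singularities such that $H_1(S^0,\mathbb{Z})=0$, $|\mathrm{Sing}(S)|\ge 4$, and at least one singular point of $S$ is noncyclic, then $\pi_1(S^0)\cong\mathfrak{A}_5$}. This does the job: if $S$ is as in Theorem~\ref{main}, then $\pi_1(S^0)=\{1\}$ forces $H_1(S^0,\mathbb{Z})=0$, so $|\mathrm{Sing}(S)|\ge 4$ would give $\pi_1(S^0)\cong\mathfrak{A}_5\ne\{1\}$, a contradiction; hence $|\mathrm{Sing}(S)|\le 3$. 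From now on assume $H_1(S^0,\mathbb{Z})=0$, $|\mathrm{Sing}(S)|\ge 4$, and that some singular point is noncyclic, and let $f\colon\widetilde S\to S$ be the minimal resolution. Since $S$ is log terminal and $\mathbb{Q}$-factorial, $\widetilde S$ is a smooth projective surface with $q(\widetilde S)=0$ and $b_2(\widetilde S)=1+\#\{\text{irreducible components of }f^{-1}(\mathrm{Sing}\,S)\}$; over a cyclic point the exceptional locus is a Hirzebruch--Jung string of smooth rational curves, and over a noncyclic point it is a tree of smooth rational curves with a single trivalent vertex (of type $D$ or $E$), the local fundamental group being the corresponding binary dihedral, binary tetrahedral, binary octahedral, or binary icosahedral group. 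One first reduces to the case that $\widetilde S$ is rational: if $\kappa(\widetilde S)\ge 0$, then, using $H_1(S^0,\mathbb{Z})=0$ and the classification of minimal surfaces of nonnegative Kodaira dimension, one checks that $S$ cannot have both $\ge 4$ singular points and a noncyclic one; so $\widetilde S$ may be assumed rational, hence simply connected, and then $\pi_1(S^0)=\pi_1\bigl(\widetilde S\setminus f^{-1}(\mathrm{Sing}\,S)\bigr)$.

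The next step pins down the singularity profile, using two constraints. First, the orbifold Bogomolov--Miyaoka--Yau inequality for $S$,
$$ K_S^2\ \le\ 3\,e_{\mathrm{orb}}(S)\ =\ 9-3\sum_{p\in\mathrm{Sing}(S)}\bigl(1-\tfrac{1}{|\pi_1^{\mathrm{loc}}(p)|}\bigr), $$
together with $K_S^2\ge 0$ (which holds because $\mathrm{Pic}(S)\otimes\mathbb{Q}$ has rank one), gives $\sum_{p}\bigl(1-\tfrac{1}{|\pi_1^{\mathrm{loc}}(p)|}\bigr)\le 3$; since a cyclic point contributes at least $\tfrac12$ and a noncyclic one at least $\tfrac78$ to this sum, this bounds $|\mathrm{Sing}(S)|$, and finer singularity-type-dependent versions of the inequality bound the individual local orders. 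Second, $H_1(S^0,\mathbb{Z})=0$ combined with the topology of $S$ (via the exact sequence $H_2(S,\mathbb{Z})\to\bigoplus_{p}H_1(L_p,\mathbb{Z})=\bigoplus_{p}\pi_1^{\mathrm{loc}}(p)^{\mathrm{ab}}\to H_1(S^0,\mathbb{Z})=0$, where $L_p$ is the link of $p$) forces $\bigoplus_{p}\pi_1^{\mathrm{loc}}(p)^{\mathrm{ab}}$ to be cyclic, hence the orders $|\pi_1^{\mathrm{loc}}(p)^{\mathrm{ab}}|$ to be pairwise coprime --- the exact counterpart here of the pairwise-coprimality of the orders $m_i$ in the topological Montgomery--Yang problem. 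Playing these two constraints against each other, and using that the binary icosahedral group $2\mathfrak{I}$ (of order $120$) is the only perfect binary polyhedral group, one concludes that $S$ has exactly four singular points, with local fundamental groups $\mathbb{Z}/2$, $\mathbb{Z}/3$, $\mathbb{Z}/5$, and $2\mathfrak{I}$.

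It remains to classify the configurations and to compute $\pi_1(S^0)$. For each point of the above profile there are only finitely many admissible resolution graphs (the cyclic quotient singularities of orders $2$, $3$, $5$, and the Du Val $E_8$-graph together with its non-Du Val $E$-type analogues having local group $2\mathfrak{I}$). The global constraints --- the orthogonal complement of the exceptional curves in $H_2(\widetilde S,\mathbb{Z})$ is a rank-one lattice, and $K_S$ has the sign imposed by the ample cone --- then select the combinations that actually occur; concretely one runs the minimal model program on $\widetilde S$, contracting $(-1)$-curves and recording how the strings and the $E$-fork evolve down to $\mathbb{P}^2$ or a Hirzebruch surface, a ``peeling'' procedure whose output is the complete (and short) list of surfaces satisfying (i)--(iii) of the abstract. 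Finally, for each such $S$ one computes $\pi_1(S^0)=\pi_1\bigl(\widetilde S\setminus f^{-1}(\mathrm{Sing}\,S)\bigr)$ by van Kampen's theorem applied to the configuration (equivalently, via Armstrong's description of orbifold fundamental groups): in every case $\pi_1(S^0)\cong\mathfrak{A}_5$, with the meridian around the $2\mathfrak{I}$-configuration mapping onto $\mathfrak{A}_5$ through the central quotient $2\mathfrak{I}\to\mathfrak{A}_5$. This establishes the sharper assertion, and with it the theorem.

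The main obstacle is the configuration classification. Even with the singularity profile fixed, discarding the spurious combinations and enumerating the surviving ones is a long and delicate case analysis: one must control simultaneously the lengths and continued-fraction data of the Hirzebruch--Jung strings, the precise shape of the $E$-type fork at the $2\mathfrak{I}$-point, and the global lattice-theoretic conditions, all while keeping the number of branches finite at every stage --- this is exactly where the sharpened forms of the orbifold BMY inequality (recorded in the numerical constants fixed at the start of the paper) are indispensable. By comparison, the reduction to the rational case is a quick preliminary, and the final van Kampen computations are routine once the list of surfaces has been produced.
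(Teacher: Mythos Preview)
Your high-level plan matches the paper's, but there is a genuine gap in your singularity-profile step. You conclude that the noncyclic point has local fundamental group exactly $2\mathfrak{I}$ (order $120$). This is incorrect: quotient singularities are classified by finite subgroups of $GL(2,\mathbb{C})$ without pseudoreflections, not merely by subgroups of $SU(2)$, and Brieskorn's list contains an infinite family $I_m\subset GL(2,\mathbb{C})$ of order $120m$ (for every $m$ coprime to $30$) with abelianization $\mathbb{Z}/m$. The coprimality constraint only forces the noncyclic point to be of type $I_m$; it does \emph{not} pin down $m$. In particular the only quotient singularity with local group $2\mathfrak{I}$ is the Du Val $E_8$, so your ``non-Du Val $E$-type analogues having local group $2\mathfrak{I}$'' do not exist, and your claim that ``there are only finitely many admissible resolution graphs'' is false: the central vertex of the $I_m$-fork has self-intersection $-b$ for arbitrary $b\ge 2$.

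This is not a minor oversight: handling the infinite family is where the paper's real work lies. For each of the $48$ combinations of cyclic types and $I_m$-branch types (each an infinite series in $b\ge 2$), the paper computes $D=|\det R|\cdot K_S^2$ and uses that $D$ must be a perfect square (Lemma~\ref{coprime}); this arithmetic constraint eliminates all but $8$ infinite families and $2$ sporadic values of $b$, the latter then dispatched by a fine $(-1)$-curve analysis on the minimal resolution. For the surviving $8$ families a further, lengthy $(-1)$-curve argument shows that the minimal resolution blows down to a Hirzebruch surface $F_b$ in a prescribed way, and $\pi_1(S^0)\cong\mathfrak{A}_5$ is read off not from van Kampen but from a deformation equivalence with the explicit Brieskorn quotient $(\mathbb{CP}^2/I_m)^0$. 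None of this apparatus---the square-discriminant criterion, the $(-1)$-curve calculus, or the deformation argument---appears in your sketch.
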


We note that the condition $\pi_1(S^0)= \{1\}$ cannot be replaced
by the weaker condition $H_1(S^0, \mathbb{Z}) = 0$. There are
infinitely many examples of $\mathbb{Q}$-homology projective
planes with exactly $4$ quotient singularities, where three of
them are cyclic and one of them is non-cyclic, such that $H_1(S^0,
\mathbb{Z}) = 0$ but $\pi_1(S^0) \neq \{1\}$ (\cite{Brieskorn} or
\cite{Kol08}, Example 31). These examples are the global quotients
$$S_{I_m}:=\mathbb{CP}^2/I_{m}=(\mathbb{CP}^2/Z)/\mathfrak{A}_5,$$
where $I_m\subset GL(2,\mathbb{C})$ is the group of order $120m$
in Brieskorn's list (see Table 1), an extension of the icosahedral
group $\mathfrak{A}_5\subset PGL(2,\mathbb{C})$ by the cyclic
group $Z\cong\mathbb{Z}_{2m}$, and the action of $I_m$ on
$\mathbb{CP}^2$ is induced from the natural action on
$\mathbb{C}^2$. We call $S_{I_m}$ a \emph{Brieskorn quotient}.

 On the other hand, it follows from the
orbifold Bogomolov-Miyaoka-Yau inequality that every
$\mathbb{Q}$-homology projective plane $S$ with quotient
singularities such that $H_1(S^0, \mathbb{Z}) = 0$ has at most $4$
singular points(cf. \cite{Kol08}, \cite{HK}). Therefore, to prove
Theorem \ref{main}, it is enough to classify $\mathbb{Q}$-homology
projective planes $S$ with $4$ quotient singularities, not all
cyclic, such that $H_1(S^0, \mathbb{Z}) = 0$. It turns out that
such a surface is isomorphic to a Brieskorn quotient.

\begin{theorem}\label{class}
Let $S$ be a $\mathbb{Q}$-homology projective plane with $4$
quotient singularities, not all cyclic, such that $H_1(S^0,
\mathbb{Z}) = 0$. Then the following hold true.
\begin{enumerate}
\item $S$ has $3$ cyclic singularities of type $\mathbb{C}^2/\mathbb{Z}_2$, $\mathbb{C}^2/\mathbb{Z}_3$,
$\mathbb{C}^2/\mathbb{Z}_5$, and one non-cyclic singularity of
type $\mathbb{C}^2/I_{m}$, where $I_m\subset GL(2,\mathbb{C})$ is
the $2m$-ary icosahedral group of order $120m$ $($in Brieskorn's
notation$)$. Furthermore, the $3$ cyclic singularities are of type
$\frac{1}{2}(1,1)$, $\frac{1}{3}(1,\alpha)$,
$\frac{1}{5}(1,\beta)$, if the $3$ branches of the dual graph of
the non-cyclic singularity are of type $\frac{1}{2}(1,1)$,
$\frac{1}{3}(1,3-\alpha)$, $\frac{1}{5}(1,5-\beta)$ $($see Table
4$)$.
\item $-K_S$ is ample.
\item The minimal resolution of $S$ can be obtained by starting with a minimal
rational ruled surface and blowing up inside $3$ of the fibres,
i.e. the blowing up starts at three centers, the intersection
points of the $3$ fibres with a section.
\item $S$ is isomorphic to the Brieskorn quotient $\mathbb{CP}^2/I_{m}$,
where $I_m$ is determined by the non-cyclic singularity of $S$ and
its action on $\mathbb{CP}^2$ is induced by the natural action on
$\mathbb{C}^2$.
\item $\pi_1(S^0)\cong \mathfrak{A}_5$, the alternating group of order $60$.
\end{enumerate}
\end{theorem}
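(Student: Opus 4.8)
I would prove the five assertions essentially in order, the substantial content lying in the classification (1); the remaining parts are then read off from the explicit minimal resolution it produces. Write $p_1,p_2,p_3,p_4$ for the four singular points, $p_4$ the noncyclic one, and $G_i=\pi_1^{\mathrm{loc}}(p_i)$ for the local fundamental groups. Since $S$ is a rational homology projective plane, $\mathrm{Pic}(S)\otimes\mathbb{Q}\cong\mathbb{Q}$ carries a class of positive self-intersection and $S$ is rational; so for the minimal resolution $\pi\colon\tilde S\to S$ with $N$ exceptional curves one has $\rho(\tilde S)=1+N$, $K_{\tilde S}^2=10-\rho(\tilde S)$, and $K_S^2=K_{\tilde S}^2+K_{\tilde S}\cdot D$ with $D=-\sum a_iE_i\ge 0$ the discrepancy divisor; in particular $K_S^2\ge K_{\tilde S}^2=9-N$ and, since $K_S$ is a rational multiple of the positive generator, $K_S^2\ge 0$ always. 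I would also use at the outset the classical description of a noncyclic quotient singularity: its minimal resolution graph is star-shaped, with a central rational curve and exactly three branches which are the Hirzebruch--Jung strings of cyclic quotients $\tfrac1{a_j}(1,b_j)$ satisfying $\tfrac1{a_1}+\tfrac1{a_2}+\tfrac1{a_3}>1$, the only options being $(a_1,a_2,a_3)\in\{(2,2,n),(2,3,3),(2,3,4),(2,3,5)\}$ (dihedral, tetrahedral, octahedral, icosahedral type respectively).

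The crux, and the step I expect to be by far the hardest, is to show $(a_1,a_2,a_3)=(2,3,5)$, that $p_1,p_2,p_3$ are cyclic of orders exactly $2,3,5$, and that their types are the ones complementary to the three branches of $p_4$ recorded in Table~4. The orbifold BMY inequality — legitimate here by the hypothesis $H_1(S^0,\mathbb{Z})=0$, which is what already bounded $|\mathrm{Sing}(S)|$ by $4$ — reads $K_S^2\le 3e_{\mathrm{orb}}(S)=9-3\sum_i(1-\tfrac1{|G_i|})$, so with $K_S^2\ge 0$ it forces $\sum_i\tfrac1{|G_i|}\ge 1$; since $|G_4|\ge 8$ this severely restricts the cyclic orders $|G_1|,|G_2|,|G_3|$. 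To reduce the remaining list one feeds in the sharper form of the inequality, in which the precise singularity types enter through the discrepancy contributions $K_{\tilde S}\cdot D$ (equivalently Dedekind sums and resolution-string lengths), together with the constraint that $K_S^2$ be strictly positive. The decisive final input is again $H_1(S^0,\mathbb{Z})=0$: through the Seifert-type local structure near $p_4$ it forces the $(a_1,a_2,a_3)$-triangle group to have trivial abelianization, and among the four spherical triples only $(2,3,5)$ has a perfect triangle-group quotient, namely $\mathfrak{A}_5$. Carrying out the bookkeeping then simultaneously pins down the explicit resolution graph, from which the pairing of the three cyclic types with the three branches of $p_4$ (Table~4) falls out. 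This portion is an unavoidably lengthy finite case analysis.

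Granting (1), the rest is comparatively short. For (2): with the configuration fixed one computes $K_S^2>0$ directly from $K_S^2=K_{\tilde S}^2+K_{\tilde S}\cdot D$; since $\mathrm{Pic}(S)\otimes\mathbb{Q}=\mathbb{Q}$ has positive square and $S$ is rational (so $K_S$ is not nef), $-K_S$ is ample. For (3): $-K_S$ ample makes $S$ a log del Pezzo surface, hence $-K_{\tilde S}$ is big; running a $K_{\tilde S}$-MMP produces a Mori fibre space, and tracking the contracted $(-1)$-curves against the exceptional configuration — in particular that the three legs of the star and the three cyclic strings become vertical for the induced ruling — identifies $\tilde S$ as a minimal rational ruled surface blown up along three towers of infinitely near points, one tower begun on each of three fibres. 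For (4): the data (minimal ruled surface; three fibres; three starting centres) with the combinatorially determined towers form an irreducible family; a parameter count modulo $\mathrm{Aut}$ of the ruled surface gives dimension $2$, and the Brieskorn quotient $\mathbb{CP}^2/I_m$ (with $I_m$ determined by $p_4$) lies in it, so by Ehresmann all the smooth loci $S^0$ in the family are diffeomorphic, whence $S^0$ is deformation equivalent to $(\mathbb{CP}^2/I_m)^0$. Finally (5) follows from (4) together with the computation of $\pi_1\bigl((\mathbb{CP}^2/I_m)^0\bigr)$: realizing the $I_m$-action on $\mathbb{CP}^2$ via the embedding $GL(2,\mathbb{C})\hookrightarrow PGL(3,\mathbb{C})$, one checks that because $I_m$ is a small subgroup of $GL(2,\mathbb{C})$ the only pseudo-reflections of $\mathbb{CP}^2$ in its image are the nontrivial elements of the central scalar subgroup $Z\cong\mathbb{Z}_{2m}$ (which fix the line at infinity), so that van Kampen applied to the branched cover $\mathbb{CP}^2\to\mathbb{CP}^2/I_m$ yields $\pi_1\bigl((\mathbb{CP}^2/I_m)^0\bigr)=I_m/Z\cong\mathfrak{A}_5$.
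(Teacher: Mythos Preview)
Your overall architecture is right --- establish (1) first, then read off (2)--(5) --- but the mechanism you propose for (1) has two genuine gaps, and neither is merely a matter of ``bookkeeping''.

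First, the argument you sketch for forcing the noncyclic point to be of icosahedral type is not correct. The condition $H_1(S^0,\mathbb{Z})=0$ does \emph{not} make the local group $G_4$ (or the associated triangle group) perfect; indeed $I_m^{\mathrm{ab}}\cong\mathbb{Z}_m$ is nontrivial in general. What $H_1(S^0,\mathbb{Z})=0$ actually buys is that the exceptional lattice $R=\bigoplus R_{p_i}$ is primitive in the unimodular lattice $H^2(\tilde S,\mathbb{Z})$, hence $\mathrm{disc}(R)$ is cyclic and the orders $|G_i^{\mathrm{ab}}|=|\det R_{p_i}|$ are \emph{pairwise coprime}. Combined with $e_{\mathrm{orb}}(S)\ge 0$ this already forces three of the orders to be $2,3,5$; then Brieskorn's table shows every noncyclic type other than $I_m$ has abelianization of order divisible by $2$ or $3$, so coprimality rules them out. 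Your ``perfect triangle group'' reasoning does not do this.

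Second, and more seriously, ``sharper BMY with discrepancies'' will not pin down the pairing of the three cyclic types with the three branches of $p_4$. There are $2\times 3\times 8=48$ infinite families a priori, and the tool the paper uses to cut these down to the $8$ of Table~4 (plus two sporadic cases later eliminated by a $(-1)$-curve computation) is lattice-theoretic, not BMY: since $R+\langle K_{\tilde S}\rangle$ has finite index in the unimodular $H^2(\tilde S,\mathbb{Z})$, the integer $D=|\det R|\cdot K_S^2$ must be a perfect square. Computing $D$ as a quadratic polynomial in the central weight $b$ in each of the $48$ families and imposing this square condition is what singles out the complementary pairing $\langle 3,\alpha\rangle\leftrightarrow\langle 3,3-\alpha\rangle$, $\langle 5,\beta\rangle\leftrightarrow\langle 5,5-\beta\rangle$. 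Without this constraint you have no way to exclude the other $40$ families, and your proof of (1) does not close.

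For (3) the paper likewise needs more than an abstract MMP: it proves, case by case in each of the $8$ families, that every $(-1)$-curve on $\tilde S$ meeting $f^{-1}(p_4)$ meets exactly one branch-tip (with multiplicity $1$) and exactly one cyclic configuration, and that three mutually disjoint such curves exist. This is what guarantees the blow-down lands on $F_b$ with the blowing-up supported on exactly three fibres; a generic MMP run gives no such control. Your sketches for (2), (4), (5) are essentially the paper's arguments once (1) and (3) are in hand, though note that ``$S$ is rational, so $K_S$ is not nef'' is circular at the stage you invoke it: the paper instead computes $K_S^2>3e_{\mathrm{orb}}(S)$ in each surviving family and concludes $-K_S$ ample from the BMY inequality.
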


In the proof, we use the orbifold Bogomolov-Miyaoka-Yau inequality
(Theorem \ref{bmy} and \ref{bmy2}) and a detailed computation for
$(-1)$-curves on the minimal resolution $S'$ of $S$. The latter
idea was used in \cite{K08}.

In the cyclic case (where $S$ has cyclic singularities only),
Conjecture \ref{amy} has been confirmed in a separate paper
\cite{HK2} unless $S$ is a rational surface with $K_S$ ample.

\medskip
Throughout this paper, we work over the field $\mathbb{C}$ of
complex numbers.

\section{Algebraic surfaces with quotient singularities}
\subsection{} A singularity $p$ of a normal surface $S$ is called a quotient
singularity if the germ  is locally analytically isomorphic to
$(\mathbb{C}^2/G,O)$  for some nontrivial finite subgroup $G$ of
$GL_2(\mathbb{C})$ without quasi-reflections. Brieskorn classified
such finite subgroups of $GL(2, \mathbb{C})$ [Bri]. Table 1
summarizes the result. Here we only explain the notation for dual
graph.

$$
\begin{array}{lcl}
 <q,q_1> &:=& \text{ the dual graph of the singularity of type }
 \dfrac{1}{q}(1,q_1)
\\
<b;s_1, t_1; s_2, t_2;s_3, t_3> &:=& \text{ the tree of the form}\\
& &\\
& &
\begin{picture}(60,30)
\put(40,25){$<s_2, t_2>$}
 \put(60,15){\line(0,0){6}}
\put(0,5){$<s_1,t_1> -\underset{-b}\circ - <s_3, t_3>$}
\end{picture}
\end{array}
$$
\\
For more information about the table, we refer to the original
paper of Brieskorn\cite{Brieskorn}.

\footnotesize

\begin{table}[ht]
\label{eqtable}
\renewcommand\arraystretch{1.5}
\caption{Classification of finite subgroups of $GL(2, \mathbb{C})$}
\noindent $$
\begin{array}{|c|c|c|c|l|}
\hline
$Type$&G&|G|& G/[G, G]&$Dual Graph $ \Gamma_G\\
\hline
A_{q, q_1}&C_{q, q_1}&q&\mathbb{Z}_q&<q,q_1>\\ & & & & 0<q_1<q$, $ (q,q_1)=1\\
\hline D_{q, q_1}&(Z_{2m}, Z_{2m};D_q, D_q) & 4mq
&\mathbb{Z}_{2m}\times \mathbb{Z}_2&<b;2,1;2,1;q,q_1>\\ & & &
&m=(b-1)q-q_1$
odd$\\
 \hline
D_{q, q_1}&(Z_{4m}, Z_{2m};D_q,
C_{2q})&4mq&\mathbb{Z}_{4m}&<b;2,1;2,1;q,q_1> \\
& & & & m=(b-1)q-q_1$ even$\\
\hline
T_m& (Z_{2m}, Z_{2m};T,T)&24m&\mathbb{Z}_{3m}&<b;2,1;3,2;3,2>,  m=6(b-2)+1\\
& &&&<b;2,1;3,1;3,1>,  m=6(b-2)+5\\
\hline
T_m& (Z_{2m}, Z_{2m};T,D_2)&24m&\mathbb{Z}_{3m}&<b;2,1;3,1;3,2>,  m=6(b-2)+3\\
\hline

& &&&<b;2,1;3,2;4,3>,  m=12(b-2)+1\\
O_m& (Z_{2m}, Z_{2m};O,O)&48m&\mathbb{Z}_{2m}&<b;2,1;3,1;4,3>,  m=12(b-2)+5\\
& &&&<b;2,1;3,2;4,1>,  m=12(b-2)+7\\
& &&&<b;2,1;3,1;4,1>,  m=12(b-2)+11\\
\hline

& &&&<b;2,1;3,2;5,4>,  m=30(b-2)+1\\
& &&&<b;2,1;3,2;5,3>,  m=30(b-2)+7\\
& &&&<b;2,1;3,1;5,4>,  m=30(b-2)+11\\
I_m& (Z_{2m}, Z_{2m};I,I)&120m&\mathbb{Z}_{m}&<b;2,1;3,2;5,2>,  m=30(b-2)+13\\
& &&&<b;2,1;3,1;5,3>,  m=30(b-2)+17\\
& &&&<b;2,1;3,2;5,1>,  m=30(b-2)+19\\
& &&&<b;2,1;3,1;5,2>,  m=30(b-2)+23\\
& &&&<b;2,1;3,1;5,1>,  m=30(b-2)+29\\
\hline
\end{array}
$$
\end{table}

\normalsize

\subsection{}  Let $S$ be a normal projective surface with quotient singularities and $$f : S'
\rightarrow S$$ be a minimal resolution of $S$. It is well-known
that quotient singularities are log-terminal
 singularities. Thus one can write $$K_{S'} \underset{num}{\equiv} f^{*}K_S -
 \sum_{p \in Sing(S)}{\mathcal{D}_p},$$ where $\mathcal{D}_p = \sum(a_jE_j)$ is
 an effective $\mathbb{Q}$-divisor supported on $f^{-1}(p)=\cup E_j$ with $0 \leq a_j < 1$ for each singular point $p$.
It implies that
\[K^2_S = K^2_{S'} - \sum_{p}{\mathcal{D}_p^2}= K^2_{S'} + \sum_{p}{\mathcal{D}_pK_{S'}}.
\]

\begin{lemma}[]\label{-k} If $-K_S$ is ample, then $C^2\ge -1$ for any irreducible curve $C\subset S'$ not contracted by $f : S'
\rightarrow S$.
\end{lemma}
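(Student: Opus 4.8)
The plan is to intersect the displayed numerical equivalence
$K_{S'} \underset{num}{\equiv} f^{*}K_S - \sum_{p}\mathcal{D}_p$
with $C$ and then feed the result into the adjunction formula on the smooth surface $S'$.

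First I would observe that, since $C$ is not contracted by $f$, the restriction $f|_C\colon C\to f(C)$ is birational, so $f_{*}C = f(C)$ as a $1$-cycle, and the projection formula yields $f^{*}K_S\cdot C = K_S\cdot f(C)$. As $-K_S$ is ample and $f(C)$ is a nonzero effective curve, $K_S\cdot f(C) < 0$. On the other hand, each $\mathcal{D}_p$ is an effective $\mathbb{Q}$-divisor supported on the $f$-exceptional locus, and $C$ is not one of those exceptional components, so $\mathcal{D}_p\cdot C\ge 0$ for every $p$. Combining these,
\[
K_{S'}\cdot C \;=\; f^{*}K_S\cdot C - \sum_{p}\mathcal{D}_p\cdot C \;\le\; K_S\cdot f(C) \;<\; 0 .
\]
Because $S'$ is smooth, $K_{S'}\cdot C$ is an integer, hence $K_{S'}\cdot C\le -1$. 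Then the adjunction formula $2p_a(C)-2 = C^2 + K_{S'}\cdot C$ together with $p_a(C)\ge 0$ gives
\[
C^2 \;=\; 2p_a(C) - 2 - K_{S'}\cdot C \;\ge\; -2 - K_{S'}\cdot C \;\ge\; -2 + 1 \;=\; -1 ,
\]
which is the assertion.

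There is no real obstacle here; the only places needing a moment's care are the identification $f_{*}C=f(C)$ (valid because $f$ is birational and $C$ is not contracted) and the fact that $K_S$ need not be Cartier, so that $f^{*}K_S$ is only a $\mathbb{Q}$-divisor. The latter is harmless, since the projection formula and the numerical intersection pairing extend to $\mathbb{Q}$-divisors, and the inequality $\mathcal{D}_p\cdot C\ge 0$ is unaffected.
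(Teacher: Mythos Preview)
Your proof is correct and follows exactly the same route as the paper: show $f^{*}K_S\cdot C<0$ and $\sum_p\mathcal{D}_p\cdot C\ge 0$, deduce $K_{S'}\cdot C<0$, and then conclude $C^2\ge -1$ via adjunction. The paper simply states these steps without the explicit projection formula and $p_a(C)\ge 0$ justification you provide.
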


\begin{proof} Note that $C(f^{*}K_S)< 0$ and $C(\sum{\mathcal{D}_p})\ge 0$. Thus $CK_{S'}<0$, and hence $C^2\ge -1$.
\end{proof}

Also we  recall the orbifold Euler characteristic
$$ e_{orb}(S) := e(S) - \sum_{p \in Sing(S)} \Big ( 1-\frac{1}{|G_p|} \Big ),$$
where $G_p$ is the local fundamental group of $p$.

The following theorem, called the orbifold Bogomolov-Miyaoka-Yau
inequality, is one of the main ingredients in the proof of our
main theorem.

\begin{theorem}[\cite{Sakai}, \cite{Miyaoka}, \cite{KNS}, \cite{Megyesi}]\label{bmy} Let
$S$ be a normal projective surface with quotient singularities
such that $K_S$ is nef. Then
\[
K_{S}^2 \leq 3e_{orb}(S).
\]
In particular, $$ 0\leq e_{orb}(S).$$
\end{theorem}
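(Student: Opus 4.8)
The plan is to regard $S$ as a compact complex orbifold (a $V$-manifold), its orbifold charts being the germs $(\mathbb{C}^2/G_p, 0)$ at the quotient singularities, and to carry out the classical proof of the Bogomolov--Miyaoka--Yau inequality in this orbifold category, as in the cited works. In this setting $S$ has an orbifold tangent bundle $T_S$, and the two sides of the asserted inequality are its orbifold Chern numbers: $\bar{c}_1(T_S)^2 = K_S^2$ and $\bar{c}_2(T_S) = e_{orb}(S)$. The first identity comes from $\bar{c}_1(T_S) = -[K_S]$ in orbifold cohomology; the second is Satake's orbifold Gauss--Bonnet theorem, which at each point $p$ reduces to the elementary fact that a point with local group $G_p$ contributes $\tfrac{1}{|G_p|}$ to the orbifold Euler number. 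Thus it suffices to prove the orbifold Chern inequality $\bar{c}_1(T_S)^2 \le 3\,\bar{c}_2(T_S)$.

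It is enough to treat the case $K_S$ ample: if $\kappa(S) = 2$ one passes to the canonical model, which again has only quotient singularities, has ample canonical class, and for which the desired inequality implies the one for $S$; if $\kappa(S)\le 1$ then $K_S^2 = 0$ and the inequality becomes $e_{orb}(S)\ge 0$, verified directly from the structure of a minimal model of the resolution. When $K_S$ is ample I would invoke the orbifold form of the Aubin--Yau theorem (in the $V$-manifold version of R. Kobayashi and of Kobayashi--Nakamura--Spindler): $S$ admits an orbifold K\"ahler--Einstein $V$-metric $g$ with $\operatorname{Ric}(g) = -g$. The curvature tensor of any K\"ahler--Einstein metric satisfies the pointwise algebraic inequality used in Yau's proof of the Miyaoka--Yau inequality, exhibiting $3c_2(g) - c_1^2(g)$ as a nonnegative $(2,2)$-form that vanishes exactly where the holomorphic sectional curvature is constant. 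Integrating this inequality over $S$ is legitimate, because the Chern forms of $g$ are $G_p$-invariant on the uniformizing charts and their integrals over $S$ compute the orbifold Chern numbers; this gives $K_S^2 \le 3\,e_{orb}(S)$.

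I expect the main obstacle to be the analytic input together with its compatibility with the orbifold bookkeeping: Yau's a priori estimates must survive in the presence of the quotient singularities so that the orbifold K\"ahler--Einstein metric exists (and, in the nef-to-ample reduction, one must track how the invariants move), and one must check that the Chern--Weil integrals of $g$ genuinely compute $K_S^2$ and $e_{orb}(S)$ rather than acquiring local correction terms at the singular points --- this is where the finiteness of the groups $G_p$ and the explicit resolution graphs from Brieskorn's list are used. A purely algebraic route is also available and avoids the metric: on the minimal resolution $f\colon S'\to S$ one bounds the slopes of rank-one subsheaves of the orbifold cotangent sheaf using Bogomolov's semistability theorem and Hirzebruch--Riemann--Roch, and the local contributions of the quotient singularities assemble into $e_{orb}(S)$; there the obstacle shifts to the instability/foliation analysis and the precise computation of the local orbifold Euler numbers. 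This is the approach of Miyaoka, Sakai and Megyesi, and either route yields the stated inequality.
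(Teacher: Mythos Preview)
The paper does not prove this theorem at all: it is stated with attributions to Sakai, Miyaoka, Kobayashi--Nakamura--Sakai, and Megyesi, and then used as a black box in the arguments of Section~3 (notably in Lemma~\ref{-ample}). There is therefore no ``paper's own proof'' to compare against.

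Your proposal is a reasonable outline of what the cited references actually do. The differential-geometric route via an orbifold K\"ahler--Einstein metric is essentially the argument of Kobayashi--Nakamura--Sakai (note: the ``S'' in [KNS] is Sakai, not Spindler), while the algebraic route via semistability of the (orbifold) cotangent sheaf is Miyaoka's method, extended by Sakai and Megyesi. Your reduction step is slightly glib: when $K_S$ is nef but not big you assert $e_{orb}(S)\ge 0$ ``directly from the structure of a minimal model,'' but this case-by-case verification is not entirely trivial and in the literature is usually absorbed into the general argument rather than handled separately. Apart from that, nothing in your sketch is wrong, and in fact you have supplied more than the paper does.
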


The weaker inequality holds when $-K_S$ is nef.

\begin{theorem}[\cite{KM}]\label{bmy2} Let
$S$ be a normal projective surface with quotient singularities such
that $-K_S$ is nef. Then $$ 0\leq e_{orb}(S).$$
\end{theorem}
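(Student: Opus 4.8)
This inequality is quoted from \cite{KM}; here I outline the line of argument I would follow.

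\emph{First I would isolate the trivial sub-case.} If $K_S\equiv 0$, then $K_S$ is nef and Theorem \ref{bmy} applies verbatim, giving $0=K_S^2\le 3\,e_{orb}(S)$, hence $e_{orb}(S)\ge 0$. So from now on assume $K_S$ is not nef, and run the minimal model program for the klt pair $(S,\emptyset)$. Every extremal contraction $g\colon S\to S_1$ with $K_S$-negative exceptional locus outputs a normal projective surface $S_1$ with quotient singularities on which $-K$ is again nef (because $-K_S$ nef forces the discrepancy correction $K_S-g^{*}K_{S_1}$ to be an effective exceptional divisor that is nonnegative on every curve of $S_1$). Iterating, the program terminates either at a surface with numerically trivial canonical class (handled as in the first sub-case via Theorem \ref{bmy}) or at one of two Mori fibre spaces: (i) a log del Pezzo surface $T$ with $\rho(T)=1$, or (ii) a conic bundle $p\colon T\to B$ over a smooth curve $B$ with generic fibre $\mathbb{P}^1$.

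\emph{Next I would verify the inequality for the two building blocks.} In case (ii) the orbifold Euler number factors through the fibration: $e_{orb}(T)$ equals $e(\mathbb{P}^1)=2$ times the orbifold Euler characteristic of $B$ with its induced multiplicities, plus nonnegative local corrections coming from the degenerate fibres and the singular points of $T$ lying on them; since $-K_T$ nef forces the degree of the induced orbifold canonical class on $B$ to be $\le 0$, the orbifold Euler characteristic of $B$ is $\ge 0$ and one concludes $e_{orb}(T)\ge 0$. In case (i), $T$ is a rational surface and, by boundedness of two-dimensional log del Pezzo pairs, belongs to one of finitely many deformation types; in each, $e_{orb}(T)\ge 0$ can be read off from the minimal resolution (a concrete blow-up of a weak del Pezzo surface) or deduced from the finiteness of $\pi_1^{orb}(T)$ together with the orbifold Gauss--Bonnet formula, which writes $e_{orb}(T)$ as a positive rational multiple of the Euler number of a rational surface.

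\emph{Finally I would propagate the inequality back up the program.} It remains to check that each elementary step $S_j\to S_{j+1}$ changes the orbifold Euler number compatibly, so that $e_{orb}(S)\ge e_{orb}(T)\ge 0$; concretely, when a $K$-negative curve is contracted to a (possibly new, possibly worse) quotient singular point one must compare the change in the topological Euler number with the change in $\sum_{p}\bigl(1-\tfrac{1}{|G_p|}\bigr)$. This bookkeeping, together with the rank-one log del Pezzo case of (i), is where I expect the real work to lie: the local interaction between the contracted curve and the singular points lying on it has to be analysed case by case, which is essentially what \cite{KM} carries out. (In the setting of the present paper the extra hypotheses $b_2(S)=1$ and $H_1(S^0,\mathbb{Z})=0$ would collapse most of this analysis, but the theorem as cited assumes neither.)
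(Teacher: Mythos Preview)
The paper does not prove Theorem~\ref{bmy2}; it is stated as a result quoted from Keel--McKernan \cite{KM} and used as a black box. There is therefore no ``paper's own proof'' to compare your proposal against, and you yourself acknowledge this at the outset of your sketch.

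As for the sketch itself: the overall architecture (reduce via the MMP to a Mori fibre space and check the base cases) is the natural strategy, but the step you flag as ``where I expect the real work to lie'' is indeed the entire content of the theorem. Two specific cautions. First, your claim that $-K$ remains nef after a $K$-negative contraction is fine (push-forward of a nef divisor under a birational morphism of surfaces is nef), but the inequality $e_{orb}(S)\ge e_{orb}(T)$ you need to propagate back up is \emph{not} automatic: contracting a curve drops the topological Euler number by at least $1$, while the singular-point correction term can move in either direction depending on how the contracted curve passes through existing singular points and what new singularity it creates. Second, in the rank-one log del Pezzo case your appeal to ``boundedness'' and ``finitely many deformation types'' is anachronistic here---that boundedness (Alexeev, Nikulin) is a separate deep result, and in any case checking $e_{orb}\ge 0$ type by type is not how \cite{KM} proceeds. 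The actual argument in \cite{KM} goes through their analysis of rational curves and the structure of the orbifold fundamental group rather than a direct MMP bookkeeping of $e_{orb}$. Your outline is a reasonable heuristic for why the result should hold, but it is not yet a proof.
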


\subsection{} Let $S$ be a normal projective surface with quotient singularities and $f : S'
\rightarrow S$ be a minimal resolution of $S$. It is well-known
that the torsion-free part of the second cohomology group,
$$H^2(S', \mathbb{Z})_{free} := H^2(S', \mathbb{Z})/(torsion)$$  has a
lattice structure which is unimodular. For a quotient singular
point $p\in S$, let $$R_p\subset H^2(S', \mathbb{Z})_{free}$$ be
the sublattice of $H^2(S', \mathbb{Z})_{free}$ spanned by the
numerical classes of the components of $f^{-1}(p)$. It is a
negative definite lattice, and its discriminant group
$${\rm disc}(R_p):={\rm Hom}(R_p, \mathbb{Z})/R_p$$ is isomorphic to the
abelianization $G_p/[G_p, G_p]$ of the local fundamental group
$G_p$. In particular, the absolute value $|\det(R_p)|$ of the
determinant
 of the intersection matrix of $R_p$ is equal to the order
$|G_p/[G_p, G_p]|$. Let
 $$R= \oplus_{p \in Sing(S)} R_p\subset H^2(S', \mathbb{Z})_{free}$$ be the sublattice of $H^2(S', \mathbb{Z})_{free}$ spanned by the
numerical classes of the exceptional curves of $f:S'\to S$. We
also consider the sublattice $$R+\langle K_{S'}\rangle\subset
H^2(S', \mathbb{Z})_{free}$$ spanned by $R$ and the canonical
class $K_{S'}$. Note that $${\rm rank}(R)\le {\rm rank}(R+\langle
K_{S'}\rangle)\le{\rm rank}(R)+1.$$

\begin{lemma}[\cite{HK}, Lemma 3.3]\label{detR} Let $S$ be a normal projective surface with quotient singularities and $f : S'
\rightarrow S$ be a minimal resolution of $S$. Then the following
hold true.
\begin{enumerate}
\item ${\rm rank}(R+\langle K_{S'}\rangle)={\rm
rank}(R)$ if and only if $K_S$ is numerically trivial.
\item $\det(R+\langle K_{S'}\rangle)=\det(R)\cdot K_S^2$ if $K_S$ is not numerically trivial.
\item If in addition $b_2(S)=1$ and $K_S$ is not numerically trivial, then
$R+\langle K_{S'}\rangle$ is a sublattice of finite index in the
unimodular lattice $H^2(S', \mathbb{Z})_{free}$, in particular
$|\det(R+\langle K_{S'}\rangle)|$ is a nonzero square number.
\end{enumerate}
\end{lemma}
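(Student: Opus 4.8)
The plan is to reduce all three statements to one structural fact about the minimal resolution: the pullback map $f^{*}\colon H^{2}(S,\mathbb{Q})\to H^{2}(S',\mathbb{Q})$ is injective (since $f_{*}f^{*}=\mathrm{id}$) and its image is orthogonal to $R\otimes\mathbb{Q}$, because $f^{*}\alpha\cdot E_{j}=\alpha\cdot f_{*}E_{j}=0$ for every exceptional curve $E_{j}$ and every class $\alpha$ on $S$, by the projection formula. Feeding this into the numerical equivalence $K_{S'}\underset{num}{\equiv}f^{*}K_{S}-\mathcal{D}$, with $\mathcal{D}=\sum_{p}\mathcal{D}_{p}\in R\otimes\mathbb{Q}$, exhibits $K_{S'}$ as the sum of two orthogonal pieces: the component $f^{*}K_{S}$, perpendicular to $R$, and the component $-\mathcal{D}$, lying inside $R\otimes\mathbb{Q}$. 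Everything else is bookkeeping with this decomposition.

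For (1) I would argue that $K_{S'}\in R\otimes\mathbb{Q}$ if and only if $f^{*}K_{S}\in R\otimes\mathbb{Q}$; since $f^{*}K_{S}$ is already orthogonal to $R$ and the intersection form is negative definite, hence nondegenerate, on $R\otimes\mathbb{Q}$, this forces $f^{*}K_{S}=0$, which by injectivity of $f^{*}$ means exactly that $K_{S}$ is numerically trivial. As ${\rm rank}(R+\langle K_{S'}\rangle)={\rm rank}(R)$ precisely when $K_{S'}\in R\otimes\mathbb{Q}$, this gives (1). For (2), assume $K_{S}$ is not numerically trivial, so by (1) the class $K_{S'}$ is linearly independent from $R\otimes\mathbb{Q}$; choosing a $\mathbb{Z}$-basis of $R$ and adjoining $K_{S'}$ gives a basis of $R+\langle K_{S'}\rangle$, and the standard Gram-determinant identity for a lattice spanned by a sublattice $M$ together with one extra vector $v=v_{M}+v^{\perp}$ (with $v_{M}\in M\otimes\mathbb{Q}$, $v^{\perp}\perp M$) yields $\det(R+\langle K_{S'}\rangle)=\det(R)\cdot(v^{\perp})^{2}$. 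Here $v^{\perp}=f^{*}K_{S}$ and $(f^{*}K_{S})^{2}=K_{S}^{2}$, so $\det(R+\langle K_{S'}\rangle)=\det(R)\cdot K_{S}^{2}$.

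For (3), under $b_{2}(S)=1$ and $K_{S}$ not numerically trivial one has $H^{2}(S,\mathbb{Q})=\mathbb{Q}\cdot K_{S}$, hence $f^{*}H^{2}(S,\mathbb{Q})=\mathbb{Q}\cdot f^{*}K_{S}$ and, by the blow-up formula $b_{2}(S')=b_{2}(S)+{\rm rank}(R)$ together with the orthogonality above, $H^{2}(S',\mathbb{Q})=f^{*}H^{2}(S,\mathbb{Q})\oplus(R\otimes\mathbb{Q})$. Thus ${\rm rank}(R+\langle K_{S'}\rangle)={\rm rank}(R)+1=b_{2}(S')={\rm rank}\,H^{2}(S',\mathbb{Z})_{free}$, so $R+\langle K_{S'}\rangle$ sits with finite index in the unimodular lattice $H^{2}(S',\mathbb{Z})_{free}$; for a finite-index sublattice $N\subset M$ of a unimodular lattice $M$ we have $\det(N)=[M:N]^{2}\det(M)=\pm[M:N]^{2}$, so $|\det(R+\langle K_{S'}\rangle)|$ is a nonzero square. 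The main obstacle is essentially nonexistent here; the only points requiring care are the sign conventions in the Gram-determinant identity of (2) and confirming the orthogonal direct-sum decomposition of $H^{2}(S',\mathbb{Q})$ from the blow-up Betti-number count, after which (3) is immediate.
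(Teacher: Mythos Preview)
The paper does not supply its own proof of this lemma: it is quoted verbatim from \cite{HK} (Lemma~3.3 there) and used without argument, so there is nothing in the present paper to compare your proposal against. That said, your argument is correct and is essentially the standard one. The orthogonal decomposition $K_{S'}\underset{num}{\equiv}f^{*}K_{S}-\mathcal{D}$ with $\mathcal{D}\in R\otimes\mathbb{Q}$ and $f^{*}K_{S}\perp R$ is exactly the right structural input; the Schur-complement computation in (2) is clean and yields $\det(R+\langle K_{S'}\rangle)=\det(R)\cdot(f^{*}K_{S})^{2}=\det(R)\cdot K_{S}^{2}$ as claimed; and for (3) the index formula $|\det(N)|=[M:N]^{2}|\det(M)|$ for a finite-index sublattice $N$ of a unimodular lattice $M$ finishes things off. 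The one place where a reader might want a word more is your ``blow-up formula'' $b_{2}(S')=b_{2}(S)+\mathrm{rank}(R)$: this is of course true for resolutions of quotient (hence rational) singularities, but it is not literally a blow-up formula---it follows from the Leray spectral sequence for $f$ using that the exceptional set over each singular point is a tree of rational curves (so $R^{1}f_{*}\mathbb{Q}=0$), or from the additivity of Euler characteristics. With that caveat, your proof is complete.
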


We denote the number $|\det(R+\langle K_{S'}\rangle)|$ by $D$,
i.e., we define
$$D:=|\det(R+\langle K_{S'}\rangle)|.$$

The following will be also used in our proof.

\begin{lemma}[]\label{coprime} Let $S$ be a $\mathbb{Q}$-homology projective plane with quotient singularities
 such that $H_1(S^0,\mathbb{Z}) = 0$. Let $f:S' \rightarrow S$ be a minimal resolution. Then
\begin{enumerate}
\item $H^2(S', \mathbb{Z})$ is torsion free, i.e. $H^2(S', \mathbb{Z})=H^2(S',
\mathbb{Z})_{free}$,
\item $R$ is a primitive
sublattice of the unimodular lattice $H^2(S', \mathbb{Z})$, \item
 ${\rm disc}(R)$ is a cyclic group, in particular, the orders $|G_p/[G_p, G_p]| = |\det(R_p)|$ are pairwise relatively prime, \item $K_S$ is not numerically
 trivial, i.e. $K_S$ is either ample or anti-ample,
\item $D=|\det(R)|
K_S^2$ and $D$ is a nonzero square number,
 \item the Picard group $Pic(S')$ is generated over $\mathbb{Z}$ by the
exceptional curves and a $\mathbb{Q}$-divisor $M$ of the form
$$M = \frac{1}{\sqrt{D}} f^*K_S +  \underset{p \in Sing(S)}{\sum}
b_p e_p $$ for some integers $b_p$, where $e_p$ is a generator of
${\rm disc}(R_p)$.
\end{enumerate}
\end{lemma}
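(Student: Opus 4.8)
The plan is to first extract all the topological input from $b_2(S)=1$ and $H_1(S^0,\mathbb{Z})=0$, and then translate it into the lattice theory of the unimodular lattice $H^2(S',\mathbb{Z})$. Set $E=f^{-1}({\rm Sing}(S))$; it is a disjoint union of trees of smooth rational curves, so $H_1(E,\mathbb{Z})=0$, $H_2(E,\mathbb{Z})\cong\mathbb{Z}^{n}$ with $n={\rm rank}(R)$, and $S^0=S'\setminus E$. Since $H_1(S',\mathbb{Z})$ is a quotient of $H_1(S^0,\mathbb{Z})=0$, both vanish, and universal coefficients then show $H^2(S',\mathbb{Z})$ and $H^2(S^0,\mathbb{Z})$ are torsion free; this is (1). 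I would then run the cohomology-with-supports exact sequence of the pair $(S',S^0)$ and use the duality $H^k_E(S',\mathbb{Z})\cong H_{4-k}(E,\mathbb{Z})$ to get an exact sequence
\[
0\to H^1(S',\mathbb{Z})\to H^1(S^0,\mathbb{Z})\to H_2(E,\mathbb{Z})\xrightarrow{\alpha} H^2(S',\mathbb{Z})\to H^2(S^0,\mathbb{Z})\to H_1(E,\mathbb{Z})=0,
\]
whose map $\alpha$ has image $R$ (it is the cycle-class map $H_2(E)\to H_2(S')\cong H^2(S')$). Hence $H^2(S',\mathbb{Z})/R\cong H^2(S^0,\mathbb{Z})$ is torsion free, i.e., $R$ is primitive, which is (2). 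Tensoring with $\mathbb{Q}$ and using that the singular points are isolated (so $H^i(S^0,\mathbb{Q})\cong H^i(S,\mathbb{Q})$ for $i\le 2$, the local cohomology of a cone over a rational homology $3$-sphere being concentrated in degree $4$) yields $b_2(S')=b_2(S)+n=n+1$, hence ${\rm rank}(R^\perp)=1$. Finally, pulling back an ample Cartier divisor $H$ on $S$ gives an integral class $f^*H\in R^\perp$ with $(f^*H)^2=H^2>0$; since $R\oplus\langle f^*H\rangle$ has rank $n+1=b_2(S')$ and signature $(1,n)$, the form on $H^2(S',\mathbb{R})$ has signature $(1,n)$, so $p_g(S')=q(S')=0$ and therefore $Pic(S')=NS(S')=H^2(S',\mathbb{Z})$.

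Part (3) is then pure lattice theory: $R$ is primitive in the unimodular lattice $H^2(S',\mathbb{Z})$, so ${\rm disc}(R)\cong{\rm disc}(R^\perp)$; since $R^\perp$ has rank $1$ this group is cyclic, hence so is ${\rm disc}(R)=\bigoplus_p{\rm disc}(R_p)=\bigoplus_p G_p/[G_p,G_p]$, and a cyclic group splitting as such a direct sum forces the orders $|G_p/[G_p,G_p]|=|\det(R_p)|$ to be pairwise coprime. For (4), suppose $K_S$ is numerically trivial. Then $f^*K_S\equiv 0$, so $K_{S'}\equiv-\sum_p\mathcal{D}_p$, giving $K_{S'}^2=\sum_p\mathcal{D}_p^2\le 0$; on the other hand $K_{S'}^2=K_S^2+\sum_p\mathcal{D}_pK_{S'}=K_S^2-\sum_p\mathcal{D}_p^2=-\sum_p\mathcal{D}_p^2\ge 0$ (using $\mathcal{D}_p\cdot f^*K_S=0$, disjoint supports, and $K_S^2=0$). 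Hence $\sum_p\mathcal{D}_p^2=0$, forcing every $\mathcal{D}_p=0$ and $K_{S'}\equiv 0$; but then $S'$ is a minimal surface of Kodaira dimension $0$ with $b_1(S')=0$ and $p_g(S')=0$, hence an Enriques surface, contradicting (1) since $H^2$ of an Enriques surface has $2$-torsion. So $K_S$ is not numerically trivial; since $NS(S)$ has rank $1$ it is a nonzero rational multiple of an ample class, hence ample or anti-ample with $K_S^2>0$, and then (5) is immediate from Lemma~\ref{detR}(2),(3): $D=|\det(R)|\,K_S^2$ is a nonzero square.

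The last point, (6), which I expect to be the main obstacle, is pinning down the generator $M$. Write $R^\perp=\langle v\rangle$. An orthogonal complement is always primitive, so $v$ is a primitive vector of the unimodular lattice $Pic(S')$, and $x\mapsto x\cdot v$ is a surjection $Pic(S')\to\mathbb{Z}$ vanishing on $R$; since $Pic(S')/R\cong H^2(S^0,\mathbb{Z})\cong\mathbb{Z}$, it induces an isomorphism $Pic(S')/R\xrightarrow{\sim}\mathbb{Z}$. Choosing $M\in Pic(S')$ with $M\cdot v=1$ gives $Pic(S')=R+\mathbb{Z}M$, so $Pic(S')$ is generated by the exceptional curves and $M$. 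Writing $M=M_R+M_\perp$ in $Pic(S')\otimes\mathbb{Q}=(R\otimes\mathbb{Q})\oplus\mathbb{Q}v$, the condition $M\cdot v=1$ forces $M_\perp=v/v^2$; and since $f^*K_S\in R^\perp\otimes\mathbb{Q}$, $(f^*K_S)^2=K_S^2$, and $v^2=|{\rm disc}(R^\perp)|=|{\rm disc}(R)|=|\det(R)|$, a short computation using $D=|\det(R)|\,K_S^2$ gives $f^*K_S=\pm(\sqrt{D}/v^2)\,v$, so after fixing the sign of $v$ one has $M_\perp=\frac{1}{\sqrt{D}}f^*K_S$. Finally $M_R\in R^\vee$, and since the image of $Pic(S')$ in ${\rm disc}(R)=R^\vee/R$ has order $[Pic(S'):R\oplus R^\perp]=|\det(R)|=|{\rm disc}(R)|$, it is all of ${\rm disc}(R)$, so $z:=[M_R]$ is a generator; writing $z$ through ${\rm disc}(R)=\bigoplus_p{\rm disc}(R_p)$ as $z=\sum_p b_p e_p$ with $e_p$ a generator of ${\rm disc}(R_p)$ completes (6). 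The delicate part is this last step — correctly matching the gluing data $R^\vee/R\cong(R^\perp)^\vee/R^\perp$ of the primitive embedding with the geometric class $f^*K_S$ and fixing the normalization $\tfrac{1}{\sqrt{D}}f^*K_S$; once $p_g(S')=0$ (hence $Pic(S')=H^2(S',\mathbb{Z})$) is in hand, the rest is formal.
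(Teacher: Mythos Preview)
Your treatment of (1), (2), (3), (5), and (6) is correct and essentially matches the paper's argument (the paper defers (1)--(3) to a reference and proves (6) via the isotropic subgroup $Pic(S')/(R\oplus R^\perp)\subset{\rm disc}(R)\oplus{\rm disc}(R^\perp)$, which is the same gluing you describe). Your preliminary step $p_g(S')=q(S')=0$, hence $Pic(S')=H^2(S',\mathbb{Z})$, is also fine.

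There is, however, a genuine gap in your proof of (4). Your two computations of $K_{S'}^2$ are the \emph{same} computation, not opposite inequalities. From $K_{S'}\equiv f^*K_S-\sum_p\mathcal{D}_p$ one has $\mathcal{D}_pK_{S'}=-\mathcal{D}_p^2$, so the paper's identity $K_S^2=K_{S'}^2+\sum_p\mathcal{D}_pK_{S'}$ rearranges to $K_{S'}^2=K_S^2+\sum_p\mathcal{D}_p^2$, not $K_S^2-\sum_p\mathcal{D}_p^2$ as you wrote. When $K_S\equiv 0$ both of your displayed formulas collapse to $K_{S'}^2=\sum_p\mathcal{D}_p^2\le 0$, and no contradiction emerges; in particular you cannot conclude $\mathcal{D}_p=0$ this way.

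The paper's argument for (4) instead exploits the primitivity you already proved in (2). If $K_S\equiv 0$ then $K_{S'}\equiv-\sum_p\mathcal{D}_p$ is an integral class lying in $R\otimes\mathbb{Q}$; since $R$ is primitive in $H^2(S',\mathbb{Z})$ this forces $K_{S'}\in R$, hence each $\mathcal{D}_p\in R_p$. But the coefficients of $\mathcal{D}_p$ lie in $[0,1)$, so $\mathcal{D}_p=0$ for every $p$ and $K_{S'}\equiv 0$. Now your Enriques conclusion applies (you have $p_g=q=0$, $b_1=0$), and the $2$-torsion in $H^2$ of an Enriques surface contradicts (1). Replacing your circular self-intersection step with this primitivity argument fixes the proof.
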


\begin{proof} (1), (2) and (3) are easy to see (cf. \cite{Keum}, Proposition 2.3 and Lemma 3.4).

(4) Assume that $K_S$ is numerically trivial.
 Then $S'$
is an Enriques surface if all singularities are rational double
points, and is a rational surface otherwise. If $S'$ is an
Enriques surface, then $H_1(S^0, \mathbb{Z}) \neq 0$ since
$H_1(S', \mathbb{Z}) = \mathbb{Z}/2$ (cf. Proposition 2.3 in
\cite{Keum}). Thus $S$ is a rational surface, and
$$K_{S'} \underset{num}{\equiv} -\underset{p \in Sing(S)}{\sum} \mathcal{D}_p$$
with $\mathcal{D}_p\underset{num}{\not\equiv} 0$ for some $p$.
Note that $\mathcal{D}_p$ defines an element of $R_p^*:={\rm
Hom}(R_p, \mathbb{Z})$ and the discriminant group ${\rm
disc}(R_p):=R_p^*/R_p$ has order $|\det(R_p)|$. Thus\\
$|\det(R_p)|\mathcal{D}_p\in R_p$ but $\mathcal{D}_p\notin R_p$ if
$\mathcal{D}_p\underset{num}{\not\equiv} 0$. Now we  see that
$$\big(\underset{p}{\prod}
|\det(R_p)| \big) K_{S'} \in R \subset H^2(S', \mathbb{Z}),$$ but
$K_{S'} \notin R$. Hence  the primitive closure $\bar{R}$ of $R$
in $H^2(S', \mathbb{Z})$ is not equal to $R$. Now by Lemma 2.5 in
\cite{Keum}, $H_1(S^0, \mathbb{Z}) \neq 0$.

(5) follows from (4) and Lemma \ref{detR}.

(6) Note first that $Pic(S')=H^2(S', \mathbb{Z})$ and the
sublattice $R\subset H^2(S', \mathbb{Z})$ generated by the
exceptional curves is a primitive sublattice of corank 1. Let
$R^\perp\subset H^2(S', \mathbb{Z})$ be the orthogonal complement
of $R$. Note that $R^\perp$ is positive definite and of rank $1$.
Since $H^2(S', \mathbb{Z})$ is unimodular,
$$\det(R^\perp) =|\det(R)|= \underset{p \in Sing(S)}{\prod} |\det(R_p)|.$$
Note that $f^*K_S \in R^\perp$. Thus $R^\perp$ is generated by
$$v:=\frac{|\det(R)|}{\sqrt{D}}f^* K_S,$$ and ${\rm disc}(R^\perp)$ is
generated by $$\frac{1}{\sqrt{D}}f^* K_S.$$ Also note that
$${\rm disc}(R^\perp\oplus R)\cong (\mathbb{Z}/|\det(R)|)\oplus(\mathbb{Z}/|\det(R)|).$$
Thus $Pic(S')/(R^\perp\oplus R)$ is an isotropic subgroup of ${\rm
disc}(R^\perp\oplus R)$ of order $|\det(R)|$, hence is generated
by an element $M\in {\rm disc}(R^\perp\oplus R)$ of order
$|\det(R)|$. Moreover $M$ is the sum of a generator of ${\rm
disc}(R^\perp)$ and a generator of ${\rm disc}(R)$, since
$Pic(S')$ is unimodular. By replacing $M$ by $kM$ for a suitable
choice of an integer $k$, we get $M$ of the desired form
 $$M = \frac{1}{\sqrt{D}} f^*K_S +  \underset{p \in Sing(S)}{\sum}
b_p e_p$$ for some integers $b_p$ with $0\le b_p < |\det(R_p)|$,
where ${\sum} b_p e_p$ is a generator of ${\rm disc}(R)$. This
proves that $Pic(S')$ is generated over $\mathbb{Z}$ by $R$, $v$
and $M$. Finally, note that $$|\det(R)|M=v\quad ({\rm mod}\,\,
R),$$ i.e., $v$ is generated by $M$ and $R$. Thus $Pic(S')$ is
generated over $\mathbb{Z}$ by $R$ and $M$.
\end{proof}

\section{Proof of Theorem \ref{class}}

Let $S$ be a $\mathbb{Q}$-homology projective plane with 4 or more
quotient singularities with $H_1(S^0, \mathbb{Z}) = 0$. By Lemma
\ref{coprime}(3), the orders of the abelianized local fundamental
groups are pairwise relatively prime. Thus by Theorem \ref{bmy2},
one can see that $S$ has $4$ singular points and the $4$-tuple of
orders of the local fundamental groups must be one of the
following:

\begin{enumerate}
\item $(2,3,5,q)$, $q \geq 7$,
\item $(2,3,7,q)$, $11 \leq q \leq 41$,
\item $(2,3,11,13)$.
\end{enumerate}

Table 1 shows that all non-cyclic singularities of type different
from $I_m$ have abelianized local fundamental groups of order
divisible by 2 or 3.

Assume that one of the singularities is non-cyclic. By Lemma
\ref{coprime}(3), it must be of type $I_m$ and the other 3
singularities are cyclic of order 2, 3 and 5, respectively. Here
we recall that $I_m\subset GL(2,\mathbb{C})$ is the $2m$-ary
icosahedral group of order $120m$. Table 1 shows that there are
$8$ infinite cases of type $I_m$.

There are two types of order 3, $<3,2>$ and $<3,1>$; three types
of order 5, $<5,4>$, $<5,3>\cong <5,2>$ and $<5,1>$.
 Thus there are exactly $48$ infinite cases for
 possible combinations of types of singularities. That is, there
are exactly $48$ infinite cases for $R$, the sublattice of
$Pic(S')=H^2(S', \mathbb{Z})$ generated by all exceptional curves,
where $f:S'\to S$ is a minimal resolution. In each of the 48 cases
we compute $D=|\det(R)|K_S^2$ and check if $D$ is a square number
(see Lemma \ref{coprime}(5)), using elementary number theoretic
arguments. There remain 8 infinite cases and 2 sporadic cases, as
given in Table \ref{det1} and Table \ref{det2}. In both tables,
the entries of the column $b$ are the possible values of $b$ that
make $D$ a square number.

We will explain how to compute $D$. First note that $$|\det(R)|=2
\cdot 3\cdot 5\cdot m=30m.$$  To compute $K^2_S$, we use the
equality from (2.2)
\[K^2_S = K^2_{S'} + \sum_{p}{\mathcal{D}_pK_{S'}}.
\]
Note that $S'$ has $H^1(S', \mathcal{O}_{S'})=H^2(S',
\mathcal{O}_{S'})=0$. Thus by Noether formula,
\[K^2_{S'}= 12- e(S')=10-b_2(S')=9-\mu,\]
where $\mu$ is the number of the exceptional curves of $f$.\\ For
each singular point $p$, the coefficients of the
$\mathbb{Q}$-divisor $\mathcal{D}_p$ can be obtained by
 solving the equations given by the adjunction formula
$$\mathcal{D}_pE=-K_{S'}E=2+E^2$$ for each exceptional curve
$E\subset f^{-1}(p)$. Once we know the coefficients, we can easily
compute the intersection number $\mathcal{D}_pK_{S'}$.

\small

\begin{table}[ht]
\caption{}\label{det1}
\renewcommand\arraystretch{1.5}
 \noindent\[
\begin{array}{|c|c|c|}
\hline
\textrm{Type of $R$} & D=|det(R)|K^2_S  & b\\
 \hline

<2,1>+<3,2>+<5,4>+<b;2,1;3,2;5,4> & 180(5b^2-50b+79) & $none$ \\
<2,1>+<3,2>+<5,4>+ <b;2,1;3,2;5,3> & 180(5b^2-36b+48) &  b =8\\
<2,1>+<3,2>+<5,4>+  <b;2,1;3,1;5,4> &180(5b^2-40b+52) & $none$ \\
<2,1>+<3,2>+<5,4>+<b;2,1;3,2;5,2> & 180(5b^2-34b+41) & $none$ \\
<2,1>+<3,2>+<5,4>+ <b;2,1;3,1;5,3> & 180(5b^2-26b+27) &  $none$ \\
<2,1>+<3,2>+<5,4>+ <b;2,1;3,2;5,1> & 180(5b^2-20b+18) &  $none$ \\
<2,1>+<3,2>+<5,4>+<b;2,1;3,1;5,2> &180(5b^2-24b+22) &   $none$ \\
<2,1>+<3,2>+<5,4>+ <b;2,1;3,1;5,1> &900(b-1)^2  & b \geq 2\\
\hline

<2,1>+<3,2>+<5,3>+<b;2,1;3,2;5,4> & 36(25b^2-190b+277)  &  $none$ \\
<2,1>+<3,2>+<5,3>+ <b;2,1;3,2;5,3> & 36(25b^2-120b+134) &  $none$ \\
<2,1>+<3,2>+<5,3>+ <b;2,1;3,1;5,4> &  36(25b^2-140b+162)  &  $none$ \\
<2,1>+<3,2>+<5,3>+<b;2,1;3,2;5,2> & 36(25b^2-110b+111) &  $none$ \\
<2,1>+<3,2>+<5,3>+ <b;2,1;3,1;5,3> &  36(5b-7)^2  & b \geq 2\\
<2,1>+<3,2>+<5,3>+ <b;2,1;3,2;5,1> &  36(25b^2-40b+8)  &  $none$ \\
<2,1>+<3,2>+<5,3>+<b;2,1;3,1;5,2> &   36(5b-6)^2  & b \geq 2\\
<2,1>+<3,2>+<5,3>+ <b;2,1;3,1;5,1> & 36(25b^2+10b-37)  & $none$  \\
\hline

<2,1>+<3,2>+<5,1>+<b;2,1;3,2;5,4> & 36(25b^2-130b+159)  &  $none$ \\
<2,1>+<3,2>+<5,1>+ <b;2,1;3,2;5,3> & 36(25b^2-60b+28)  & $none$  \\
<2,1>+<3,2>+<5,1>+ <b;2,1;3,1;5,4> &  36(5b-8)^2  & b \geq 2 \\
<2,1>+<3,2>+<5,1>+<b;2,1;3,2;5,2> & 36(25b^2-50b+17)  &  $none$ \\
<2,1>+<3,2>+<5,1>+ <b;2,1;3,1;5,3> & 36(25b^2-10b-37)  &  $none$ \\
<2,1>+<3,2>+<5,1>+ <b;2,1;3,2;5,1> & 36(25b^2+20b-74)  & $none$  \\
<2,1>+<3,2>+<5,1>+<b;2,1;3,1;5,2> &  36(25b^2-38)  &  $none$ \\
<2,1>+<3,2>+<5,1>+ <b;2,1;3,1;5,1> & 36(25b^2+70b-99)  & $none$  \\
\hline
\end{array}
\]
\end{table}

\begin{table}[ht]
\caption{} \label{det2}
\renewcommand\arraystretch{1.5}
\noindent\[
\begin{array}{|c|c|c|}
\hline
\textrm{Type of $R$} & D=|det(R)|K^2_S  & b\\
 \hline

<2,1>+<3,1>+<5,4>+<b;2,1;3,2;5,4> &20(45b^2-390b+593)  & $none$  \\
<2,1>+<3,1>+<5,4>+ <b;2,1;3,2;5,3> &20(45b^2-264b+326)  & $none$ \\
<2,1>+<3,1>+<5,4>+  <b;2,1;3,1;5,4> & 100(9b^2-60b+74)  &  $none$ \\

<2,1>+<3,1>+<5,4>+<b;2,1;3,2;5,2> &20(45b^2-246b+275)  &  $none$ \\
<2,1>+<3,1>+<5,4>+ <b;2,1;3,1;5,3> &20(45b^2-174b+157)  &  $none$ \\
<2,1>+<3,1>+<5,4>+ <b;2,1;3,2;5,1> &100(3b-4)^2  & b \geq 2 \\
<2,1>+<3,1>+<5,4>+<b;2,1;3,1;5,2> & 20(45b^2-156b+124)  & $none$  \\
<2,1>+<3,1>+<5,4>+ <b;2,1;3,1;5,1> & 20(45b^2-30b-17)  & $none$  \\
\hline

<2,1>+<3,1>+<5,3>+<b;2,1;3,2;5,4> &4(225b^2-1410b+1903)  & $none$ \\
<2,1>+<3,1>+<5,3>+ <b;2,1;3,2;5,3> &4(15b-26)^2  &  b \geq 2\\
<2,1>+<3,1>+<5,3>+ <b;2,1;3,1;5,4> & 4(225b^2-960b+968)  & $none$ \\
<2,1>+<3,1>+<5,3>+<b;2,1;3,2;5,2> & 4(15b-23)^2  &  b \geq 2\\
<2,1>+<3,1>+<5,3>+ <b;2,1;3,1;5,3> & 4(225b^2-330b+11)  & $none$ \\
<2,1>+<3,1>+<5,3>+ <b;2,1;3,2;5,1> & 4(225b^2-60b-338)  &  $none$ \\
<2,1>+<3,1>+<5,3>+<b;2,1;3,1;5,2> &  4(225b^2-240b-46)  &  $none$ \\
<2,1>+<3,1>+<5,3>+ <b;2,1;3,1;5,1> &  4(225b^2+390b-643)  &  $none$ \\
\hline

<2,1>+<3,1>+<5,1>+<b;2,1;3,2;5,4> &4(15b-29)^2  &  b \geq 2\\
<2,1>+<3,1>+<5,1>+ <b;2,1;3,2;5,3> & 4(225b^2-240b-278)  & $none$  \\
<2,1>+<3,1>+<5,1>+ <b;2,1;3,1;5,4> & 4(225b^2-420b+86)  & $none$  \\
<2,1>+<3,1>+<5,1>+<b;2,1;3,2;5,2> &4(225b^2-150b-317)  &  $none$ \\
<2,1>+<3,1>+<5,1>+ <b;2,1;3,1;5,3> &4(225b^2+210b-763)  &  $none$ \\
<2,1>+<3,1>+<5,1>+ <b;2,1;3,2;5,1> & 4(225b^2+480b-1076)  & b=2\\
<2,1>+<3,1>+<5,1>+<b;2,1;3,1;5,2> & 4(225b^2+300b-712)  &   $none$ \\
<2,1>+<3,1>+<5,1>+ <b;2,1;3,1;5,1> & 4(225b^2+930b-1201)  &  $none$ \\
\hline
\end{array}
\]
\end{table}

\normalsize

We first rule out the two sporadic cases.

\begin{lemma}[]\label{spor1} The case $<2,1>+<3,2>+<5,4>+ <8;2,1;3,2;5,3>$ does not occur.
\end{lemma}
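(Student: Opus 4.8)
\emph{Plan and setup.} I would derive a congruence obstruction from the distinguished class $M$ of Lemma \ref{coprime}(6), rather than carry out the $(-1)$-curve analysis announced in the introduction; in this case that gives the quickest proof. By Table 1 the dual graph $<8;2,1;3,2;5,3>$ is of type $I_m$ with $m=30(8-2)+7=187$, so the orders of the abelianized local fundamental groups are $2,3,5$ and $m=187$ (pairwise coprime, as forced by Lemma \ref{coprime}(3)) and $|\det R|=2\cdot3\cdot5\cdot187=5610$. Moreover $<3,2>=\tfrac13(1,2)$ has resolution graph the chain $[2,2]$, so $R_{<3,2>}$ is the negative-definite $A_2$-lattice; a direct computation of its dual lattice gives, for a generator $e$ of ${\rm disc}(R_{<3,2>})=\mathbb{Z}/3$, the value $e^2\equiv-\tfrac23\pmod{\mathbb{Z}}$, independent of the choice of generator since $t^2\equiv1\pmod3$ for every $t$ prime to $3$. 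Recall also $D=|\det R|\,K_S^2$ by Lemma \ref{coprime}(5).

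\emph{The obstruction.} By Lemma \ref{coprime}(6) the class $M=\tfrac1{\sqrt D}f^*K_S+z$ lies in the integral lattice $Pic(S')$, where $z=\sum_{p}b_p e_p$ is a generator of ${\rm disc}(R)$ and $e_p$ generates ${\rm disc}(R_p)$. Using $f^*K_S\in R^{\perp}$, $(f^*K_S)^2=K_S^2$ and $K_S^2/D=1/|\det R|$, we get
\[
\mathbb{Z}\ \ni\ M^2\ =\ \frac{1}{|\det R|}+\sum_{p}b_p^2\,e_p^2,\qquad\text{hence}\qquad \sum_{p}b_p^2\,e_p^2\ \equiv\ -\frac{1}{5610}\pmod{\mathbb{Z}}.
\]
I would then read off the $3$-primary parts of the two sides in $\mathbb{Q}/\mathbb{Z}$. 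On the left, $<3,2>$ is the only point whose order is divisible by $3$, so the remaining summands (with denominators prime to $3$) contribute nothing to the $3$-primary part; and $z$ being a generator of ${\rm disc}(R)=\mathbb{Z}/5610$ forces $b_{<3,2>}$ prime to $3$, so $b_{<3,2>}^2\equiv1\pmod3$ and the $3$-primary part of the left side equals $b_{<3,2>}^2 e^2\equiv-\tfrac23\equiv\tfrac13\pmod{\mathbb{Z}}$. On the right, $5610=3\cdot1870$ with $1870\equiv1\pmod3$, so the $3$-primary part of $-\tfrac1{5610}$ is $-\tfrac13\equiv\tfrac23\pmod{\mathbb{Z}}$. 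Hence $\tfrac13\equiv\tfrac23\pmod{\mathbb{Z}}$, which is absurd, and the case does not occur.

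\emph{Where the difficulty lies.} The only step using the specific data of this case is the computation of the $3$-primary part of the left-hand side: it relies on the cyclic point of order $3$ being $\tfrac13(1,2)$, on the corresponding $I_m$-branch being $<3,2>$, and on $3$ not dividing $m$; this is exactly the place where the failure of the Brieskorn-type matching of Theorem \ref{class}(1) is felt, and for $m$ in the other residue class modulo $3$ the obstruction would disappear. The argument above is self-contained given the results quoted. If instead one follows the geometric method behind \cite{K08}: from $K_S^2=\tfrac{480}{187}>3\,e_{orb}(S)$ together with Theorem \ref{bmy} and Lemma \ref{coprime}(4) one gets that $-K_S$ is ample, hence (Lemma \ref{-k}) every irreducible curve on $S'$ not contracted by $f$ has self-intersection $\ge-1$; one then enumerates the $(-1)$-curves of $S'$ in the basis of Lemma \ref{coprime}(6), contracts them to a Hirzebruch surface, and reads off the same incompatibility from the blown-up points on the three singular fibres — there the $(-1)$-curve bookkeeping is the genuinely laborious part, and it would be the main obstacle.
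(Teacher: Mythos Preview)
Your proof is correct and takes a genuinely different route from the paper. The paper proceeds geometrically: it computes $K_S^2=\tfrac{480}{187}$, uses the orbifold BMY inequality to conclude that $-K_S$ is ample, and then analyzes a $(-1)$-curve $C$ on $S'$ via the decomposition $C=\tfrac{k}{120}f^*K_S+\sum_iC(i)$; intersecting with \eqref{ks'1}-style adjunction data forces $k=-8$ and specific intersection numbers with $f^{-1}(p_4)$, and then the resulting value of $\sum_{j\le3}C(j)^2$ comes out positive, contradicting negative definiteness. Your argument bypasses all of this by extracting a discriminant-form obstruction directly from $M^2\in\mathbb{Z}$: the $3$-primary part of $z^2$ is governed solely by the $A_2$-summand at the $\langle3,2\rangle$ point and equals $\tfrac13$, while the $3$-primary part of $-\tfrac{1}{5610}$ is $\tfrac23$. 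This is shorter and uses nothing beyond Lemma~\ref{coprime}; it also makes transparent \emph{why} this sporadic case dies, namely because the $\langle3,2\rangle$ branch of the $I_m$-graph is not paired with the complementary type $\langle3,1\rangle$ at the order-$3$ cyclic point, exactly the matching condition in Theorem~\ref{class}(1). The paper's $(-1)$-curve method, on the other hand, is the same machinery that is needed anyway for Proposition~\ref{mainprop1} in the eight surviving families, so there the authors are reusing an argument rather than introducing a separate one; your approach would not rule out those families (as you note, the obstruction vanishes there), so it is specialized to the sporadic cases.
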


\begin{proof} In this case, $m=30(b-2)+7=187$, so
$$|\det(R)|=30\cdot 187.$$
The number of exceptional curves $\mu=13$, so $K_{S'}^2=-4$, where
$f:S' \rightarrow S$ is a minimal resolution. Let
$p_1,p_2,p_3,p_4$ be the four singular points. Let $E_1, \ldots,
E_6$ be the components of $f^{-1}(p_4)$ such that

\medskip
 $$
\begin{picture}(100,40)
\put(0,25){$\overset{-2}E_2-\overset{-2}E_3-\overset{-8}E_6-\overset{-2}E_5-\overset{-3}E_4$}
 \put(60,15){\line(0,0){6}}
 \put(55,5){$\underset{-2}{E_1}$}
\end{picture}
$$

\medskip\noindent
is their dual graph. Solving the equations given by the adjunction
formula, we get
$$K_{S'} = f^*K_S - \frac{93E_1 + 186E_6+ 62E_2 + 124E_3 + 112E_4 +
149E_5}{187}.$$  It is easy to compute that
$$K_S^2= K^2_{S'}+ \frac{186E_6K_{S'}+ 112E_4K_{S'}}{187}=-4+\frac{186\cdot 6+ 112}{187}=\frac{480}{187}.$$
Thus $$D=|\det(R)|K_S^2=120^2.$$ Note that $K_S^2> 3e_{orb}(S)$,
so $-K_S$ is ample by the orbifold Bogomolov-Miyaoka-Yau
inequality. Thus $S'$ is a rational surface, not minimal. Also
note that the divisor $M$ from Lemma \ref{coprime}(6) takes the
form
$$M=\frac{1}{120}f^*K_S
+\underset{p \in Sing(S)}{\sum} a_p e_p.$$ Let $C$ be a
$(-1)$-curve on $S'$. By Lemma \ref{coprime}(6), $C$ can be
written as
$$C= kM +r$$ for some integer
$k$ and some $r\in R$, hence as $$C = \frac{k}{120}f^*K_S+ C(1)
+C(2) +C(3) +C(4),$$ where $C(i)$ is a $\mathbb{Q}$-divisor
supported on $f^{-1}(p_i)$. Note that
$$C^2=(\frac{k}{120}f^*K_S)^2+ C(1)^2 +C(2)^2 +C(3)^2 +C(4)^2.$$
Since $(f^*K_S)C(i)=0$ for all $i$, we have
$$(f^*K_S)C= (f^*K_S)(\frac{k}{120}f^*K_S)=\frac{k}{120}K_S^2=\frac{4k}{187}.$$ Since
$-K_S$ is ample and $C\notin R$, we see that $(f^*K_S)C<0$, hence
$k<0.$ Note that $K_{S'}C=-1$. From the equality
$$K_{S'}C=(f^*K_S)C -\frac{(93E_1 + 186E_6+ 62E_2 + 124E_3
+ 112E_4 + 149E_5)C}{187},$$ we get $$(93E_1 + 186E_6+ 62E_2 +
124E_3 + 112E_4 + 149E_5)C=187+4k.$$ This is possible only if
$$E_6C=E_5C=E_4C=E_3C=0,\quad E_2C=E_1C=1,\quad k=-8.$$ Since $E_jC(4)=E_jC$ for $j=1,...,6$,
we obtain the coefficients of $C(4)$ by solving the equations
given by the above intersection numbers. $$C(4)=-\frac{106E_1 +
133E_2 + 79E_3 + 5E_4 + 15E_5+ 25E_6}{187}=E_1^*+E_2^*,$$ where
$E_j^*\in {\rm Hom}(R_{p_4}, \mathbb{Z})$ is the dual vector of
$E_j$. Thus $$C(4)^2=(E_1^*+E_2^*)C(4)=-\frac{106+133}{187}.$$ Now
we have
$$\underset{j\le 3}{\sum} C(j)^2 =C^2-C(4)^2-(\frac{-8f^*K_S}{120})^2=-1+\frac{239}{187}
-\frac{32}{15\cdot 187}>0$$ which contradicts  the negative
definiteness of exceptional curves.
\end{proof}

\begin{lemma}[]\label{spor2} The case $<2,1>+<3,1>+<5,1>+ <2;2,1;3,2;5,1>$ does not occur.
\end{lemma}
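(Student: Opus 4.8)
The plan is to argue exactly as in the proof of Lemma~\ref{spor1}. Here $b=2$, so the noncyclic point is of type $I_{19}$ (since $m=30(b-2)+19=19$), and $|\det(R)|=30m=570$; the minimal resolution $f\colon S'\to S$ has $\mu=8$ exceptional curves ($1$ over each of the three cyclic points and $5$ over the noncyclic one), so $K_{S'}^2=9-\mu=1$. Write $p_1,p_2,p_3$ for the cyclic points, of types $\tfrac12(1,1)$, $\tfrac13(1,1)$, $\tfrac15(1,1)$, with exceptional curves $F_1,F_2,F_3$ of self-intersection $-2,-3,-5$, and $p_4$ for the point of type $<2;2,1;3,2;5,1>$, whose exceptional divisor is a central $(-2)$-curve $E_c$ meeting a $(-2)$-curve $E_a$, a $(-5)$-curve $E_d$, and a chain $E_{b_1}-E_{b_2}$ of $(-2)$-curves along $E_{b_1}$.

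First I would solve the adjunction equations $\mathcal{D}_pE=2+E^2$ for the discrepancy divisors, obtaining $\mathcal{D}_{p_1}=0$, $\mathcal{D}_{p_2}=\tfrac13F_2$, $\mathcal{D}_{p_3}=\tfrac35F_3$ and $\mathcal{D}_{p_4}=\tfrac1{19}(9E_a+12E_{b_1}+6E_{b_2}+18E_c+15E_d)$. This gives $K_S^2=K_{S'}^2+\sum_p\mathcal{D}_pK_{S'}=1+0+\tfrac13+\tfrac95+\tfrac{45}{19}=\tfrac{1568}{285}$, so $D=|\det(R)|K_S^2=56^2$, matching Table~\ref{det2}. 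Since $e_{orb}(S)=3-\bigl(\tfrac12+\tfrac23+\tfrac45+(1-\tfrac1{2280})\bigr)=\tfrac{77}{2280}$, one has $K_S^2>3e_{orb}(S)$, so $K_S$ is not nef by Theorem~\ref{bmy}, hence $-K_S$ is ample by Lemma~\ref{coprime}(4). In particular $S'$ is a rational surface, not minimal, and carries a $(-1)$-curve $C$.

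Next, by Lemma~\ref{coprime}(6) one may write $C=kM+r$ with $k\in\mathbb{Z}$, $r\in R$, where $M=\tfrac1{56}f^*K_S+\sum_p a_pe_p$; splitting this as $C=\tfrac{k}{56}f^*K_S+C(1)+C(2)+C(3)+C(4)$ with $C(i)$ a $\mathbb{Q}$-divisor supported on $f^{-1}(p_i)$, orthogonality of the summands gives $(f^*K_S)C=\tfrac{k}{56}K_S^2=\tfrac{28k}{285}$. Since $-K_S$ is ample and $C$ is not $f$-exceptional, this number is negative, so $k<0$. From $K_{S'}C=-1$ and $K_{S'}=f^*K_S-\sum_p\mathcal{D}_p$ one obtains $\sum_p\mathcal{D}_p\cdot C=(f^*K_S)C+1=\tfrac{28k}{285}+1$, i.e., after clearing denominators,
\[
95(F_2C)+171(F_3C)+135(E_aC)+180(E_{b_1}C)+90(E_{b_2}C)+270(E_cC)+225(E_dC)=28k+285,
\]
where every intersection number on the left is a non-negative integer.

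Finally, I would derive a contradiction from this equation alone. As $k\le-1$ the right-hand side is at most $257$; as the left-hand side cannot be $0$ (since $28\nmid285$) and each of its coefficients is at least $90$, the left-hand side is at least $90$, which forces $k\in\{-1,\dots,-6\}$ and $28k+285\in\{117,145,173,201,229,257\}$. A short enumeration shows that the values not exceeding $257$ that are representable by the left-hand form are exactly $\{90,95,135,171,180,185,190,225,230\}$, a set disjoint from $\{117,145,173,201,229,257\}$. Hence $S'$ has no $(-1)$-curve, a contradiction, and the case $<2,1>+<3,1>+<5,1>+<2;2,1;3,2;5,1>$ does not occur. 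The only substantive work is the bookkeeping: determining the coefficients of $\mathcal{D}_{p_4}$ correctly and carrying out the final enumeration without overlooking a representable value; unlike in Lemma~\ref{spor1}, no negative-definiteness estimate is needed at the end, since the displayed equation already admits no solution with $k<0$.
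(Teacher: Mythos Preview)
Your proof is correct and follows the same line as the paper's: compute the discrepancies, show $-K_S$ is ample via the orbifold BMY inequality, and then intersect a $(-1)$-curve $C$ with the pullback formula for $K_{S'}$ to obtain the displayed Diophantine equation in $k$ and the nonnegative intersection numbers. The paper simply asserts that this equation is impossible for $k<0$, while you carry out the short enumeration explicitly; this is a welcome clarification rather than a different approach.
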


\begin{proof}  The proof is similar to the previous case. In this case, $m=19$ and $\mu=8$, so $|\det(R)|=30\cdot 19$ and $K_{S'}^2=1$.
Let $B_2, B_3$ be the components of $f^{-1}(p_2), f^{-1}(p_3)$.
Let $E_1, \ldots, E_5$ be the components of $f^{-1}(p_4)$ such
that

\medskip
 $$
\begin{picture}(100,40)
\put(0,25){$\overset{-2}E_2-\overset{-2}E_3-\overset{-2}E_5-\overset{-5}E_4$}
 \put(60,15){\line(0,0){6}}
 \put(55,5){$\underset{-2}{E_1}$}
\end{picture}
$$

\medskip\noindent
is their dual graph. Then
$$K_{S'} = f^*K_S - \frac{B_2}{3} - \frac{3B_3}{5} - \frac{9E_1 + 6E_2 + 12E_3 + 15E_4 +
18E_5}{19},$$
$$K_S^2=\frac{28\cdot 56}{15\cdot 19}, \quad
D=|\det(R)|K_S^2=56^2.$$ Here again by the orbifold
Bogomolov-Miyaoka-Yau inequality,  $-K_S$ is ample and $S'$ is a
rational surface, not minimal. Let $C$ be a $(-1)$-curve on $S'$.
Then
$$C= \frac{k}{56}f^*K_S+ C(1) +C(2) +C(3) +C(4)$$ for some integer $k$ and some
$\mathbb{Q}$-divisor $C(i)$ supported on $f^{-1}(p_i)$.\\
Since $(f^*K_S)C=\frac{28k}{285}<0$, we see that $k<0$ and we get
$$95B_2C + 171B_3C + 15(9E_1 + 6E_2 + 12E_3 + 15E_4 +
18E_5)C=285+28k.$$ This is impossible because $k <0$ and $E_jC
\geq 0, B_iC \geq 0$ for every $i, j$.
\end{proof}

\begin{lemma}[]\label{-ample} For any of the $8$ infinite cases, $-K_S$ is ample.
\end{lemma}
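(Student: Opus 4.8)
The plan is to derive a contradiction from Theorem~\ref{bmy} (the orbifold Bogomolov-Miyaoka-Yau inequality) under the assumption that $K_S$ is nef. By Lemma~\ref{coprime}(4) the class $K_S$ is not numerically trivial, so $K_S$ is ample or anti-ample, and it suffices to exclude the case that $K_S$ is ample. If $K_S$ were ample it would in particular be nef, so Theorem~\ref{bmy} would force $K_S^2\le 3\,e_{orb}(S)$; I will show that the reverse strict inequality $K_S^2>3\,e_{orb}(S)$ holds in every one of the $8$ infinite cases, which gives the contradiction.

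First I would write down both sides explicitly. One has $e(S)=3$ (the minimal resolution $S'$ satisfies $q(S')=p_g(S')=0$, so $e(S')=2+b_2(S')=3+\mu$ and $e(S)=e(S')-\mu=3$), and the four singular points have local fundamental groups of orders $2,3,5$ and $|I_m|=120m$, where $m=30(b-2)+c$ is the integer attached to the relevant $I_m$-row of Table~1. Hence $e_{orb}(S)=3-\tfrac12-\tfrac23-\tfrac45-(1-\tfrac{1}{120m})=\tfrac{1}{30}+\tfrac{1}{120m}$, so $3\,e_{orb}(S)=\tfrac{1}{10}+\tfrac{1}{40m}$. On the other side, $|\det(R_p)|=|G_p/[G_p,G_p]|$ equals $2,3,5$ for the three cyclic points and $m$ for the $I_m$-point (Table~1), so $|\det(R)|=30m$, and Lemma~\ref{coprime}(5) gives $K_S^2=D/(30m)$ with $D$ the square value listed in the corresponding row of Table~\ref{det1} or Table~\ref{det2}. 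Clearing denominators, $K_S^2>3\,e_{orb}(S)$ is equivalent to $4D>12m+3$; since $4D$ is even and $12m+3$ is odd, this is in turn equivalent to the clean integer inequality $D\ge 3m+1$, i.e.\ $D>3m$.

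It then remains to check $D>3m$ in each of the $8$ families for every admissible $b\ge2$. In each case $D$ has the form $(\text{positive constant})\cdot\ell(b)^2$ for an increasing affine function $\ell$ with $\ell(2)\ge1$, while $m=30(b-2)+c$ has constant derivative $30$; so $D-3m$ is a quadratic in $b$ with positive leading coefficient. Its derivative $D'(b)$ is affine with positive slope, and a case-by-case computation shows $D'(2)\ge120>90=3\cdot30$, so $D-3m$ is strictly increasing on $[2,\infty)$; and $D-3m>0$ already at $b=2$ in all $8$ cases, the tightest being $<2,1>+<3,1>+<5,1>+<b;2,1;3,2;5,4>$, for which $b=2$ forces $m=1$ (the $E_8$, i.e.\ binary icosahedral, singularity) and $D=4>3=3m$, giving $K_S^2=\tfrac{2}{15}>\tfrac18=3\,e_{orb}(S)$. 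This contradicts $K_S^2\le3\,e_{orb}(S)$, so $K_S$ cannot be ample, and therefore $-K_S$ is ample.

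The only genuine difficulty is this last bookkeeping at small $b$: for large $b$ the quantity $K_S^2$ grows linearly in $b$ while $3\,e_{orb}(S)\to\tfrac{1}{10}$, so the inequality is hugely slack and automatic; for $b=2$ (and occasionally $b=3$) it is tight, the $E_8$ subcase above showing that $K_S^2-3\,e_{orb}(S)$ can be as small as $\tfrac{2}{15}-\tfrac18=\tfrac{1}{120}$. So the real content of the proof is the verification that strict inequality survives in those finitely many tight instances; everything else is automatic.
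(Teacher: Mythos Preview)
Your proof is correct and follows essentially the same route as the paper: both establish $K_S^2>3\,e_{orb}(S)$ in each of the eight families and then invoke Theorem~\ref{bmy} to rule out $K_S$ nef, hence ample. The paper computes $K_S^2$ directly for each family (its Table~\ref{K2}), records the minimum value at $b=2$, and compares against the uniform bound $e_{orb}(S)\le\tfrac{5}{120}$; your version instead reformulates the inequality as the integer statement $D>3m$ via $K_S^2=D/(30m)$, then argues monotonicity of $D-3m$ in $b$ and checks $b=2$. These are the same argument in slightly different bookkeeping, with your integer reduction perhaps a touch cleaner and your monotonicity step made more explicit than in the paper.
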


\begin{proof} For the $8$ infinite cases, we compute $K_S^2$ as follows.

\begin{table}[ht]
\caption{}\label{K2}
\renewcommand\arraystretch{1.5}
\noindent\[
\begin{array}{|c|c|}
\hline
\textrm{Type of $R$} & K^2_S \\
 \hline

<2,1>+<3,2>+<5,4>+ <b;2,1;3,1;5,1> & \frac{30(b-1)^2}{30b-31} \geq \frac{30}{29}\\
 <2,1>+<3,2>+<5,2>+ <b;2,1;3,1;5,3> & \frac{6(5b-7)^2}{5(30b-43)}\geq \frac{54}{85} \\
 <2,1>+<3,2>+<5,3>+<b;2,1;3,1;5,2> & \frac{6(5b-6)^2}{5(30b-37)} \geq \frac{96}{115}\\
<2,1>+<3,2>+<5,1>+ <b;2,1;3,1;5,4> & \frac{6(5b-8)^2}{5(30b-49)} \geq \frac{24}{55}\\
<2,1>+<3,1>+<5,4>+ <b;2,1;3,2;5,1> & \frac{10(3b-4)^2}{3(30b-41)} \geq \frac{40}{57}\\
<2,1>+<3,1>+<5,2>+ <b;2,1;3,2;5,3> & \frac{2(15b-26)^2}{15(30b-53)} \geq \frac{32}{105}\\
 <2,1>+<3,1>+<5,3>+<b;2,1;3,2;5,2> & \frac{2(15b-23)^2}{15(30b-47)} \geq \frac{98}{195}\\
<2,1>+<3,1>+<5,1>+<b;2,1;3,2;5,4> & \frac{2(15b-29)^2}{15(30b-59)} \geq \frac{2}{15}\\
\hline
\end{array}
\]
\end{table}

In each case,
$e_{orb}(S)=-1+\frac{1}{2}+\frac{1}{3}+\frac{1}{5}+\frac{1}{120m}
\leq \frac{5}{120}$. From the table we see that $K_S^2>
3e_{orb}(S)$, so $-K_S$ is ample by the orbifold
Bogomolov-Miyaoka-Yau inequality.
\end{proof}

This completes the proof of (1) and (2) of Theorem \ref{class}. To
prove the remaining parts, we need to analyze $(-1)$-curves on the
minimal resolution $S'$. Note that by Lemma \ref{-k} $S'$ contains
no $(-n)$-curve with $n\ge 2$ other than the exceptional curves of
$f:S' \rightarrow S$.

The following proposition will be proved case by case in the next
section.

\begin{proposition}[]\label{mainprop1}
If $S$ has $4$ singularities $p_1, p_2, p_3, p_4$ of type $<2,1>$,
$<3,\alpha>$, $<5,\beta>$, $<b;2,1;3,3-\alpha;5,5-\beta>$, $b\ge
2$, respectively, as in Table 4, then there are three mutually
disjoint $(-1)$-curves $C_1, C_2, C_3$ on $S'$ such that
\begin{enumerate}
\item each $C_i$ intersects exactly $2$ components of $f^{-1}(p_1)\cup f^{-1}(p_2)\cup f^{-1}(p_3)\cup f^{-1}(p_4)$ with multiplicity $1$ each,
\item $C_1$ intersects the component of the branch $<2,1>$ of $f^{-1}(p_4)$ and the component of $f^{-1}(p_1)$,
$C_2$ intersects the terminal component of the branch
$<3,3-\alpha>$ of $f^{-1}(p_4)$ and one end component of
$f^{-1}(p_2)$, and $C_3$ intersects the terminal component of the
branch $<5,5-\beta>$ of $f^{-1}(p_4)$ and one end component of
$f^{-1}(p_3)$ which is a $(-2)$-curve if $\beta=2$ or $4$, a
$(-3)$-curve if $\beta=3$, and a $(-5)$-curve if $\beta=1$.
\end{enumerate}
\end{proposition}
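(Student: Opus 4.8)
The plan is to exhibit the three curves as explicit classes in $Pic(S')$ and then verify that they are represented by mutually disjoint $(-1)$-curves, the argument being carried out case by case over the $8$ infinite families of Tables \ref{det1} and \ref{det2}. By Lemma \ref{-ample}, $-K_S$ is ample, so $-f^*K_S$ is nef and big, $S'$ is a rational surface, and by Lemma \ref{-k} the only irreducible curves on $S'$ with negative self-intersection are the $\mu$ exceptional curves of $f$. By Lemma \ref{coprime}(6), $Pic(S')=R+\mathbb{Z}M$ with $M=\frac{1}{\sqrt{D}}f^*K_S+z$; hence, exactly as in the proofs of Lemmas \ref{spor1} and \ref{spor2}, every class $C$ with $C^2=CK_{S'}=-1$ has the form
$$C=\frac{k}{\sqrt{D}}f^*K_S+C(1)+C(2)+C(3)+C(4),\qquad k\in\mathbb{Z},$$
with each $C(i)$ a $\mathbb{Q}$-divisor supported on $f^{-1}(p_i)$. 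Since $(f^*K_S)C(i)=0$ one gets $(f^*K_S)C=\frac{k}{\sqrt{D}}K_S^2$ and $C^2=\frac{k^2K_S^2}{D}+\sum_iC(i)^2$, and, writing $K_{S'}=f^*K_S-\sum_p\mathcal{D}_p$, the condition $K_{S'}C=-1$ becomes the linear relation $\sum_p\mathcal{D}_pC=1+\frac{k}{\sqrt{D}}K_S^2$ among the non-negative integers $E_jC$.

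For each of the $8$ cases I would first record the data needed: the dual graphs of $p_1,\dots,p_4$ as functions of $b$; the $\mathbb{Q}$-divisors $\mathcal{D}_p$ obtained by solving $\mathcal{D}_pE=2+E^2$ on each graph; the value of $K_S^2$ from Table \ref{K2} and $|\det R|=30m$; and an explicit generator $z$ of the cyclic group ${\rm disc}(R)\cong\bigoplus_p{\rm disc}(R_p)$. I would then prescribe the three candidate classes by their intersection numbers with the exceptional curves, as in the statement: $C_1$ meets, with multiplicity one, the $(-2)$-curve of the $<2,1>$-branch of $f^{-1}(p_4)$ and the $(-2)$-curve $f^{-1}(p_1)$; $C_2$ meets the terminal component of the $<3,3-\alpha>$-branch of $f^{-1}(p_4)$ and an end component of $f^{-1}(p_2)$; $C_3$ meets the terminal component of the $<5,5-\beta>$-branch of $f^{-1}(p_4)$ and the end component of $f^{-1}(p_3)$ named there; all other intersection numbers with exceptional curves being zero. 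Solving $E_jC(i)=E_jC$ on each graph pins down $C(i)\in R_{p_i}^*$, the linear relation above then pins down $k$, and one checks $k<0$ (equivalently that the sum of the two relevant coefficients of the $\mathcal{D}_p$ is less than $1$) together with $C^2=-1$ and $K_{S'}C=-1$. The essential verification is that the class so obtained is integral, i.e.\ actually lies in $Pic(S')=R+\mathbb{Z}M$; this is where the explicit form of $z$ is used.

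To finish I would argue as follows. Since $-f^*K_S$ is nef and $(K_{S'}-C_i)\cdot(-f^*K_S)=\left(\frac{k}{\sqrt{D}}-1\right)K_S^2<0$ (as $k<0$), the class $K_{S'}-C_i$ is not effective, so by Serre duality $h^2(\mathcal{O}_{S'}(C_i))=h^0(\mathcal{O}_{S'}(K_{S'}-C_i))=0$; since $S'$ is rational, Riemann--Roch then gives $h^0(\mathcal{O}_{S'}(C_i))\ge\chi(\mathcal{O}_{S'}(C_i))=1$, so $C_i$ is effective. That this effective class is in fact a single irreducible $(-1)$-curve realizing the prescribed intersection pattern is then checked in each case: writing $C_i=\Gamma+\Delta$ with $\Delta\ge0$ supported on the exceptional locus and $\Gamma$ the non-exceptional part, one uses the inequalities $C_i\cdot E_j\ge0$, $C_i^2=C_iK_{S'}=-1$, Lemma \ref{-k}, and the negative-definiteness of $R$ to exclude every possibility other than $\Delta=0$, $\Gamma$ irreducible. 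Finally, for $i\ne j$ one has $C_iC_j=\frac{k_ik_jK_S^2}{D}+\sum_pC_i(p)C_j(p)$; the terms over $p_1,p_2,p_3$ vanish because the three curves meet pairwise disjoint parts of those fibres, and the terms over $p_4$ cancel the term $\frac{k_ik_jK_S^2}{D}$, so $C_iC_j=0$.

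The conceptual content is modest; the real work — and the main obstacle — is the case-by-case analysis over $8$ infinite families with $b$ a free parameter: solving the adjunction systems for the $\mathcal{D}_p$ and for the $C(i)$, verifying integrality of all three classes in every case (equivalently, producing $z$ and checking the residues modulo the $|\det R_p|$), and establishing that the three curves can be chosen simultaneously disjoint rather than merely each existing. Minor adjustments to the arithmetic are needed when $p_2$ or $p_3$ is a rational double point (so the corresponding $\mathcal{D}_p$ vanishes), and in the $\beta=1$ subcase in which $C_3$ must meet a $(-5)$-curve rather than a $(-2)$- or $(-3)$-curve, but these follow the same scheme.
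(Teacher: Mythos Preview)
Your plan is a genuine alternative to the paper's argument, but the two routes differ in a way worth spelling out, and your sketch has a soft spot at the irreducibility step.

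\medskip
\textbf{What the paper does.} The paper works case by case, but in the opposite direction from you. In each of the eight families it first proves a \emph{classification}: any irreducible $(-1)$-curve $C$ on $S'$ meeting $f^{-1}(p_4)$ must fall into one of exactly three intersection patterns (a), (b), (c) (Claims 4.$X$.1 and 4.$X$.2), obtained by intersecting $C$ with the explicit expression for $K_{S'}$ and then using Lemma~4.1 to pin down the $C(i)$. \emph{Existence} is then established without Riemann--Roch by a blow-down argument (Claim 4.1.3): since $-K_S$ is ample, Lemma~\ref{-k} says the only negative curves on $S'$ are the $\mu$ exceptional curves and $(-1)$-curves; blowing $S'$ down to $F_n$ or $\mathbb{P}^2$, any component of $f^{-1}(p_4)$ not met by a $(-1)$-curve survives unchanged, and if fewer than three $(-1)$-curves meet $f^{-1}(p_4)$ the surviving components span a negative-definite sublattice of rank $\ge 2$ in the Picard lattice of the minimal model, which is impossible. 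Disjointness is then the same direct computation you outline.

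\medskip
\textbf{Where your approach is vulnerable.} Your Riemann--Roch step is fine; the issue is the passage from ``$|C_i|\neq\varnothing$'' to ``$C_i$ is an irreducible $(-1)$-curve with the prescribed intersection numbers''. Writing a member as $\Gamma+\Delta$ with $\Delta$ exceptional, the constraint $K_{S'}C_i=-1$ together with $K_{S'}\Gamma_k\le -1$ and $K_{S'}E_j\ge 0$ forces, in the best case, $\Gamma$ to be a single $(-1)$-curve and $\Delta$ to be supported on $(-2)$-curves; but it does not by itself force $\Delta=0$, nor does it tell you which $(-1)$-curve $\Gamma$ is. To conclude $\Gamma=C_i$ you need to know that any $(-1)$-curve with the same value of $k$ (equivalently the same $f^*K_S$-degree) already has the same intersection pattern with the exceptional locus --- and that is precisely the content of the paper's Claims 4.$X$.1--4.$X$.2. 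In other words, your irreducibility check smuggles the paper's classification back in; without it, configurations like $C_i=\Gamma+E$ with $E$ a $(-2)$-curve and $\Gamma\cdot E=1$ are not excluded by the coarse inequalities you list. So your route does not save work over the paper's; it reorganises it and adds a Riemann--Roch layer. If you want to keep your strategy, you should either import the classification claims explicitly, or replace the irreducibility sketch by the paper's blow-down argument, which gives existence of three irreducible $(-1)$-curves of types (a), (b), (c) directly.
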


\begin{proposition}[]\label{mainprop2}
\begin{enumerate}
\item The surface $S'$ can be blown down to the Hirzebruch surface
$F_{b-3}$. Conversely, $S'$ can be obtained by starting with
$F_{b-3}$ and blowing up $3$ points lying on a section $s_0$ with
$s_0^2=3-b$.
\item If two rational homology projective planes $S_1$ and $S_2$
have the same type of singularities $<2,1>+<3,\alpha>+<5,\beta>+
<b;2,1;3,3-\alpha;5,5-\beta>$, $b\ge 2$, then $S_1\cong S_2$.
\end{enumerate}
\end{proposition}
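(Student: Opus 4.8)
The plan is to collapse $S'$ onto $F_b$ using the three disjoint $(-1)$-curves $C_1,C_2,C_3$ produced by Proposition~\ref{mainprop1}, and then to read the construction backwards. First, since $-K_S$ is ample by Lemma~\ref{-ample}, $S$ is a log del Pezzo surface, hence rational, and so is $S'$. Write $B_0$ for the central $(-b)$-curve of $f^{-1}(p_4)$; let $\Delta_i\subset f^{-1}(p_4)$ be its $i$-th branch, of type $<2,1>$, $<3,3-\alpha>$, $<5,5-\beta>$ for $i=1,2,3$ respectively; and set $\Gamma_i:=f^{-1}(p_i)$. By Proposition~\ref{mainprop1} the ``arm'' $A_i:=B_0\cup\Delta_i\cup C_i\cup\Gamma_i$ is a chain of smooth rational curves in which $C_i$ is an interior vertex: read from $B_0$ it consists of $\Delta_i$ (beginning with the component adjacent to $B_0$, ending with the component met by $C_i$), then $C_i$, then $\Gamma_i$ attached along the prescribed end component; moreover $A_i\cap A_j=B_0$ for $i\ne j$.

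The heart of the matter is the claim that \emph{each arm $A_i$ contracts, by a sequence of blow-downs of $(-1)$-curves none of which meets $B_0$, onto the chain $B_0\cup\bar F_i$, where $\bar F_i$ is a smooth rational curve with $\bar F_i^2=0$ meeting $B_0$ transversally at one point.} This is purely combinatorial. The branch $\Delta_i$ and the cyclic string $\Gamma_i$ are the Hirzebruch--Jung resolutions of the dual cyclic quotient singularities $\frac1s(1,s-t)$ and $\frac1s(1,t)$; with the orientations dictated by Proposition~\ref{mainprop1}, reading $\Delta_i$ and $\Gamma_i$ outward from $C_i$ yields a Hirzebruch--Jung-dual pair of continued fractions, so that $\Delta_i\cup C_i\cup\Gamma_i$ is such a dual pair of strings joined by the $(-1)$-curve $C_i$. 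A standard calculation (cf.\ \cite{K08}) then shows that such a configuration collapses to a single $0$-curve, and that among the intermediate blow-downs exactly one is of a curve at the far end of the chain; reversing the process, $A_i$ is reconstructed from $B_0\cup\bar F_i$ by first blowing up one point of $\bar F_i$ not on $B_0$ and then blowing up a \emph{determined} sequence of infinitely near points, each being the intersection of two previously created curves. (The claim can be verified uniformly, or checked case by case for the eight types in Tables~\ref{det1}--\ref{det2}.)

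Granting the claim, contract the three arms. The resulting smooth surface $T$ contains $B_0$ together with three mutually disjoint $0$-curves $\bar F_1,\bar F_2,\bar F_3$, each meeting $B_0$ once. Among the $\mu+3$ curves consisting of $C_1,C_2,C_3$ and the $\mu$ exceptional curves of $f$, precisely four survive in $T$ (namely $B_0$ and the three $\bar F_i$), so $\mu-1$ curves are contracted; since $b_2(S')=1+\mu$ (because $K_{S'}^2=9-\mu$), this gives $b_2(T)=2$. Being rational, $T$ is a Hirzebruch surface, and since $B_0^2=-b\le -2$ the curve $B_0$ must be its negative section; hence $T\cong F_b$ and $\bar F_1,\bar F_2,\bar F_3$ are three of its fibres. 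Reading all the blow-downs backwards gives the converse assertion in~(1): $S'$ is obtained from $F_b$ by blowing up inside the three fibres $\bar F_1,\bar F_2,\bar F_3$, the first blow-up over each fibre being at one point of it not on the section. This proves~(1).

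For~(2), the reconstruction just described shows that the data recovering $S'$ from $F_b$ amount to three distinct points on the base $\mathbb P^1$ of $F_b$ (the positions of the three fibres) together with one point on each of the corresponding fibres away from the negative section, all further blow-up centers being forced by the singularity types. Consequently the surfaces $S'$ carrying the prescribed configuration of singularities form an algebraic family $\mathcal X\to U$ over the connected parameter space $U$ of such data, with $\mathcal X_u$ the corresponding iterated blow-up of $F_b$; by~(1) both $S_1'$ and $S_2'$ occur as members. The exceptional configurations of the contractions $\mathcal X_u\to S_u$ have constant combinatorial type and sweep out a divisor $\mathcal E\subset\mathcal X$ flat over $U$, so $\mathcal X\setminus\mathcal E\to U$ is a family of smooth surfaces whose fibre over $u$ is $\mathcal X_u\setminus\mathcal E_u\cong S_u^0$; since $U$ is connected, $S_1^0$ and $S_2^0$ are deformation equivalent, which is~(2). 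The one genuinely nontrivial ingredient is the combinatorial claim of the second paragraph --- that each arm collapses to a single fibre of $F_b$ and that the inverse blow-up sequence is canonical apart from the one free point on that fibre --- and this is precisely where the Hirzebruch--Jung duality between the branch $<s,s-t>$ of $p_4$ and the cyclic singularity $<s,t>$, prepared by Proposition~\ref{mainprop1}, is used; granted it, everything else is formal.
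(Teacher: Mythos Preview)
Your proof is correct and follows essentially the same strategy as the paper's: use the three disjoint $(-1)$-curves from Proposition~\ref{mainprop1} to contract $S'$ down to a Hirzebruch surface, identify it as $F_b$ via the surviving $(-b)$-curve $B_0$, and then observe that the reverse blow-up depends only on the choice of one free point on each of three fibres. The paper's own proof is extremely terse (``By starting with them, we can blow down $S'$ to $F_b$''), whereas you supply the mechanism---the Hirzebruch--Jung duality between each branch $\langle s,s-t\rangle$ of $p_4$ and the corresponding cyclic piece $\langle s,t\rangle$---that explains \emph{why} each arm $\Delta_i\cup C_i\cup\Gamma_i$ collapses to a single $0$-curve without disturbing $B_0$, together with the rank count $b_2(T)=2$ and the explicit family construction over a connected base for part~(2); these are exactly the details a reader of the paper would have to reconstruct.
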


\begin{proof}
(1) By Proposition \ref{mainprop1} there are three mutually
disjoint $(-1)$-curves $C_1$, $C_2$, $C_3$ on $S'$ satisfying
$(1)$ and $(2)$ of Proposition \ref{mainprop1}. By starting with
them, we can blow down $S'$ to $F_{b-3}$ so that the image of the
exceptional curves of the map $S'\to F_{b-3}$ consists of three
points lying on a section $s_0$ with $s_0^2=3-b$. The section
$s_0$ is the unique negative section when $b\ge 4$, and is one of
the general sections when $b\le 3$. When $b=2$, $F_{-1}=F_1$.

(2) The blow-up process from $F_{b-3}$ to $S'$ depends on the
choice of three fibres. The choice of three fibres is unique up to
automorphisms of $F_{b-3}$, and the blow-up process from $F_{b-3}$
to $S'$ is uniquely determined by the type of singularities of
$S$.
\end{proof}

This completes the proof of (3) of Theorem \ref{class}.

The following examples mentioned in Introduction were discussed in
\cite{Kol08}, Example 31.

\begin{example}\label{Brieskorn}
Consider the $2m$-ary icosahedral group $$I_m\subset
GL(2,\mathbb{C})$$ of order $120m$ in Brieskorn's list (Table 1).
Let $Z\subset I_m$ be its center, then $Z\cong \mathbb{Z}_{2m}$
and $I_m/Z\cong \mathfrak{A}_5\subset PGL(2,\mathbb{C})$, the
icosahedral group. Extend the natural $I_m$-action on
$\mathbb{C}^2$ to $\mathbb{CP}^2$. The center acts trivially on
the line at infinity and $\mathbb{CP}^2/Z$ is a cone over the
rational normal curve of degree $2m=|Z|$. Then
$$S_{I_m}:=\mathbb{CP}^2/I_{m}=(\mathbb{CP}^2/Z)/\mathfrak{A}_5$$
has 4 quotient singularities, one of type $\mathbb{C}^2/I_{m}$ at
the origin, three of order 2, 3, 5 at infinity. The fundamental
group of $S_{I_m}^0$ is $\mathfrak{A}_5$. By Theorem \ref{class}
(1), the types of the 3 cyclic singularities are determined by the
types of the 3 branches of the non-cyclic singularity. By
Proposition \ref{mainprop1} and \ref{mainprop2}, its minimal
resolution $S_{I_m}'$ can be blown down to the Hirzebruch surface
$F_{b-3}$, where $b$ is determined by $m$ as in Table 1.
Conversely, $S_{I_m}'$ can be obtained by starting with $F_{b-3}$
and blowing up 3 points lying on section $s_0$ with $s_0^2=3-b$.
\end{example}

This completes the proof of (4) and (5) of Theorem \ref{class}.

\section{Proof of Proposition \ref{mainprop1}}

As before, let  $p_1,p_2,p_3,p_4$ be the singular points of $S$ of
order 2, 3, 5, $120m$, respectively, and let $f:S' \rightarrow S$
be a minimal resolution. Let $R_{p_i}$ be the sublattice of
$H^2(S', \mathbb{Z})$ generated by all exceptional curves
contained in $f^{-1}(p_i)$.

Let $C$ be an irreducible curve on $S'$. By Lemma
\ref{coprime}(6), $C$ can be written as $C=kM+r$ for some integer
$k$ and some $r\in R$, hence as
\begin{equation}\label{C}
C = \frac{k}{\sqrt{D}}f^*K_S+ C(1) +C(2) +C(3) +C(4),
\end{equation}
where $C(i)$ is a $\mathbb{Q}$-divisor supported on $f^{-1}(p_i)$
that is of the form
$$C(i)=a_ie_i + r_i$$
for some integer $a_i$ and some $r_i\in R_{p_i}$, where $e_i$ is a
generator of the discriminant group disc$(R_{p_i})$.

\begin{lemma}[]\label{lem4.1} Let $C$ be an irreducible curve on $S'$ of the form
\eqref{C}.
\begin{enumerate}
\item  $C(i)^2=0$ if and only if $C(i)=0$ if and only if $C$ does not meet
$f^{-1}(p_i)$. \item $C(1)^2=-\frac{1}{2}x$ for some integer $x\ge
0$,\\  $C(1)^2=-\frac{1}{2}$ if and only if $C$ meets with
multiplicity $1$ the component of $f^{-1}(p_1)$ .
\item Assume that $p_2$ is of type $<3,2>$. Then\\ $C(2)^2=-\frac{2}{3}y$ for some
integer $y\ge 0$,\\  $C(2)^2=-\frac{2}{3}$ if and only if $C$
meets with multiplicity $1$ exactly one of the two components of
$f^{-1}(p_2)$.
\item Assume that $p_3$ is of type $<5,4>$.
Then\\ $C(3)^2\le -\frac{4}{5}$ if $C(3)\neq 0$,\\
$C(3)^2= -\frac{4}{5}$ if and only if $C$ meets with multiplicity
$1$ exactly one of the two end components of $f^{-1}(p_3)$.
\end{enumerate}
\end{lemma}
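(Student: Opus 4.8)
The strategy is to reduce everything to a single computation of the self-intersection form of the discriminant group $\mathrm{disc}(R_{p_i})$ restricted to the coset $a_i e_i$, since by \eqref{C} the divisor $C(i)$ is supported on $f^{-1}(p_i)$, hence $C(i)^2$ is determined modulo $\mathbb{Z}$ by the class $a_i e_i \in \mathrm{disc}(R_{p_i})$, while the exact value is obtained by choosing the (unique) representative $C(i)$ with $0 \le C(i)E < \text{(something)}$ for each exceptional $E$ and using that $C(i)$ is \emph{effective} away from $f^{-1}(p_i)$ in the sense that each intersection number $C(i)E = CE \ge 0$ for $E$ an exceptional curve in $f^{-1}(p_i)$. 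So the first step is to record, for each relevant singularity type $\langle 2,1\rangle$, $\langle 3,2\rangle$, $\langle 5,4\rangle$, the intersection matrix, its inverse (the Gram matrix of $R_{p_i}^*$), a generator $e_i$ of the cyclic discriminant group, and the value $e_i^2 \in \mathbb{Q}/2\mathbb{Z}$ (equivalently, the quadratic form on $\mathrm{disc}(R_{p_i})$). For $\langle 2,1\rangle$: one $(-2)$-curve $E$, $R_{p_1}^* $ generated by $E^* = -\tfrac12 E$, with $(E^*)^2 = -\tfrac12$, and $\mathrm{disc}(R_{p_1}) \cong \mathbb{Z}/2$ generated by $e_1 = E^*$. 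For $\langle 3,2\rangle$: chain of two $(-2)$-curves $E, E'$; $\mathrm{disc} \cong \mathbb{Z}/3$, with the generator of minimal norm having square $-\tfrac23 \bmod 2$. For $\langle 5,4\rangle$: chain of four $(-2)$-curves; $\mathrm{disc} \cong \mathbb{Z}/5$.

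Next I would prove (1). The ``only if'' directions are immediate: if $C$ does not meet $f^{-1}(p_i)$ then $C(i) = 0$ by \eqref{C} (the decomposition into orthogonal pieces is unique), and then $C(i)^2 = 0$. For the converse, $R_{p_i}$ is negative definite (stated in \S2.3), so the form on $R_{p_i} \otimes \mathbb{Q}$ is negative definite; hence $C(i)^2 = 0$ forces $C(i) = 0$; and if $C(i) = 0$ then $CE = C(i)E = 0$ for every exceptional $E \subset f^{-1}(p_i)$, so $C$ meets no such component. (That $CE = C(i)E$ uses orthogonality of the other summands in \eqref{C} against classes supported on $f^{-1}(p_i)$, as noted in the excerpt just before Lemma 4.1.)

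Then I would do (2), (3), (4) by the same template, which I will describe for (4) since it is the one with genuine content. Write $C(3) = a\, e_3 + r_3$ with $e_3$ the fixed generator and $r_3 \in R_{p_3}$; the key point is that $C(3) \ne 0$ iff $a \not\equiv 0 \pmod 5$ (since if $a \equiv 0$ then $C(3) \in R_{p_3}$, but then $C(3)$ would be a \emph{negative} combination unless zero — more precisely $C(3) \in R_{p_3}$ and $C(3)E = CE \ge 0$ for all exceptional $E$ in the chain forces $C(3) = 0$ by negative-definiteness of the $A_4$ lattice, exactly as in the sporadic-case arguments). For each nonzero residue $a \in \{1,2,3,4\}$, the minimal value of $(a e_3 + r_3)^2$ over $r_3 \in R_{p_3}$ is computed from the Gram matrix of $R_{p_3}^* = A_4^*$: the minimal-norm vectors in each nonzero coset of $A_4^* / A_4$ are the $E_j^*$ (the four fundamental coweights of $A_4$), and the two of square $-\tfrac45$ are precisely the coweights dual to the two \emph{end} nodes of the chain; the other two have square $-\tfrac65$. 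So $C(3)^2 \le -\tfrac45$ whenever $C(3) \ne 0$, with equality iff $C(3) = E_j^*$ for $j$ an end node, and then unwinding $C(3) = E_j^*$ via $E_k C = E_k C(3) = E_k E_j^* = \delta_{kj}$ shows this happens iff $C$ meets that one end component with multiplicity $1$ and meets no other component of $f^{-1}(p_3)$. The analogous bookkeeping for $\langle 2,1 \rangle$ and $\langle 3,2 \rangle$ gives the cleaner equalities $C(1)^2 = -\tfrac12 x$ and $C(2)^2 = -\tfrac23 y$ because in those cases \emph{every} nonzero coset of $\mathrm{disc}(R_{p_i})$ has a minimal vector of the stated square (there is only one ``shape'' of coweight up to symmetry).

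\medskip
\noindent\textbf{Main obstacle.} The only non-formal point is the combinatorial fact that in $A_4^* / A_4$ the two cosets realized by \emph{end}-node coweights are exactly the ones of minimal square $-\tfrac45$, and that no element of $R_{p_3}$ can decrease these norms — i.e. identifying the short vectors in each coset and ruling out that an effective combination $C(3)$ could have smaller self-intersection. This is a finite check on the $A_4$ lattice (equivalently, computing $4 \times 4$ matrix inverses and minimizing a quadratic form over $\mathbb{Z}^4$ in four residue classes), so it is routine but is where all the actual arithmetic lives; everything else is formal manipulation of the orthogonal decomposition \eqref{C} together with negative-definiteness of $R$ and effectiveness ($CE \ge 0$) of $C$ against the exceptional curves.
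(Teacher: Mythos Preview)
Your approach is the paper's: write $C(i) \in R_{p_i}^*$, compute the quadratic form on the dual lattice explicitly (the paper uses coordinates $C(3) = se + uE_2 + vE_3 + wE_4$ and completes the square, then enumerates the ten integer solutions of $C(3)^2 = -\tfrac45$), and apply the positivity constraint $C(i)\cdot E_j = C\cdot E_j \ge 0$ to cut down to the two end-node coweights. Your Lie-theoretic phrasing (cosets of $A_n^*/A_n$, fundamental coweights) packages the same finite check.

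One slip to fix: you assert that $a \equiv 0 \pmod 5$ together with positivity forces $C(3) = 0$. This is false --- take $C(3) = -(E_1+E_2+E_3+E_4) = E_1^* + E_4^*$, which lies in $R_{p_3}$ and has $C(3)\cdot E_j \in \{0,1\}$. The repair is immediate: if $0 \ne C(3) \in R_{p_3}$ then $C(3)^2 \le -2$ since $R_{p_3}\cong A_4$ is an even negative-definite lattice, so the bound $C(3)^2 \le -\tfrac45$ holds and equality is impossible in this coset; but your argument as written needs this case handled separately rather than excluded. A smaller point: your stated reason for the integrality in (2) and (3) (``every nonzero coset has a minimal vector of the stated square'') establishes only the \emph{minimum}, not that every value of $C(i)^2$ is an integer multiple of $-\tfrac12$ or $-\tfrac23$. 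That follows instead from $C(i)\in R_{p_i}^*$ and the fact that the Gram matrix of $A_1^*$ (resp.\ $A_2^*$) in the coweight basis is $-\tfrac12(1)$ (resp.\ $-\tfrac13\begin{smallmatrix}2&1\\1&2\end{smallmatrix}$), which the paper verifies by expanding $C(2)^2 = -\tfrac23(s^2-3st+3t^2)$ directly.
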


\begin{proof} (1) The first equivalence follows from the negative definiteness of exceptional curves.
Note that $EC=EC(i)$ for any curve $E\subset f^{-1}(p_i)$.\\
The curve $C$ does not meet $f^{-1}(p_i)$ iff $EC=0$ for any curve
$E\subset f^{-1}(p_i)$ iff $EC(i)=0$ for any curve $E\subset
f^{-1}(p_i)$ iff $C(i)=0$.

(2) is trivial.

(3) Let $E_1, E_2$ be the exceptional curves generating $R_{p_2}$.
Take $$e:=-\frac{E_1+2E_2}{3}=E_2^*$$ as a generator of {\rm
disc}$(R_{p_2})$. Then $C(2)$ is of the form
$C(2)=ae+b_1E_1+b_2E_2$ for some integers $a, b_1, b_2$, hence of
the form $C(2)=se+tE_2$ for some integers $s,t$. We have
$$C(2)^2=-\frac{2}{3}(s^2-3st+3t^2).$$ It is easy to see that
$y:=s^2-3st+3t^2=(s-3t/2)^2+3t^2/4\ge 0$ for all $s, t$.

$C$ meets exactly one of the two components of $f^{-1}(p_2)$ with
multiplicity $1$ iff $(E_1C(2), E_2C(2))=(1,0)$ or $(0,1)$ iff
$C(2)=E_1^*=2e+E_2$ or $C(2)=E_2^*=e$ iff $(s,t)=(2,1)$ or
$(1,0)$. Both cases satisfy $C(2)^2=-2/3$. Conversely, if
$C(2)^2=-2/3$, then there are six solutions $(s,t)=\pm (1,0), \pm
(2,1), \pm (1,1)$ for the equation $y=(s-3t/2)^2+3t^2/4=1$. Since
$E_iC(2)=E_iC\ge 0$ for $i=1,2$, there remain only two solutions
$(s,t)=(1,0), (2,1)$.

(4) Let $E_1, E_2, E_3, E_4$ be the exceptional curves generating
$R_{p_3}$. Take
$$e:=-\frac{E_1+2E_2+3E_3+4E_4}{5}=E_4^*$$ as a generator of {\rm
disc}$(R_{p_3})$. Then $C(3)$ is of the form
$C(3)=ae+b_1E_1+b_2E_2+b_3E_3+b_4E_4$ for some integers $a, b_1,
b_2, b_3, b_4$, hence of the form $C(3)=se+uE_2+vE_3+wE_4$ for
some integers $s,u,v, w$. We have
$$\begin{array}{lll} C(3)^2 &=&
-\frac{4}{5}s^2-2u^2-2v^2-2w^2+2sw+2uv+2vw \\ &=&
-\frac{4}{5}\{(s-\frac{5w}{4})^2+\frac{5}{2}(u-\frac{v}{2})^2+\frac{15}{8}(v-\frac{2w}{3})^2+\frac{5}{48}w^2\}.
\end{array}$$
To prove the first assertion, assume that
$$(s-\frac{5w}{4})^2+\frac{5}{2}(u-\frac{v}{2})^2+\frac{15}{8}(v-\frac{2w}{3})^2+\frac{5}{48}w^2<1.$$
We need to show that $(s,u,v,w)=(0,0,0,0)$. The above inequality
implies that $w^2\le 9$, i.e., $w=0, \pm 1, \pm 2, \pm 3$. If
$w=0$, then there is only one solution $(s,u,v,w)=(0,0,0,0)$ to
the inequality. If $w=\pm 1, \pm 2, \pm 3$, no solution to the
inequality. This proves the first assertion.

$C$ meets exactly one of the two end components of $f^{-1}(p_3)$
with multiplicity $1$ iff $(E_1C, E_2C, E_3C, E_4C )=(1,0,0,0)$ or
$(0,0,0,1)$ iff $C(3)=E_1^*=4e+E_2+2E_3+3E_4$ or $C(3)=E_4^*=e$
iff $(s,u,v,w)=(4,1,2,3)$ or $(1,0,0,0)$. Both cases satisfy
$C(3)^2=-4/5$. Conversely, if $C(3)^2=-\frac{4}{5}$, then
$$(s-\frac{5w}{4})^2+\frac{5}{2}(u-\frac{v}{2})^2+\frac{15}{8}(v-\frac{2w}{3})^2+\frac{5}{48}w^2=1.$$
There are ten solutions to this equation,

$(s,u,v,w)=\pm (1,0,0,0)$, $\pm (4,1,2,3)$, $\pm (1,1,1,1)$, $\pm
(1,0,1,1)$, $\pm (1,0,0,1)$.\\ Since $E_iC(3)=E_iC\ge 0$ for
$i=1,2,3,4$, there remain only two solutions

$(s,u,v,w)=(4,1,2,3)$, $(1,0,0,0)$.
\end{proof}

\subsection{ Case 1: $<2,1>+<3,2>+<5,4>+ <b;2,1;3,1;5,1>$, $b\ge
2$}

In this case, the number of exceptional curves $\mu=11$, so
$K_{S'}^2=-2$. Let $E_1, \ldots, E_4$ be the components of
$f^{-1}(p_4)$ such that

\medskip
 $$
\begin{picture}(50,35)
\put(0,25){$\overset{-3}E_2-\overset{-b}E_4-\overset{-5}E_3$}
 \put(33,15){\line(0,0){6}}
 \put(28,5){$\underset{-2}{E_1}$}
\end{picture}
$$

\medskip\noindent
is their dual graph. We compute
\begin{equation}\label{ks'1}
K_{S'} = f^*K_S - \frac{(15b-16)E_1 + (20b-21)E_2  + (24b-25)E_3 +
(30b-32)E_4}{30b-31},
\end{equation}
$$K_S^2=\frac{30(b-1)^2}{30b-31},\quad |\det(R)|=30\cdot (30b-31),\quad
D=|\det(R)|K_S^2=30^2(b-1)^2.$$ We also compute the dual vectors,
\medskip

$E_1^*=-\frac{1}{30b-31}\{(15b-8)E_1+5E_2+3E_3+15E_4\}$,

$E_2^*=-\frac{1}{30b-31}\{5E_1+(10b-7)E_2+2E_3+10E_4\}$,

$E_3^*=-\frac{1}{30b-31}\{3E_1+2E_2+(6b-5)E_3+6E_4\}.$

\medskip\noindent
{\bf Claim 4.1.1.} Let $C$ be a $(-1)$-curve of the form
\eqref{C}. Suppose that $C$ meets $f^{-1}(p_4)$. Then it satisfies
one of the following three cases:
$$
\begin{array}{|c|c|c|c|c|c|c|c|c|c|}
\hline
 \textrm{Case} &   CE_4 & CE_3 & CE_2 & CE_1 &  k\\
\hline
({\rm a}) &  0 & 0 &  0 & 1 &  -15\\
\hline
({\rm b}) &  0 & 0   & 1 & 0&  -10\\
\hline
({\rm c}) & 0 & 1 & 0 & 0 &  -6\\
\hline
\end{array}
$$

\begin{proof}
We use the same argument as in the proof of Lemma \ref{spor1}.
First note that
$(f^*K_S)C=\frac{k}{\sqrt{D}}(f^*K_S)^2=\frac{(b-1)k}{30b-31}.$
Since $-K_S$ is ample and $C\notin R$, $(f^*K_S)C<0$, so $k<0$.
Intersecting $C$ with \eqref{ks'1} we get $$C\{(15b-16)E_1 +
(20b-21)E_2 + (24b-25)E_3 + (30b-32)E_4\}=(b-1)k+30b-31.$$ This is
possible only if $C$ satisfies one of the three cases (a), (b),
(c), or the case

\medskip\noindent $({\rm d})\,\,\, CE_4=1, CE_3=0, CE_2=0,
CE_1=0,\quad b=2, \quad k=-1.$

\medskip\noindent In the last case, we compute $C(4)=E_4^*=- \frac{1}{29}(15E_1 +
10E_2 + 6E_3 + 30E_4)$,\\so $C(4)^2=E_4^*C(4)=-\frac{30}{29}$ and
hence we get
$$\underset{j\le 3}{\sum} C(j)^2 =C^2-C(4)^2-(\frac{-1}{30}f^*K_S)^2=-1+\frac{30}{29}
-\frac{1}{30\cdot 29}>0,$$ contradicts  the negative definiteness
of exceptional curves.
\end{proof}

\medskip\noindent
{\bf Claim 4.1.2.} Let $C$ be a $(-1)$-curve of the form
\eqref{C}. Suppose that $C$ meets $f^{-1}(p_4)$. Then $C$ meets
only one component of $f^{-1}(p_1)\cup f^{-1}(p_2)\cup
f^{-1}(p_3)$, the intersection multiplicity is $1$, and the
component is
\begin{enumerate}
\item  the component of $f^{-1}(p_1)$, if $C$ satisfies {\rm (a)},
\item one of the two components of $f^{-1}(p_2)$, if $C$ satisfies {\rm (b)},
\item one of the two end components of $f^{-1}(p_3)$, if $C$ satisfies {\rm (c)}.
\end{enumerate}

\begin{proof}
Assume that $C$ satisfies {\rm (a)}. Then, $C(4)=E_1^*$,
$C(4)^2=E_1^*C(4)=-\frac{15b-8}{30b-31}$,

$C(1)^2 +C(2)^2
+C(3)^2=C^2-C(4)^2-(\frac{-15}{30(b-1)}f^*K_S)^2=-\frac{1}{2}.$\\
By Lemma \ref{lem4.1}, $C(2)=C(3)=0$, $C(1)^2= -\frac{1}{2}$, and
 $C$ does not meet $f^{-1}(p_2)\cup f^{-1}(p_3)$, but meets the
component of $f^{-1}(p_1)$ with multiplicity 1.

 Assume that $C$ satisfies {\rm (b)}. Then, $C(4)=E_2^*$, $C(4)^2=E_2^*C(4)=-\frac{10b-7}{30b-31}$,

$C(1)^2 +C(2)^2
+C(3)^2=C^2-C(4)^2-(\frac{-10}{30(b-1)}f^*K_S)^2=-\frac{2}{3}.$\\
By Lemma \ref{lem4.1}, $C(1)=C(3)=0$, $C(2)^2= -\frac{2}{3}$, and
$C$ does not meet $f^{-1}(p_1)\cup f^{-1}(p_3)$, but meets one of
the two components of $f^{-1}(p_2)$ with multiplicity 1.

Assume that $C$ satisfies {\rm (c)}. Then, $C(4)=E_3^*$,
$C(4)^2=E_3^*C(4)=-\frac{6b-5}{30b-31}$,

$C(1)^2 +C(2)^2
+C(3)^2=C^2-C(4)^2-(\frac{-6}{30(b-1)}f^*K_S)^2=-\frac{4}{5}.$\\
By Lemma \ref{lem4.1}, $C(1)=C(2)=0$, $C(3)^2= -\frac{4}{5}$, and
$C$ does not meet $f^{-1}(p_1)\cup f^{-1}(p_2)$, but meets one of
the end components of $f^{-1}(p_3)$ with multiplicity 1.
\end{proof}

\medskip\noindent
{\bf Claim 4.1.3.} There are three, mutually disjoint,
$(-1)$-curves $C_1, C_2, C_3$ satisfying {\rm (a)}, {\rm (b)},
{\rm (c)} from Claim 4.1.1, respectively.

\begin{proof}
By Lemma \ref{-ample}, $S'$ is a rational surface. Since
$K_{S'}^2<8$, $S'$ contains a $(-1)$-curve and can be blown down
to a minimal rational surface $F_n$ or $\mathbb{CP}^2$.

Assume that there is no $(-1)$-curve $C\subset S'$ meeting
$f^{-1}(p_4)$. Then, since $S'$ cannot contain a $(-l)$-curve with
$l\ge 2$ other than the exceptional curves of $f$ (Lemma
\ref{-k}), the configuration of $f^{-1}(p_4)$ remains the same
under the blow down process to $F_n$ or $\mathbb{CP}^2$. This is
impossible, as the configuration would define a negative definite
sublattice of rank 4 inside the Picard lattice of $F_n$ or
$\mathbb{CP}^2$.

Assume that there is only one $(-1)$-curve meeting $f^{-1}(p_4)$.
Then, the 3 components of $f^{-1}(p_4)$ untouched by the
$(-1)$-curve remain the same under the blow down process and
define a negative definite sublattice of rank 3 inside the Picard
lattice of $F_n$ or $\mathbb{CP}^2$. This is impossible.

If there are only two $(-1)$-curve meeting $f^{-1}(p_4)$. Then the
2 components of $f^{-1}(p_4)$ untouched by the two $(-1)$-curves
would remain the same under the blow down process and define a
negative definite sublattice of rank 2 inside the Picard lattice
of $F_n$ or $\mathbb{CP}^2$. Again, this is impossible.

For the mutual disjointness, we note that

$C_1= \frac{-15}{30(b-1)}f^*K_S+ C_1(1) +E_1^*$,

$C_2= \frac{-10}{30(b-1)}f^*K_S+ C_2(2) +E_2^*$,

$C_3= \frac{-6}{30(b-1)}f^*K_S+ C_3(3) +E_3^*$.\\
A direct calculation shows that $C_iC_j=0$ for $i\neq j$.
\end{proof}

\subsection{Case 2: $<2,1>+<3,2>+<5,2>+ <b;2,1;3,1;5,3>$, $b\ge
2$}

In this case, $\mu=10$, so $K_{S'}^2=-1$. Let $B_1, B_2$ be the
components of $f^{-1}(p_3)$, and $E_1, \ldots, E_5$ be the
components of $f^{-1}(p_4)$ such that

\medskip
 $$
\begin{picture}(40,30)
\put(-60,25){$\overset{-2}B_1-\overset{-3}B_2$}
\put(0,25){$\overset{-3}E_2-\overset{-b}E_5-\overset{-2}E_4-\overset{-3}E_3$}
 \put(33,15){\line(0,0){6}}
 \put(28,5){$\underset{-2}{E_1}$}
\end{picture}
$$

\medskip\noindent
is their dual graph. Then
\begin{equation}\label{ks'2}
    \begin{array}{lll}
 K_{S'} &= &f^*K_S - \frac{1}{5}{(B_1+2B_2)}
-\frac{1}{30b-43}\{(15b-22)E_1 + (20b-29)E_2\\
        & &  + (18b-26)E_3+(24b-35)E_4+(30b-44)E_5\},
    \end{array}
\end{equation}
$$K_S^2=\frac{6(5b-7)^2}{5(30b-43)},\quad |\det(R)|=30\cdot (30b-43),\quad
D=6^2(5b-7)^2.$$ We also compute the dual vectors,
\medskip

$B_1^*=-\frac{3B_1+B_2}{5}\quad \quad B_2^*=-\frac{B_1+2B_2}{5}$,

$E_1^*=-\frac{1}{30b-43}\{(15b-14)E_1+ 5E_2+3E_3+9E_4+15E_5\}$,

$E_2^*=-\frac{1}{30b-43}\{5E_1+(10b-11)E_2+2E_3+6E_4+10E_5\}$,

$E_3^*=-\frac{1}{30b-43}\{3E_1+2E_2+(12b-16)E_3+(6b-5)E_4+6E_5\}$.

\medskip\noindent
{\bf Claim 4.2.1.} Let $C$ be a $(-1)$-curve of the form
\eqref{C}. Suppose that $C$ meets $f^{-1}(p_4)$. Then it satisfies
one of the
  following three cases: $$
\begin{array}{|c|c|c|c|c|c|c|c|c|c|}
\hline
\textrm{ Case} &   CE_5 & CE_4 & CE_3 & CE_2 & CE_1 &  CB_2 & CB_1 & k\\
\hline
({\rm a}) & 0 & 0 & 0 &  0 & 1 & 0 &0 &  -15\\
\hline
({\rm b}) & 0 & 0 & 0   & 1 & 0& 0 & 0 &  -10\\
\hline
({\rm c}) & 0 & 0 & 1 & 0 & 0 & 0 &1 &   -6\\
\hline
\end{array}
$$

\begin{proof} First note that
$(f^*K_S)C=\frac{k}{\sqrt{D}}(f^*K_S)^2=\frac{(5b-7)k}{5(30b-43)}.$
Since $-K_S$ is ample and $C\notin R$, we see that $k<0$.
Intersecting $C$ with \eqref{ks'2} we get\\
$(30b-43)C(B_1+2B_2 ) + 5C\{(15b-22)E_1 + (20b-29)E_2 +
(18b-26)E_3 +
(24b-35)E_4+(30b-44)E_5\} =(5b-7)k+5(30b-43) <5(30b-43).$\\
This is possible only if $C$ satisfies one of the three cases or
the following case
$$({\rm d})\,\,\, CE_5=0, CE_4=1, CE_3=CE_2=CE_1=0, CB_1 = 1, CB_2 = 0,   b=2,   k=-1.$$
In case (d), $C(3)= B_1^*$ and $
C(4)=E_4^*=-\frac{1}{17}(9E_1+6E_2+7E_3+21E_4+18E_5)$, thus
$C(1)^2 +C(2)^2=C^2
-C(3)^2-C(4)^2-(\frac{-1}{18}f^*K_S)^2=-1+\frac{3}{5}+\frac{21}{17}
-\frac{1}{30\cdot 17}
>0,$    contradicts the negative definiteness of exceptional curves.
\end{proof}

\medskip\noindent
{\bf Claim 4.2.2.} Let $C$ be a $(-1)$-curve of the form
\eqref{C}. Suppose that $C$ meets $f^{-1}(p_4)$. Then $C$ meets
only one component of $f^{-1}(p_1)\cup f^{-1}(p_2)\cup
f^{-1}(p_3)$, the intersection multiplicity is $1$, and the
component is
\begin{enumerate}
\item  the component of $f^{-1}(p_1)$, if $C$ satisfies {\rm (a)},
\item one of the two components of $f^{-1}(p_2)$, if $C$ satisfies {\rm (b)},
\item the component $B_1$ of $f^{-1}(p_3)$, if $C$ satisfies {\rm (c)}.
\end{enumerate}

\begin{proof}
Assume that $C$ satisfies {\rm (a)}. Then, $C(3)=0$ and
$C(4)=E_1^*$, so

$C(1)^2
+C(2)^2=C^2-C(4)^2-(\frac{-15}{6(5b-7)}f^*K_S)^2=-\frac{1}{2}.$\\
By Lemma \ref{lem4.1}, $C(2)=0$ and $C(1)^2= -\frac{1}{2}$.

 Assume that $C$ satisfies {\rm (b)}. Then, $C(3)=0$ and
$C(4)=E_2^*$, so

$C(1)^2
+C(2)^2=C^2-C(4)^2-(\frac{-10}{6(5b-7)}f^*K_S)^2=-\frac{2}{3}.$\\
By Lemma \ref{lem4.1}, $C(1)=0$ and $C(2)^2=-\frac{2}{3}$.

Assume that $C$ satisfies {\rm (c)}. Then,
$C(3)=B_1^*=-\frac{3B_1+B_2}{5}$ and $C(4)=E_3^*$, so

$C(1)^2 +C(2)^2=C^2-C(3)^2 -
C(4)^2-(\frac{-6}{6(5b-7)}f^*K_S)^2=0.$\\ By the negative
definiteness, $C(1)= C(2)= 0$.
\end{proof}

\medskip\noindent
By the same proof as in the previous case, we see that there are
three, mutually disjoint, $(-1)$-curves $C_1, C_2, C_3$ satisfying
{\rm (a)}, {\rm (b)}, {\rm (c)} from Claim 4.2.1, respectively.

\subsection{Case 3: $<2,1>+<3,2>+<5,3>+ <b;2,1;3,1;5,2>$, $b\ge 2$}

In this case, $\mu=10$, so $K_{S'}^2=-1$. Let $B_1, B_2$ be the
components of $f^{-1}(p_3)$, and $E_1, \ldots, E_5$ be the
components of $f^{-1}(p_4)$ such that

\medskip
 $$
\begin{picture}(40,30)
\put(-60,20){$\overset{-2}B_1-\overset{-3}B_2$}
\put(0,20){$\overset{-3}E_2-\overset{-b}E_5-\overset{-3}E_4-\overset{-2}E_3$}
 \put(33,10){\line(0,0){6}}
 \put(28,0){$\underset{-2}{E_1}$}
\end{picture}
$$

\medskip\noindent
is their dual graph. Then
\begin{equation}\label{ks'3}
    \begin{array}{lll}
K_{S'} &=& f^*K_S - \frac{1}{5}{(B_1+2B_2)}
-\frac{1}{30b-37}\{(15b-19)E_1 + (20b-25)E_2\\
 & & + (12b-15)E_3 +(24b-30)E_4+(30b-38)E_5\},
    \end{array}
\end{equation}
$$K_S^2=\frac{6(5b-6)^2}{5(30b-37)},\quad |\det(R)|=30\cdot (30b-37),\quad
D=6^2(5b-6)^2.$$
We also compute the dual vectors,
\medskip

$B_1^*=-\frac{3B_1+B_2}{5}\quad \quad B_2^*=-\frac{B_1+2B_2}{5}$,

$E_1^*=-\frac{1}{30b-37}\{(15b-11)E_1+ 5E_2+3E_3+6E_4+15E_5\}$,

$E_2^*=-\frac{1}{30b-37}\{5E_1+(10b-9)E_2+2E_3+4E_4+10E_5\}$,

$E_3^*=-\frac{1}{30b-37}\{3E_1+2E_2+(18b-21)E_3+(6b-5)E_4+6E_5\}$.

\medskip\noindent
{\bf Claim 4.3.1.} Let $C$ be a $(-1)$-curve of the form
\eqref{C}. Suppose that $C$ meets $f^{-1}(p_4)$. Then it satisfies
one of the
  following three cases: $$
\begin{array}{|c|c|c|c|c|c|c|c|c|c|}
\hline
\textrm{Case} &   CE_5 & CE_4 & CE_3 & CE_2 & CE_1 &  CB_2 & CB_1 & k\\
\hline
({\rm a}) & 0 & 0 & 0 &  0 & 1 & 0 &0 &  -15\\
\hline
({\rm b}) & 0 & 0 & 0   & 1 & 0& 0 & 0 &  -10\\
\hline
({\rm c}) & 0 & 0 & 1 & 0 & 0 & 1 & 0 &   -6\\
\hline
\end{array}
$$

\begin{proof}
Since $(f^*K_S)C=\frac{(5b-6)k}{5(30b-37)}<0$, $k<0$.
Intersecting $C$ with \eqref{ks'3} we get\\
$(30b-37)C(B_1+2B_2 ) + 5C\{(15b-19)E_1 + (20b-25)E_2 +
(12b-15)E_3 +
(24b-30)E_4+(30b-38)E_5\}=(5b-6)k+5(30b-37) <5(30b-37).$\\
This is possible only if $C$ satisfies one of the three cases or
the following case
$$({\rm d})\,\,\, CE_5=0, CE_4=0, CE_3=1, CE_2=0, CE_1=0,  CB_1 = 2, CB_2 = 0, \quad
k=-6.$$ In the last case, $C(3)=2B_1^*$ and $C(4)=E_3^*$, so
$C(3)^2 = -\frac{12}{5}$ and $C(4)^2 = -\frac{18b-21}{30b-37}$,
hence $C(1)^2 +C(2)^2=C^2
-C(3)^2-C(4)^2-(\frac{-6}{6(5b-6)}f^*K_S)^2>0,$ which contradicts
the negative definiteness of exceptional curves.
\end{proof}

\medskip\noindent
{\bf Claim 4.3.2.} Let $C$ be a $(-1)$-curve of the form
\eqref{C}. Suppose that $C$ meets $f^{-1}(p_4)$. Then $C$ meets
only one component of $f^{-1}(p_1)\cup f^{-1}(p_2)\cup
f^{-1}(p_3)$, the intersection multiplicity is $1$, and the
component is
\begin{enumerate}
\item  the component of $f^{-1}(p_1)$, if $C$ satisfies {\rm (a)},
\item one of the two components of  $f^{-1}(p_2)$, if $C$ satisfies {\rm (b)},
\item the component $B_2$ of $f^{-1}(p_3)$, if $C$ satisfies {\rm (c)}.
\end{enumerate}

\begin{proof}
Assume that $C$ satisfies {\rm (a)}. Then, $C(3)=0$ and
$C(4)=E_1^*$, so  $C(1)^2
+C(2)^2=-1+\frac{15b-11}{30b-37}-(\frac{-15}{6(5b-6)}f^*K_S)^2=-\frac{1}{2}.$
By Lemma \ref{lem4.1}, $C(2)=0$ and $C(1)^2= -\frac{1}{2}$.

 Assume that $C$ satisfies {\rm (b)}. Then, $C(3)=0$ and $C(4)=E_2^*,$ so  $C(1)^2
+C(2)^2=-1+\frac{10b-9}{30b-37}-(\frac{-10}{6(5b-6)}f^*K_S)^2=-\frac{2}{3}.$
By Lemma \ref{lem4.1}, $C(1)=0$ and $C(2)^2=-\frac{2}{3}$.

Assume that $C$ satisfies {\rm (c)}. Then, $C(3)=B_2^*$ and
$C(4)=E_3^*$, so  $C(1)^2
+C(2)^2=-1+\frac{2}{5}+\frac{18b-21}{30b-37}
-(\frac{-6}{6(5b-6)}f^*K_S)^2=0.$ By the negative definiteness,
$C(1)= C(2)= 0$.
\end{proof}

\medskip\noindent
The same proof as in the previous cases shows that there are
three, mutually disjoint, $(-1)$-curves $C_1, C_2, C_3$ satisfying
{\rm (a)}, {\rm (b)}, {\rm (c)} from Claim 4.3.1, respectively.

\subsection{Case 4: $<2,1>+<3,2>+<5,1>+ <b;2,1;3,1;5,4>$, $b\ge 2$}

In this case, $\mu=11$, so $K_{S'}^2=-2$. Let $B$ be the component
of $f^{-1}(p_3)$, and $E_1, \ldots, E_7$ be the components of
$f^{-1}(p_4)$ such that

\medskip
 $$
\begin{picture}(120,30)
\put(0,25){$\overset{-3}E_2-\overset{-b}E_7-\overset{-2}E_6-\overset{-2}E_5-\overset{-2}E_4-\overset{-2}E_3$}
 \put(33,15){\line(0,0){6}}
 \put(28,5){$\underset{-2}{E_1}$}
\end{picture}
$$

\medskip\noindent
is their dual graph. Then
\begin{equation}\label{ks'4}
    \begin{array}{lll}
K_{S'} &=& f^*K_S - \frac{3}{5}B -\frac{1}{30b-49}\{(15b-25)E_1 +
(20b-33)E_2 + (6b-10)E_3\\
 &&
+(12b-20)E_4+(18b-30)E_5+(24b-40)E_6+(30b-50)E_7\},
    \end{array}
\end{equation}
$$K_S^2=\frac{6(5b-8)^2}{5(30b-49)},\quad |\det(R)|=30\cdot (30b-49),\quad
D=6^2(5b-8)^2.$$ We also compute the dual vectors,
\medskip

$E_1^*=-\frac{1}{30b-49}\{(15b-17)E_1+5E_2+3E_3+6E_4+9E_5+12E_6+15E_7\},$

$E_2^*=-\frac{1}{30b-49}\{5E_1+(10b-13)E_2+2E_3+4E_4+6E_5+8E_6+10E_7\},$

$E_3^*=-\frac{1}{30b-49}\{3E_1+2E_2+(24b-38)E_3+(18b-27)E_4+(12b-16)E_5+(6b-5)E_6+6E_7\}.$

\medskip\noindent
{\bf Claim 4.4.1.} Let $C$ be a $(-1)$-curve of the form
\eqref{C}. Suppose that $C$ meets $f^{-1}(p_4)$. Then it satisfies
one of the
  following three cases: $$
\begin{array}{|c|c|c|c|c|c|c|c|c|c|c|}
\hline
\textrm{Case} & CE_7 & CE_6 & CE_5 & CE_4 & CE_3 & CE_2 & CE_1 & CB  & k\\
\hline
({\rm a}) &  0 & 0 & 0 & 0 &  0 & 0 & 1 &0 & -15\\
\hline
({\rm b}) &  0 & 0 & 0 & 0   & 0 & 1& 0 &0 & -10\\
\hline
({\rm c}) &  0 & 0 & 0 & 0 & 1 & 0 &  0 &1 & -6\\
\hline
\end{array}
$$

\begin{proof}
Since $(f^*K_S)C=\frac{(5b-8)k}{5(30b-49)}<0$,  $k<0$.
Intersecting $C$ with \eqref{ks'4} we get\\
$3(30b-49)CB + 5C\{(15b-25)E_1 + (20b-33)E_2 + (6b-10)E_3 +
(12b-20)E_4+(18b-30)E_5+(24b-40)E_6+(30b-50)E_7\} =(5b-8)k+5(30b-49) <5(30b-49).$\\
This is possible only if $C$ satisfies one of the three cases, or
one of the two cases:
 $$
\begin{array}{|c|c|c|c|c|c|c|c|c|c|c|c|}
\hline
 \textrm{Case} & CE_7 & CE_6 & CE_5 & CE_4 & CE_3 & CE_2 & CE_1 & CB  & k & b\\
\hline
({\rm d}) &  0 & 0 & 0 & 0 &  2 & 0 & 0 & 1 & -1 & 2\\
\hline
({\rm e}) &  0 & 0 & 0 & 1 & 0 & 0& 0 & 1 & -1 & 2\\
\hline
\end{array}
$$
In Case (d), $C(3)=B^*=-\frac{1}{5}B$ and $C(4)=2E_3^*$,
 thus

$C(1)^2 +C(2)^2=C^2
-C(3)^2-C(4)^2-(\frac{-1}{12}f^*K_S)^2=-1+\frac{1}{5}+\frac{40}{11}
-\frac{1}{30\cdot 11}
>0.$\\
In Case (e), $C(3)=-\frac{1}{5}B$ and
$C(4)=E_4^*=-\frac{1}{11}(6E_1+4E_2+9E_3+18E_4+16E_5+14E_6+12E_7)$,
 thus

$C(1)^2 +C(2)^2=C^2
-C(3)^2-C(4)^2-(\frac{-1}{12}f^*K_S)^2=-1+\frac{1}{5}+\frac{18}{11}
-\frac{1}{30\cdot 11}
>0.$\\   Both contradict the negative
definiteness of exceptional curves.
\end{proof}

\medskip\noindent
{\bf Claim 4.4.2.} Let $C$ be a $(-1)$-curve of the form
\eqref{C}. Suppose that $C$ meets $f^{-1}(p_4)$. Then $C$ meets
only one component of $f^{-1}(p_1)\cup f^{-1}(p_2)\cup
f^{-1}(p_3)$, the intersection multiplicity with the component is
$1$, and the component is
\begin{enumerate}
\item  the component of $f^{-1}(p_1)$, if $C$ satisfies {\rm (a)},
\item one of the two components of $f^{-1}(p_2)$, if $C$ satisfies {\rm (b)},
\item the component $B$ of $f^{-1}(p_3)$, if $C$ satisfies {\rm (c)}.
\end{enumerate}

\begin{proof}
Assume that $C$ satisfies {\rm (a)}. Then, $C(3)=0$ and
$C(4)=E_1^*$, so $C(4)^2=-\frac{15b-17}{30b-49}$ and $C(1)^2
+C(2)^2=C^2-C(4)^2-(\frac{-15}{6(5b-8)}f^*K_S)^2=-\frac{1}{2}.$ By
Lemma \ref{lem4.1}, $C(2)=0$ and $C(1)^2= -\frac{1}{2}$.

 Assume that $C$ satisfies {\rm (b)}. Then, $C(3)=0$ and
$C(4)=E_2^*$, so $C(1)^2
+C(2)^2=-1+\frac{10b-13}{30b-49}-(\frac{-10}{6(5b-8)}f^*K_S)^2=-\frac{2}{3}.$
By Lemma \ref{lem4.1}, $C(1)=0$ and $C(2)^2=-\frac{2}{3}$.

Assume that $C$ satisfies {\rm (c)}. Then, $C(3)=-\frac{1}{5}B$
and $C(4)=E_3^*$, so  $C(1)^2
+C(2)^2=-1+\frac{1}{5}+\frac{24b-38}{30b-49}-(\frac{-6}{6(5b-8)}f^*K_S)^2=0.$
By the negative definiteness, $C(1)= C(2)= 0$.
\end{proof}

\medskip\noindent
Similarly, we see that there are three, mutually disjoint,
$(-1)$-curves $C_1, C_2, C_3$ satisfying {\rm (a)}, {\rm (b)},
{\rm (c)} from Claim 4.4.1, respectively.

\subsection{Case 5: $<2,1>+<3,1>+<5,4>+ <b;2,1;3,2;5,1>$, $b\ge 2$}

In this case, $\mu=11$, so $K_{S'}^2=-2$. Let $B$ be the component
of $f^{-1}(p_2)$, and $E_1, \ldots, E_5$ be the components of
$f^{-1}(p_4)$ such that

\medskip
 $$
\begin{picture}(120,30)
\put(0,25){$\overset{-2}E_2-\overset{-2}E_3-\overset{-b}E_5-\overset{-5}E_4$}
 \put(60,15){\line(0,0){6}}
 \put(55,5){$\underset{-2}{E_1}$}
\end{picture}
$$

\medskip\noindent
is their dual graph. Then
\begin{equation}\label{ks'5}
    \begin{array}{lll}
K_{S'} &=& f^*K_S - \frac{1}{3}B -\frac{1}{30b-41}\{(15b-21)E_1 +
(10b-14)E_2\\
& &  + (20b-28)E_3+ (24b-33)E_4 +(30b-42)E_5\},
    \end{array}
\end{equation}
$$K_S^2=\frac{10(3b-4)^2}{3(30b-41)},\quad |\det(R)|=30\cdot (30b-41),\quad
D=10^2(3b-4)^2.$$ We also compute the dual vectors,
\medskip

$E_1^*=-\frac{1}{30b-41}\{(15b-13)E_1+ 5E_2+10E_3+3E_4+15E_5\},$

$E_2^*=-\frac{1}{30b-41}\{5E_1+(20b-24)E_2+(10b-7)E_3+2E_4+10E_5\},$

$E_4^*=-\frac{1}{30b-41}\{3E_1+2E_2+4E_3+(6b-7)E_4+6E_5\}$.

\medskip\noindent
{\bf Claim 4.5.1.} Let $C$ be a $(-1)$-curve of the form
\eqref{C}. Suppose that $C$ meets $f^{-1}(p_4)$. Then it satisfies
one of the
  following three cases: $$
\begin{array}{|c| c|c|c|c|c|c|c|c|}
\hline
\textrm{Case} & CE_5 & CE_4 & CE_3 & CE_2 & CE_1 & CB  & k\\
\hline
({\rm a}) &   0 & 0 & 0 &  0 & 1 & 0 &  -15\\
\hline
({\rm b}) &   0 & 0 & 0   & 1 & 0& 1 &   -10\\
\hline
({\rm c}) &  0 & 1 & 0 & 0 & 0 & 0 &   -6\\
\hline
\end{array}
$$

\begin{proof}
Since $(f^*K_S)C=\frac{(3b-4)k}{3(30b-41)}<0$, $k<0$.
Intersecting $C$ with \eqref{ks'5} we get\\
$(30b-41)CB + 3C\{(15b-21)E_1 +(10b-14)E_2 + (20b-28)E_3 +
(24b-33)E_4 + (30b-42)E_5\}=(3b-4)k+3(30b-41) <3(30b-41).$\\ This
is possible only if $C$ satisfies one of the three cases, or one
of the following three cases: $$
\begin{array}{|c|c|c|c|c|c|c|c|c|c| c|}
\hline
\textrm{Case} & CE_6 & CE_5 & CE_4 & CE_3 & CE_2 & CE_1 & CB  & k & b\\
\hline
({\rm d}) &  0 & 0 & 0 & 1 &  0 & 0 & 1 & -1  & 2\\
\hline
({\rm e}) &  0 & 0 &     0 & 0& 2 & 0 & 1 & -1  & 2\\
\hline
({\rm f}) &  0 & 0 &   0 & 0 & 1 & 1 & 0 & -6 & 2\\
\hline
\end{array}
$$
 In Case (d),
$C(2)=-\frac{1}{3}B$ and
$C(4)=E_3^*=-\frac{1}{19}(10E_1+13E_2+26E_3+4E_4+20E_5)$, thus
$C(1)^2
+C(3)^2=C^2-C(2)^2-C(4)^2-(\frac{-1}{20}f^*K_S)^2=-1+\frac{1}{3}+\frac{26}{19}
-\frac{1}{30\cdot 19}
>0.$\\ In Case (e), $C(2)=-\frac{1}{3}B$ and
$C(4)=2E_2^*$, thus\\ $C(1)^2 +C(3)^2=-1+\frac{1}{3}+\frac{64}{19}
-\frac{1}{30\cdot 19}>0.$\\ In Case (f), $C(2)=0$ and
$C(4)=E_1^*+E_2^*$, thus $C(1)^2 +C(3)^2=-1+\frac{43}{19}
-\frac{36}{30\cdot 19}>0.$ All these cases lead to a
contradiction.
\end{proof}

\medskip\noindent
{\bf Claim 4.5.2.} Let $C$ be a $(-1)$-curve of the form
\eqref{C}. Suppose that $C$ meets $f^{-1}(p_4)$. Then $C$ meets
only one component of $f^{-1}(p_1)\cup f^{-1}(p_2)\cup
f^{-1}(p_3)$, the intersection multiplicity with the component is
$1$, and the component is
\begin{enumerate}
\item  the component of $f^{-1}(p_1)$, if $C$ satisfies {\rm (a)},
\item the component of $B$ of $f^{-1}(p_2)$, if $C$ satisfies {\rm (b)},
\item one of the two end components of $f^{-1}(p_3)$, if $C$ satisfies {\rm (c)}.
\end{enumerate}

\begin{proof}
Assume that $C$ satisfies {\rm (a)}. Then, $C(2)=0$ and
$C(4)=E_1^*$, so $C(4)^2=-\frac{15b-13}{30b-41}$, hence $C(1)^2
+C(3)^2=C^2-C(4)^2-(\frac{-15}{10(3b-4)}f^*K_S)^2=-\frac{1}{2}.$
By Lemma \ref{lem4.1}, $C(3)=0$ and $C(1)^2= -\frac{1}{2}$.

 Assume that $C$ satisfies {\rm (b)}. Then, $C(2)=-\frac{1}{3}B$ and
$C(4)=E_2^*$, so  $C(1)^2
+C(3)^2=-1+\frac{1}{3}+\frac{20b-24}{30b-41}-(\frac{-10}{10(3b-4)}f^*K_S)^2=0.$
By the negative definiteness, $C(1)=C(3)=0$.

Assume that $C$ satisfies {\rm (c)}. Then, $C(2)=0$ and
$C(4)=E_4^*$, so  $C(1)^2
+C(3)^2=-1+\frac{6b-7}{30b-41}-(\frac{-6}{10(3b-4)}f^*K_S)^2=-\frac{4}{5}.$
By Lemma \ref{lem4.1}, $C(1)=0$ and $C(3)^2= -\frac{4}{5}$.
\end{proof}

\medskip\noindent
Similarly, we see that there are three, mutually disjoint,
$(-1)$-curves $C_1, C_2, C_3$ satisfying {\rm (a)}, {\rm (b)},
{\rm (c)} from Claim 4.5.1, respectively. In this case, $C_1=
\frac{-15}{10(3b-4)}f^*K_S+ C_1(1) +E_1^*$, $C_2=
\frac{-10}{10(3b-4)}f^*K_S+ C_2(2) +E_2^*$, $C_3=
\frac{-6}{10(3b-4)}f^*K_S+ C_3(3) +E_4^*$.

\subsection{Case 6: $<2,1>+<3,1>+<5,2>+ <b;2,1;3,2;5,3>$, $b\ge 2$}

In this case, $\mu=10$, so $K_{S'}^2=-1$. Let $B$ be the component
of $f^{-1}(p_2)$, $B_2,B_3$ be the components of $f^{-1}(p_3)$,
and $E_1, \ldots, E_6$ be the components of $f^{-1}(p_4)$ such
that

\medskip
 $$
\begin{picture}(40,30)
\put(-60,25){$\overset{-2}B_2-\overset{-3}B_3$}
\put(0,25){$\overset{-2}E_2-\overset{-2}E_3-\overset{-b}E_6-\overset{-2}E_5-\overset{-3}E_4$}
 \put(60,15){\line(0,0){6}}
 \put(55,5){$\underset{-2}{E_1}$}
\end{picture}
$$

\medskip\noindent
is their dual graph. Then
\begin{equation}\label{ks'6}
    \begin{array}{lll}
K_{S'} &=& f^*K_S - \frac{1}{3}B - \frac{1}{5}{(B_2+2B_3)}
  -\frac{1}{30b-53}\{(15b-27)E_1 + (10b-18)E_2\\
&& + (20b-36)E_3 + (18b-32)E_4+(24b-43)E_5+(30b-54)E_6\},
    \end{array}
\end{equation}
$$K_S^2=\frac{2(15b-26)^2}{15(30b-53)},\quad |\det(R)|=30\cdot (30b-53),\quad
D=2^2(15b-26)^2.$$ We also compute the dual vectors,
\medskip

$B_2^*=-\frac{3B_2+B_3}{5}\quad \quad B_3^*=-\frac{B_2+2B_3}{5}$,

$E_1^*=-\frac{1}{30b-53}\{(15b-19)E_1+5E_2+10E_3+3E_4+9E_5+15E_6\},$

$E_2^*=-\frac{1}{30b-53}\{5E_1+(20b-32)E_2+(10b-11)E_3+2E_4+6E_5+10E_6\},$

$E_4^*=-\frac{1}{30b-53}\{3E_1+2E_2+4E_3+(12b-20)E_4+(6b-7)E_5+6E_6\}$.

\medskip\noindent
{\bf Claim 4.6.1.} Let $C$ be a $(-1)$-curve of the form
\eqref{C}. Suppose that $C$ meets $f^{-1}(p_4)$. Then it satisfies
one of the
  following three cases: $$
\begin{array}{|c|c|c|c|c|c|c|c|c|c|c|c|}
\hline
\textrm{Case} & CE_6 & CE_5 & CE_4 & CE_3 & CE_2 & CE_1 & CB_3 & CB_2 & CB & k\\
\hline
({\rm a}) &  0 & 0 & 0 & 0 &  0 & 1 & 0 &0 & 0 & -15\\
\hline
({\rm b}) &  0 & 0 & 0 & 0   & 1 & 0& 0 & 0 & 1 & -10\\
\hline
({\rm c}) &  0 & 0 & 1 & 0 & 0 & 0 & 0 & 1 & 0 & -6\\
\hline
\end{array}
$$

\begin{proof}
Since $(f^*K_S)C=\frac{(15b-26)k}{15(30b-53)}<0$, $k<0$.
Intersecting $C$ with \eqref{ks'6} we get\\
$(30b-53)C(5B+3B_2+6B_3 ) + 15C\{(15b-27)E_1 + (10b-18)E_2 +
(20b-36)E_3\\ + (18b-32)E_4+(24b-43)E_5+(30b-54)E_6\}=(15b-26)k+15(30b-53).$\\
This is possible only if $C$ satisfies one of the three cases, or
one of the following five cases:
$$
\begin{array}{|c|c|c|c|c|c|c|c|c|c|c|c|c|}
\hline
\textrm{Case} & CE_6 & CE_5 & CE_4 & CE_3 & CE_2 & CE_1 & CB_3 & CB_2 & CB & k & b\\
\hline
({\rm d}) &  0 & 0 & 0 & 0 &  1 & 0 & 0 &3 & 0 & -3 & 2\\
\hline
({\rm e}) &  0 & 0 & 0 & 0   & 1 & 0& 1 & 1 & 0 & -3 & 2\\
\hline
({\rm f}) &  0 & 0 & 0 & 0 & 0 & 1 & 0 & 1 & 1 & -1 & 2\\
\hline
({\rm g}) &  0 & 0 & 0 & 0 & 2 & 0 & 0 & 1 & 0 & -6 & 2\\
\hline
({\rm h}) &  0 & 0 & 0 & 1 & 0 & 0 & 0 & 1 & 0 & -6 & 2\\
\hline
\end{array}
$$
In Case (d), $C(2)=0, C(3) =3B_2^*$ and
$C(4)=E_2^*$, thus\\
$C(1)^2 = C^2
-C(3)^2-C(4)^2-(\frac{-3}{8}f^*K_S)^2=-1+\frac{27}{5}+\frac{8}{7}
-\frac{9}{30\cdot 7} >0.$\\ In Case (e), $C(2)=0, C(3)=B_2^*+B_3^*
=-\frac{4B_2+3B_3}{5}$ and
$C(4)=E_2^*$, thus\\
$C(1)^2 = C^2
-C(3)^2-C(4)^2-(\frac{-3}{8}f^*K_S)^2=-1+\frac{7}{5}+\frac{8}{7}
-\frac{9}{30\cdot 7} >0.$\\ In Case (f), $C(2)=-\frac{1}{3}B,
C(3)=B_2^*$ and $C(4)=E_1^*$,
thus\\
$C(1)^2 = C^2
-C(2)^2-C(3)^2-C(4)^2-(\frac{-1}{8}f^*K_S)^2=-1+\frac{1}{3}+\frac{3}{5}+\frac{11}{7}
-\frac{1}{30\cdot 7} >0.$\\ In Case (g), $C(2)=0, C(3)=B_2^*$ and
$C(4)=2E_2^*$,
thus\\
$C(1)^2 = C^2
-C(3)^2-C(4)^2-(\frac{-6}{8}f^*K_S)^2=-1+\frac{3}{5}+\frac{32}{7}
-\frac{36}{30\cdot 7} >0.$\\ In Case (h), $C(2)=0, C(3)=B_2^*$ and
$C(4)=E_3^*=-\frac{1}{7}(10E_1+9E_2+18E_3+4E_4+12E_5+20E_6)$,
thus\\
$C(1)^2 = C^2
-C(3)^2-C(4)^2-(\frac{-6}{8}f^*K_S)^2=-1+\frac{3}{5}+\frac{18}{7}
-\frac{36}{30\cdot 7} >0.$\\
 All contradict the negative
definiteness of exceptional curves.
\end{proof}

\medskip\noindent
{\bf Claim 4.6.2.} Let $C$ be a $(-1)$-curve of the form
\eqref{C}. Suppose that $C$ meets $f^{-1}(p_4)$. Then $C$ meets
only one component of $f^{-1}(p_1)\cup f^{-1}(p_2)\cup
f^{-1}(p_3)$, the intersection multiplicity with the component is
$1$, and the component is
\begin{enumerate}
\item  the component of $f^{-1}(p_1)$, if $C$ satisfies {\rm (a)},
\item the component $B$ of $f^{-1}(p_2)$, if $C$ satisfies {\rm (b)},
\item the component $B_2$ of $f^{-1}(p_3)$, if $C$ satisfies {\rm (c)}.
\end{enumerate}

\begin{proof}
Assume that $C$ satisfies {\rm (a)}. Then, $C(2)=C(3)=0$ and
$C(4)=E_1^*$, so $C(4)^2=-\frac{15b-19}{30b-53}$, hence $C(1)^2
=C^2-C(4)^2-(\frac{-15}{2(15b-26)}f^*K_S)^2=-\frac{1}{2}.$

Assume that $C$ satisfies {\rm (b)}. Then, $C(2)=-\frac{1}{3}B$,
$C(3) = 0$ and $C(4)=E_2^*,$ so  $C(1)^2
=-1+\frac{1}{3}+\frac{20b-32}{30b-53}-(\frac{-10}{2(15b-26)}f^*K_S)^2=0.$
Hence $C(1)=0$.

Assume that $C$ satisfies {\rm (c)}. In this case, $C(2)=0$,
$C(3)=B_2^*$ and $C(4)=E_4^*,$ so  $C(1)^2
=-1+\frac{3}{5}+\frac{12b-20}{30b-53}-(\frac{-6}{2(15b-26)}f^*K_S)^2=0.$
Hence $C(1)=0$.
\end{proof}

\medskip\noindent
Similarly, we see that there are three, mutually disjoint,
$(-1)$-curves $C_1, C_2, C_3$ satisfying {\rm (a)}, {\rm (b)},
{\rm (c)} from Claim 4.6.1, respectively.

\subsection{Case 7: $<2,1>+<3,1>+<5,3>+ <b;2,1;3,2;5,2>$, $b\ge 2$}
In this case, $\mu=10$, so $K_{S'}^2=-1$. Let $B$ be the component
of $f^{-1}(p_2)$, $B_2,B_3$ be the components of $f^{-1}(p_3)$,
and $E_1, \ldots, E_6$ be the components of $f^{-1}(p_4)$ such
that

\medskip
 $$
\begin{picture}(40,30)
\put(-60,20){$\overset{-2}B_2-\overset{-3}B_3$}
\put(0,20){$\overset{-2}E_2-\overset{-2}E_3-\overset{-b}E_6-\overset{-3}E_5-\overset{-2}E_4$}
 \put(60,10){\line(0,0){6}}
 \put(55,0){$\underset{-2}{E_1}$}
\end{picture}
$$

\medskip\noindent
is their dual graph. Then
\begin{equation}\label{ks'7}
    \begin{array}{lll}
K_{S'} &=& f^*K_S - \frac{1}{3}B - \frac{1}{5}{(B_2+2B_3)}-\frac{1}{30b-47}\{(15b-24)E_1 + (10b-16)E_2 \\
&& + (20b-32)E_3+ (12b-19)E_4+(24b-38)E_5+(30b-48)E_6\},
    \end{array}
\end{equation}
$$K_S^2=\frac{2(15b-23)^2}{15(30b-47)},\quad |\det(R)|=30\cdot (30b-47),\quad
D=2^2(15b-23)^2.$$ We also compute the dual vectors,
\medskip

$B_2^*=-\frac{3B_2+B_3}{5}\quad \quad B_3^*=-\frac{B_2+2B_3}{5}$,

$E_1^*=-\frac{1}{30b-47}\{(15b-16)E_1+5E_2+10E_3+3E_4+6E_5+15E_6\},$

$E_2^*=-\frac{1}{30b-47}\{5E_1+(20b-28)E_2+(10b-9)E_3+2E_4+4E_5+10E_6\},$

$E_4^*=-\frac{1}{30b-47}\{3E_1+2E_2+4E_3+(18b-27)E_4+(6b-7)E_5+6E_6\}$.

\medskip\noindent
{\bf Claim 4.7.1.} Let $C$ be a $(-1)$-curve of the form
\eqref{C}. Suppose that $C$ meets $f^{-1}(p_4)$. Then it satisfies
one of the
  following three cases: $$
\begin{array}{|c|c|c|c|c|c|c|c|c|c|c|c|}
\hline
\textrm{Case} & CE_6 & CE_5 & CE_4 & CE_3 & CE_2 & CE_1 & CB_3 & CB_2 & CB & k\\
\hline
({\rm a}) &  0 & 0 & 0 & 0 &  0 & 1 & 0 &0 & 0 & -15\\
\hline
({\rm b}) &  0 & 0 & 0 & 0   & 1 & 0& 0 & 0 & 1 & -10\\
\hline
({\rm c}) &  0 & 0 & 1 & 0 & 0 & 0 & 1 & 0 & 0 & -6\\
\hline
\end{array}
$$

\begin{proof}
Since $(f^*K_S)C=\frac{(15b-23)k}{15(30b-47)}<0$, $k<0$.
Intersecting $C$ with \eqref{ks'7} we get\\
$(30b-47)C(5B+3B_2+6B_3 ) + 15C\{(15b-24)E_1 + (10b-16)E_2 +
(20b-32)E_3+ (12b-19)E_4+(24b-38)E_5+(30b-48)E_6\}=(15b-23)k+15(30b-47).$\\
This is possible only if $C$ satisfies one of the three cases, or the case \\
$({\rm d})\,\,\,   CE_6 = CE_5=0, CE_4=1, CE_3=0, CE_2=1, CE_1=0,
CB_3 = 0$, $CB_2=1$, $CB = 0,$
  $\quad b=2, \quad k=-3. $\\
 In the last case, $C(2)
= 0, C(3) = B_2^*$ and $C(4) = E_2^* + E_4^*$, thus

$C(1)^2
=C^2-C(3)^2-C(4)^2-(\frac{-3}{14}f^*K_S)^2=-1+\frac{3}{5}+\frac{25}{13}
-\frac{9}{30\cdot 13} >0,$\\ which contradicts the negative
definiteness of exceptional curves.
\end{proof}

\medskip\noindent
{\bf Claim 4.7.2.} Let $C$ be a $(-1)$-curve of the form
\eqref{C}. Suppose that $C$ meets $f^{-1}(p_4)$.
 Then $C$ meets only one component of $f^{-1}(p_1)\cup
f^{-1}(p_2)\cup f^{-1}(p_3)$, the intersection multiplicity with the
component is $1$, and the component is
\begin{enumerate}
\item  the component of $f^{-1}(p_1)$, if $C$ satisfies {\rm (a)},
\item the component $B$ of $f^{-1}(p_2)$, if $C$ satisfies {\rm (b)},
\item the component $B_3$ of $f^{-1}(p_3)$, if $C$ satisfies {\rm (c)}.
\end{enumerate}

\begin{proof}
Assume that $C$ satisfies {\rm (a)}. Then, $C(2)=C(3)=0$ and
$C(4)=E_1^*$, so $C(4)^2=-\frac{15b-16}{30b-53}$, hence
$C(1)^2=C^2-C(4)^2-(\frac{-15}{2(15b-23)}f^*K_S)^2=-\frac{1}{2}.$

 Assume that $C$ satisfies {\rm (b)}. Then, $C(2)=-\frac{1}{3}B, C(3) = 0$ and
$C(4)=E_2^*$, so
$C(1)^2=-1+\frac{1}{3}+\frac{20b-28}{30b-47}-(\frac{-10}{2(15b-23)}f^*K_S)^2=0.$
Hence $C(1)=0$.

Assume that $C$ satisfies {\rm (c)}. In this case, $C(2)=0$,
$C(3)=B_3^*$ and $C(4)=E_4^*$, so
$C(1)^2=-1+\frac{2}{5}+\frac{18b-27}{30b-47}
-(\frac{-6}{2(15b-23)}f^*K_S)^2=0.$ Hence $C(1)=0$.
\end{proof}

\medskip\noindent
Similarly, we see that there are three, mutually disjoint,
$(-1)$-curves $C_1, C_2, C_3$ satisfying {\rm (a)}, {\rm (b)},
{\rm (c)} from Claim 4.7.1, respectively.

\subsection{Case 8: $<2,1>+<3,1>+<5,1>+ <b;2,1;3,2;5,4>$, $b\ge 2$}

In this case, $\mu=11$, so $K_{S'}^2=-2$. Let $B, B_2$ be the
components of $f^{-1}(p_2), f^{-1}(p_3)$, and $E_1, \ldots, E_8$
be the components of $f^{-1}(p_4)$ such that

\medskip
 $$
\begin{picture}(120,30)
\put(0,25){$\overset{-2}E_2-\overset{-2}E_3-\overset{-b}E_8-\overset{-2}E_7-\overset{-2}E_6-\overset{-2}E_5-\overset{-2}E_4$}
 \put(60,15){\line(0,0){6}}
 \put(55,5){$\underset{-2}{E_1}$}
\end{picture}
$$

\medskip\noindent
is their dual graph. Then
\begin{equation}\label{ks'8}
\begin{array}{lll}
K_{S'} &=& f^*K_S - \frac{1}{3}B - \frac{3}{5}B_2
-\frac{b-2}{30b-59}(15E_1 + 10E_2 + 20E_3 +
6E_4\\
& & +12E_5+18E_6+24E_7+30E_8),
\end{array}
\end{equation}
$$K_S^2=\frac{2(15b-29)^2}{15(30b-59)},\quad |\det(R)|=30\cdot (30b-59),\quad
D=2^2(15b-29)^2.$$ We also compute the dual vectors,
\medskip

$E_1^*=-\frac{1}{30b-59}\{(15b-22)E_1+5E_2+10E_3+3E_4+6E_5+9E_6+12E_7+15E_8\},$

$E_2^*=-\frac{1}{30b-59}\{5E_1+(20b-36)E_2+(10b-13)E_3+2E_4+4E_5+6E_6+8E_7+10E_8\},$

$E_4^*=-\frac{1}{30b-59}\{3E_1+2E_2+4E_3+(24b-46)E_4+(18b-33)E_5+(12b-20)E_6+(6b-7)E_7+6E_8\}$.

\medskip\noindent
{\bf Claim 4.8.1.} Let $C$ be a $(-1)$-curve of the form
\eqref{C}. Suppose that $C$ meets $f^{-1}(p_4)$. Then it satisfies
one of the
 following three cases: $$
\begin{array}{|c|c|c|c|c|c|c|c|c|c|c|c|}
\hline \textrm{Case} & CE_8 & CE_7 & CE_6 & CE_5 & CE_4 & CE_3 &
CE_2 & CE_1 & CB_2
& CB & k\\
\hline
({\rm a}) & 0 & 0 & 0 & 0 & 0 & 0 & 0 & 1 & 0 & 0 & -15\\
\hline
({\rm b}) & 0 & 0 & 0 & 0 & 0 & 0 & 1 & 0 & 0 & 1 & -10\\
\hline
({\rm c}) & 0 & 0 & 0 & 0 & 1 & 0 & 0 & 0 & 1 & 0 & -6\\
\hline
\end{array}
$$

\begin{proof}
Since $(f^*K_S)C=\frac{(15b-29)k}{15(30b-59)}<0$, $k<0$.
Intersecting $C$ with \eqref{ks'8} we get\\ $(30b-59)C(5B+9B_2 ) +
15(b-2)C\{15E_1 + 10E_2 + 20E_3 +
6E_4+12E_5+18E_6+24E_7+30E_8\}=(15b-29)k+15(30b-59) <15(30b-59).$\\
This is possible only if $C$ satisfies one of the three cases, or
the case

\medskip\noindent $({\rm d}) \,\,\, CB_2 = CB =1, b = 2,
k = -1,\,\,\, (CE_i\,\,\,{\rm are}\,\,\,{\rm not}\,\,\,{\rm
determined}).$

\medskip\noindent In case (d), $C(2) =
-\frac{1}{3}B$ and $C(3) = -\frac{1}{5}B_2$, thus

$C(1)^2 + C(4)^2 = C^2-C(2)^2-C(3)^2-(\frac{-1}{2}f^*K_S)^2 =
-\frac{1}{2}.$\\ Also note that in this case the sublattice
$R_{p_4}\subset H^2(S', \mathbb{Z})$ generated by the components
of $f^{-1}(p_4)$ is a negative definite unimodular lattice of rank
8. In particular, $R_{p_4}^*=R_{p_4}$, so $C(4)\in R_{p_4}$ and
$C(4)^2$ is a non-positive even integer. By Lemma \ref{lem4.1},
$C(4)^2 = 0$. Thus $C$ does not meet $f^{-1}(p_4)$, contradicts
the assumption.
\end{proof}

\medskip\noindent
{\bf Claim 4.8.2.} Let $C$ be a $(-1)$-curve of the form
\eqref{C}. Suppose that $C$ meets $f^{-1}(p_4)$. Then $C$ meets
only one component of $f^{-1}(p_1)\cup f^{-1}(p_2)\cup
f^{-1}(p_3)$, the intersection multiplicity with the component is
$1$, and the component is
\begin{enumerate}
\item  the component of $f^{-1}(p_1)$, if $C$ satisfies {\rm (a)},
\item the component $B$  of $f^{-1}(p_2)$, if $C$ satisfies {\rm (b)},
\item the component $B_2$ of $f^{-1}(p_3)$, if $C$ satisfies {\rm (c)}.
\end{enumerate}

\begin{proof}
Assume that $C$ satisfies {\rm (a)}. Then, $C(2)=C(3)=0$ and
$C(4)=E_1^*$, so $C(4)^2=-\frac{15b-22}{30b-59}$, hence $C(1)^2
=C^2-C(4)^2-(\frac{-15}{2(15b-29)}f^*K_S)^2=-\frac{1}{2}.$

 Assume that $C$ satisfies {\rm (b)}.  Then, $C(2)=-\frac{1}{3}B, C(3) = 0$ and
$C(4)=E_2^*$, so
$C(1)^2=-1+\frac{1}{3}+\frac{20b-36}{30b-59}-(\frac{-10}{2(15b-29)}f^*K_S)^2=0.$
Hence $C(1)=0$.

 Assume that $C$ satisfies {\rm (c)}.  Then, $C(2) =
0, C(3)=-\frac{1}{5}B_2$ and $C(4)=E_4^*$, so
$C(1)^2=-1+\frac{1}{5}+\frac{24b-46}{30b-59}-(\frac{-6}{2(15b-29)}f^*K_S)^2=0.$
Hence $C(1)=0$.
\end{proof}

\medskip\noindent
Similarly, we see that there are three, mutually disjoint,
$(-1)$-curves $C_1, C_2, C_3$ satisfying {\rm (a)}, {\rm (b)},
{\rm (c)} from Claim 4.8.1, respectively.

\bigskip
{\bf Acknowledgements.} We thank J\'anos Koll\'ar for many useful
comments. We are grateful to the referee for helpful suggestions
which have led to improvements in the presentation of the
manuscript.



\begin{thebibliography}{99}

\bibitem[Br]{Brieskorn} E. Brieskorn, \textit{Rationale Singularit\"aten komplexer Fl\"achen},
 Invent. math. \textbf{4} (1968), 336-358.

\bibitem[FS85]{FS85} R. Fintushel and R. Stern, \textit{Pseudofree
orbifolds}, Ann. of Math. (2) \textbf{122} (1985), no. 2, 335--364.

\bibitem[FS87]{FS87} R. Fintushel and R. Stern,  \textit{O(2) actions on the 5-sphere}, Invent. Math. \textbf{87} (1987), no. 3, 457--476.


\bibitem[HK]{HK} D. Hwang and J. Keum, \textit{The maximum number of singular points on rational homology projective planes}, arXiv:0801.3021, to appear in J. Algebraic Geometry.

\bibitem[HK2]{HK2} D. Hwang and J. Keum, \textit{Algebraic Montgomery-Yang Problem: the non-rational case and the del Pezzo case}, arXiv:0906.0633.

\bibitem[Keu07]{Keum} J. Keum, \textit{A rationality criterion for projective surfaces - partial solution to Kollar's conjecture},
Algebraic geometry, 75-87, Contemp. Math., \textbf{422}, Amer. Math.
Soc., Providence, RI, 2007.

\bibitem[Keu08]{K08} J. Keum, \textit{Quotients of Fake Projective Planes}, Geom. \& Top. \textbf{12}
(2008), 2497--2515.

\bibitem[Keu10]{K10} J. Keum, \textit{The moduli space of $\mathbb{Q}$-homology projective planes with 5 quotient singular
points}, Acta Math. Vietnamica, \textbf{35} (2010), 79-89.


\bibitem[KM]{KM} S. Keel and J. McKernan, \textit{Rational curves on quasi-projective surfaces}, Mem. Amer. Math. Soc. \textbf{140} (1999), no. 669

\bibitem[KNS]{KNS} R. Kobayashi, S. Nakamura, and F. Sakai \textit{A
numerical characterization of ball quotients for normal surfaces
with branch loci}, Proc. Japan Acad. Ser. A,  Math. Sci.
\textbf{65} (1989), no. 7, 238-241

\bibitem[Kol05]{Kol05} J. Koll\'ar, \textit{Einstein metrics on $5$--dimensional Seifert bundles}, Jour. Geom. Anal.
\textbf{15} (2005), no. 3, 463-495.

\bibitem[Kol08]{Kol08} J. Koll\'ar, \textit{Is there a topological Bogomolov-Miyaoka-Yau inequality?}, Pure Appl. Math. Q. (Fedor Bogomolov special issue, part I), \textbf{4} (2008), no. 2,  203--236.


\bibitem[Me]{Megyesi} G. Megyesi, \textit{Generalisation of the Bogomolov-Miyaoka-Yau inequality to singular surfaces},
Proc. London Math. Soc. (3) \textbf{78} (1999), 241-282.


\bibitem[Mi]{Miyaoka} Y. Miyaoka, \textit{The maximal number of quotient singularities on surfaces with given numerical invariants},
Math, Ann, \textbf{268} (1984), 159-171.



\bibitem[MY]{MY} D. Montgomery and C. T. Yang, \textit{Differentiable pseudo-free circle
actions on homotopy seven spheres}, Proc. of the Second Conference
on Compact Transformation Groups (Univ. Massachusetts, Amherst,
Mass., 1971), Part I, Springer, 1972, pp. 41-101. Lecture Notes in
Math., Vol. 298.

\bibitem[P]{Pet} T. Petrie, \textit{Equivariant quasi-equivalence, transversality, and normal cobordism}, Proc. Int. Cong. Math., Vancouver, 1974, pp.537-541


\bibitem[S]{Sakai} F. Sakai, \textit{Semistable curves on algebraic surfaces and logarithmic pluricanonical maps}, Math. Ann. \textbf{254} (1980), no. 2, 89-120

\bibitem[Se]{Sei} H. Seifert, \textit{Topologie dreidimensionaler gefaserter R\"aume}, Acta Math. \textbf{60} (1932), 147-238

\end{thebibliography}
\end{document}